\newcommand*\tasklabelformat[1]{#1)}
\numberwithin{equation}{section}
\newtheorem*{rep@theorem}{\rep@title}
\newcommand{\newreptheorem}[2]{%
\newenvironment{rep#1}[1]{%
 \def\rep@title{#2 \ref{##1}}%
 \begin{rep@theorem}}%
 {\end{rep@theorem}}}
\theoremstyle{theorem}
\newtheorem{thm}{Theorem}[section]
\newtheorem*{thm*}{Theorem}
\theoremstyle{definition}
\newtheorem{prop}[thm]{Proposition}
\newtheorem*{prop*}{Proposition}
\newtheorem{defn}[thm]{Definition}
\newtheorem{lem}[thm]{Lemma}
\newtheorem{cor}[thm]{Corollary}
\newtheorem*{cor*}{Corollary}
\theoremstyle{remark}
\newtheorem{rem}[thm]{Remark}
\title{\vspace*{-1.5cm} On the Localization of the Bergman Kernel
\\
and applications to Toeplitz theory}
\author
{Siarhei Finski
}
\date{}
\newcommand{\imun} {\sqrt{-1}}
\newcommand{\comp}{\mathbb{C}}
\newcommand{\real}{\mathbb{R}}
\newcommand{\nat}{\mathbb{N}}
\newcommand{\enmr}[1]{\text{End}{(#1)}}
\newcommand{\ccal}{\mathscr{C}}
\newcommand{\dbar}{ \overline{\partial} }
\newcommand{\tr}[1]{{\rm{Tr}} \big[ #1 \big]}
\renewcommand{\Im}{\operatorname{Im}}
\DeclareFontFamily{OMX}{MnSymbolE}{}
\DeclareSymbolFont{MnLargeSymbols}{OMX}{MnSymbolE}{m}{n}
\DeclareFontShape{OMX}{MnSymbolE}{m}{n}{
    <-6>  MnSymbolE5
   <6-7>  MnSymbolE6
   <7-8>  MnSymbolE7
   <8-9>  MnSymbolE8
   <9-10> MnSymbolE9
  <10-12> MnSymbolE10
  <12->   MnSymbolE12
}{}
\DeclareFontShape{OMX}{MnSymbolE}{b}{n}{
    <-6>  MnSymbolE-Bold5
   <6-7>  MnSymbolE-Bold6
   <7-8>  MnSymbolE-Bold7
   <8-9>  MnSymbolE-Bold8
   <9-10> MnSymbolE-Bold9
  <10-12> MnSymbolE-Bold10
  <12->   MnSymbolE-Bold12
}{}
\let\llangle\@undefined
\let\rrangle\@undefined
\DeclareMathDelimiter{\llangle}{\mathopen}%
                     {MnLargeSymbols}{'164}{MnLargeSymbols}{'164}
\DeclareMathDelimiter{\rrangle}{\mathclose}%
                     {MnLargeSymbols}{'171}{MnLargeSymbols}{'171}
\newenvironment{sciabstract}{}
\begin{document}

\maketitle 

\vspace*{-0.7cm}

\vspace*{0.3cm}

\begin{sciabstract}
  \textbf{Abstract.} 
	For a compact complex manifold endowed with a big line bundle and a Radon measure, we study the localization phenomena of the associated Bergman (or Christoffel-Darboux) kernel. 
	For Bernstein-Markov measures, this results in the determination of the limiting off-diagonal Bergman measure, thereby confirming a conjecture of Zelditch. 
	We then turn to applications in the theory of Toeplitz operators, showing in particular that they form an algebra under composition.
	Building on this, we then show that for Bernstein-Markov measures, the spectrum of Toeplitz operators equidistributes.
\end{sciabstract}

\pagestyle{fancy}
\lhead{}
\chead{On the Localization of the Bergman Kernel}
\rhead{\thepage}
\cfoot{}


\newcommand{\Addresses}{{
  \bigskip
  \footnotesize
  \noindent \textsc{Siarhei Finski, CNRS-CMLS, École Polytechnique F-91128 Palaiseau Cedex, France.}\par\nopagebreak
  \noindent  \textit{E-mail }: \texttt{finski.siarhei@gmail.com}.
}} 

\vspace*{0.25cm}

\par\noindent\rule{1.25em}{0.4pt} \textbf{Table of contents} \hrulefill

\vspace*{-1.5cm}

\tableofcontents

\vspace*{-0.2cm}

\noindent \hrulefill


\section{Introduction}\label{sect_intro}
	The goal of this paper is to study a localization phenomenon for the Bergman kernel and to apply this study to the theory of Toeplitz operators.
	\par 
	To state our results precisely, we fix a compact complex manifold $X$, $\dim_{\comp} X = n$, and a \textit{big line bundle} $L$ over $X$, i.e. such that for some $c > 0$, we have $\dim H^0(X, L^{\otimes k}) \geq c k^n$, for $k \in \nat$ big enough.  
	Recall that a subset $E \subset X$ is called \textit{pluripolar} if in each local chart we have $E \subset \{\phi = -\infty\}$ for some plurisubharmonic (\textit{psh}) function $\phi$.
	We fix a continuous Hermitian metric $h^L$ on $L$ and a Borel measure $\mu$ on $X$ which \textit{does not give full mass to pluripolar subsets}, i.e. such that for any pluripolar subset $E \subset X$, we have $\mu(E) \neq \mu(X)$.
	This condition is weaker than requiring $\mu$ to be \textit{non-pluripolar}, i.e. such that for any pluripolar subset $E \subset X$ we have $\mu(E) = 0$.
	We denote by ${\textrm{Hilb}}_k(h^L, \mu)$ the positive semi-definite form on $H^0(X, L^{\otimes k})$ defined for arbitrary $s_1, s_2 \in H^0(X, L^{\otimes k})$ as follows
	\begin{equation}\label{eq_defn_l2}
			\langle s_1, s_2 \rangle_{{\textrm{Hilb}}_k(h^L, \mu)} = \int \langle s_1(x), s_2(x) \rangle_{(h^L)^{k}} \cdot d \mu(x).
	\end{equation}
	Remark that since $\mu$ does not give full mass to analytic subsets (as every analytic subset is pluripolar), the above form (\ref{eq_defn_l2}) is positive definite.
	\par 
	We assume that $k$ is large enough so that $n_k > 0$, where $n_k := \dim H^0(X, L^{\otimes k})$.
	The central object of this article is the Bergman (or Christoffel-Darboux) kernel, \cite{Bergman}, defined as
	\begin{equation}\label{eq_bergm_kern}
		B_k(x, y) := \sum_{i = 1}^{n_k} s_i(x) \cdot s_i(y)^*  \in L^k_x \otimes (L^k_y)^*,
	\end{equation}
	where $s_i$, $i = 1, \ldots, n_k$, is an orthonormal basis of $(H^0(X, L^{\otimes k}), {\textrm{Hilb}}_k(h^L, \mu))$ (an easy verification shows that (\ref{eq_bergm_kern}) doesn't depend on the choice of the basis).
	\par 
	To grasp the intuitive meaning behind the Bergman kernel, we denote by $\Sigma_k$ the base loci of $L^k$, i.e. $\Sigma_k := \{ x \in X  :  s(x) = 0 \text{ for any } s \in H^0(X, L^{\otimes k}) \}$.
	For $x \in X \setminus \Sigma_k$, we denote by $s_{k, x} \in H^0(X, L^{\otimes k})$ the \textit{peak section} at $x$ with respect to the scalar product ${\textrm{Hilb}}_k(h^L, \mu)$; recall that this means that $s_{k, x}$ is the unique section (up to a unimodular complex multiplication) which maximizes the ratio $|s(x)|_{(h^L)^k} / \| s \|_{{\textrm{Hilb}}_k(h^L, \mu)}$ among all $s \in H^0(X, L^{\otimes k}) \setminus \{0\}$.
	\par 
	Directly from the definition of the Bergman kernel and the fact that it doesn't depend on the choice of an orthonormal basis, we deduce 
	\begin{equation}\label{eq_bk_id}
			B_k(x, y) = 
			\begin{cases} s_{k, x}(x) \cdot s_{k, x}(y)^*, \quad & x \in X \setminus \Sigma_k, y \in X,
			\\
			0, &\text{otherwise.}
			\end{cases}
	\end{equation}
	\par 
	The description of peak sections suggests that, as $k \to \infty$, the values $y \mapsto |s_{k, x}(y)|_{(h^L)^k}$ become increasingly concentrated near $y = x$ for any fixed $x \in X$.
	\par 
	For certain “regular” measures, the above heuristic holds in a strong form; see Remark \ref{rem_off_diag} for a brief survey.
	In general, however, naive versions of the phenomenon break down: one does not have pointwise convergence to zero, see Section \ref{sect_off_diag}, nor even weak convergence of the norm at individual points in the support of $\mu$, see Breuer-Last-Simon \cite[\S3-5]{NevaiCond}.
	The main result of this article shows, nevertheless, that the heuristic can be recovered in an \emph{averaged} form.
	\par 
	Before turning to the general formulation of our results, we illustrate a connection with orthogonal polynomials.
	Let $\mu$ be a Borel measure on $\comp$ so that $K := {\textrm{supp}}(\mu)$ is non-finite and compact. 
	Consider a sequence $p_i(z)$, $i \in \nat$, $z \in \comp$, of orthogonal polynomials associated with $\mu$, cf. \cite{SzegoBookOrt}. 
	Recall that it means that each $p_i$ is a polynomial of degree $i$ in $z$, normalized to have unit norm in $L^2(\mu)$, and orthogonal to all $p_j$ with $j < i$.
	The non-finiteness of the support of $\mu$ assures that the scalar product (\ref{eq_defn_l2}) is positive definite and hence $p_j$ are well-defined (up to a multiplication by a unimodular constant).
	The associated Christoffel-Darboux kernel, \cite{Christoffel}, \cite{Darboux}, is defined by
	\begin{equation}\label{eq_cd_orth}
		B_k^{{\rm{ort}}}(x, y) := \sum_{i = 0}^{k} p_i(x) \cdot \overline{p_i(y)}, \qquad x, y \in \comp.
	\end{equation}
	To realize this Christoffel-Darboux kernel as a special case of the Bergman kernel, we consider the projective space $X := \mathbb{P}^1$ and the hyperplane bundle $L = \mathscr{O}(1)$. 
	We view $\mathbb{C}$ as an affine chart in $\mathbb{P}^1 = \mathbb{C} \cup \{\infty\}$ with the standard holomorphic coordinate $z$, and consider a non-zero section $\sigma \in H^0(X, L)$ vanishing at $\infty$. 
	The division by $\sigma^{k}$ gives an isomorphism between $H^0(X, L^{\otimes k})$ and the space of polynomials of degree $\leq k$ in $z$. 
	If we now endow $L$ with a continuous metric $h^L$, such that $|\sigma(x)|_{h^L} = 1$ for all $x \in K$, then the $L^2$-norm (\ref{eq_defn_l2}) coincides, under the above identification, with the $L^2(\mu)$-norm on the space of polynomials. 
	In particular, (\ref{eq_bergm_kern}) then specializes to (\ref{eq_cd_orth}).
	\par 
	We now come back to our general setting of a complex manifold and a big line bundle.
	We introduce the following sequence of measures on $X \times X$:
	\begin{equation}\label{eq_mu_begmm}
		\mu_k^{\rm{Berg}} := \frac{1}{n_k} |B_k(x, y)|^{2}_{(h^L)^k} \cdot d\mu(x) d\mu(y), \quad k \in \nat.
	\end{equation}
	It follows immediately from (\ref{eq_bergm_kern}) that each $\mu_k^{\rm{Berg}}$ is a \textit{probability measure}.
	The following result confirms the heuristics explained after (\ref{eq_bk_id}), and it is the main result of this paper.
	\par 
	\begin{thm}\label{thm_off_diag}
		For any Borel measure $\mu$ which does not give full mass to pluripolar subsets, the corresponding measures $\mu_k^{\rm{Berg}}$ do not asymptotically place mass away from the diagonal.
		In other words, for any compact subset $K \subset X \times X$, not intersecting the diagonal, we have
		\begin{equation}\label{eq_thm_off_diag}
			\lim_{k \to \infty} \int_{K} \mu_k^{\rm{Berg}} = 0.
 		\end{equation}
	\end{thm}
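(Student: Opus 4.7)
The plan is to reduce the diagonal-localization statement to a trace estimate on Toeplitz operators, and then establish that estimate by smooth approximation of the data together with a stability argument resting on pluripotential theory.

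First I would argue that it suffices to show, for every Lipschitz function $f : X \to [0,1]$,
\begin{equation*}
	\frac{1}{n_k} \iint (f(x) - f(y))^2 \, |B_k(x,y)|^2_{(h^L)^k} \, d\mu(x) d\mu(y) \to 0 \quad \text{as } k \to \infty.
\end{equation*}
Indeed, any compact $K \subset X \times X$ disjoint from the diagonal can be covered by finitely many products $\overline{U}_i \times \overline{V}_i$ of open sets with $\overline{U}_i \cap \overline{V}_i = \emptyset$; choosing Lipschitz $f_i$ equal to $0$ on $\overline{U}_i$ and $1$ on $\overline{V}_i$, one gets $\mathbf{1}_K(x,y) \leq \sum_i (f_i(x) - f_i(y))^2$ pointwise. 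Denoting by $P_k$ the orthogonal projection $L^2(X, \mu, (h^L)^k) \to H^0(X, L^{\otimes k})$, and by $T_k(f) := P_k M_f P_k$ the Toeplitz operator, the reproducing property $\int |B_k(x,y)|^2 d\mu(y) = B_k(x,x)$ rewrites the displayed integral as $\tfrac{2}{n_k}[\operatorname{Tr}(T_k(f^2)) - \operatorname{Tr}(T_k(f)^2)]$, and a further algebraic manipulation identifies this trace defect with the non-negative Hilbert--Schmidt norm $\tfrac{2}{n_k}\|(I - P_k) M_f P_k\|_{\mathrm{HS}}^2$. Thus the theorem reduces to the trace form of the asymptotic multiplicativity of Toeplitz operators: $\operatorname{Tr}(T_k(f^2) - T_k(f)^2) = o(n_k)$.

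To prove this trace estimate, I would proceed by smooth approximation. For smooth positive data (smooth $h^L$ with strictly positive curvature and smooth positive volume form $\mu$), the classical Berezin--Toeplitz calculus yields $T_k(f^2) - T_k(f)^2 = O(k^{-1})$ in operator norm, hence $\operatorname{Tr}(T_k(f^2) - T_k(f)^2) = O(n_k/k)$. In the general case, one approximates $h^L$ by smooth hermitian metrics $h^L_\epsilon$ and $\mu$ by smooth positive volume forms $\mu_\epsilon$, applies the smooth estimate to the approximant Bergman kernels $B_k^\epsilon$, and controls the discrepancy between $|B_k|^2 d\mu d\mu$ and $|B_k^\epsilon|^2 d\mu_\epsilon d\mu_\epsilon$ via continuity of Bergman kernel asymptotics in the data. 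The non-pluripolar hypothesis on $\mu$ enters through the equilibrium weight $\phi_{\mathrm{eq}}$: for continuous weights on a big line bundle and non-pluripolar measures, $\phi_{\mathrm{eq}}$ varies continuously with the data, and this translates into stability of the normalized Bergman kernels under approximation.

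The principal obstacle is the calibration between the approximation scale $\epsilon$ and the asymptotic parameter $k$: although $h^L_\epsilon \to h^L$ in $C^0$ and $\mu_\epsilon \to \mu$ weakly, the $k$-dependence of $B_k$ can amplify small perturbations of the data, and one must choose $\epsilon = \epsilon(k) \to 0$ slowly enough that both the smooth-case contribution $O(n_k/k)$ and the perturbation error add up to $o(n_k)$. This requires effective Bergman kernel stability bounds, which in turn rely on pluripotential-theoretic machinery (Monge--Amp\`ere inequalities for continuous weights, Demailly's approximation of psh weights, and a Bernstein--Markov type comparison between $L^2(\mu)$ and smooth $L^2$-norms on $H^0(X, L^{\otimes k})$) that the author has developed in earlier work on Bergman measures.
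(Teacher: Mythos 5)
Your opening reduction is correct and is in fact close in spirit to the paper's endgame: writing $\mathbf{1}_K \leq \sum_i (f_i(x)-f_i(y))^2$ and identifying $\tfrac{1}{n_k}\iint (f(x)-f(y))^2|B_k(x,y)|^2\,d\mu\,d\mu$ with $\tfrac{2}{n_k}\operatorname{Tr}\big[T_k(f^2)-T_k(f)^2\big] = \tfrac{2}{n_k}\|(I-P_k)M_fP_k\|_{\mathrm{HS}}^2$ is a valid equivalence. But the second half of your argument, where the actual work lies, has a genuine gap. For a general non-pluripolar $\mu$ (say, arc-length on a curve, or a measure on a Cantor-type set), there is no useful stability of $B_k$ under replacing $\mu$ by a smooth volume form $\mu_\epsilon$: weak convergence $\mu_\epsilon \to \mu$ gives no control on the ratio of the norms $\mathrm{Hilb}_k(h^L,\mu)$ and $\mathrm{Hilb}_k(h^L,\mu_\epsilon)$, and these can differ by factors growing exponentially in $k$. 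The ``Bernstein--Markov type comparison between $L^2(\mu)$ and smooth $L^2$-norms'' you invoke is exactly the hypothesis this paper is written to dispense with; the paper stresses that without Bernstein--Markov even the equidistribution of the diagonal measures $\mu_k^{\rm Berg,\Delta}$ is unavailable, and continuity of the equilibrium weight only controls $\tfrac1k\log B_k(x,x)$, not the measures $\tfrac{1}{n_k}|B_k|^2 d\mu\,d\mu$. So your calibration of $\epsilon(k)$ cannot be carried out; you have restated the difficulty rather than resolved it. Note also that the trivial bound $\|(I-P_k)M_fP_k\|_{\mathrm{HS}}^2 \leq \|f\|_\infty^2\, n_k$ is exactly what must be beaten, and nothing in the smooth-approximation scheme produces the gain.

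What the paper does instead is replace your Lipschitz $f$ by a \emph{meromorphic} symbol $f = s_1/s_2$ with $s_1,s_2 \in H^0(X,L^{\otimes k_0})$. For a measure supported away from $\mathrm{div}(s_2)$, the analogous operator $S_k$ (with kernel $\int B_k(x,z)(f(x)-f(z))B_k(z,y)\,d\mu(z)$) is uniformly bounded \emph{and} vanishes on the image of $H^0(X,L^{\otimes(k-k_0)})$ under multiplication by $s_2$; hence its rank is at most $n_k - n_{k-k_0}$, which is $o(n_k)$ by Fujita's theorem. This rank bound, unavailable for a general Lipschitz $f$, is the source of the gain. The reduction to measures supported away from a subvariety is then handled by an entirely separate pluripotential-theoretic argument: partial Bergman kernels and psh envelopes show that the diagonal measures $\mu_k^{\rm Berg,\Delta}$ put asymptotically no mass near any subvariety, and Lubinsky's inequality transfers this into stability of the off-diagonal kernel under truncation of $\mu$ near that subvariety. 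You would need both of these ingredients to complete your argument.
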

	\begin{rem}\label{rem_off_diag}
		There is a large body of work which establishes versions of Theorem \ref{thm_off_diag} for some specific measures. A comprehensive survey would take us too far afield, so we merely cite a few representative results. 
		In the complex geometric setting -- when the measure arises from a volume form -- see the works of Dai-Liu-Ma \cite{DaiLiuMa}, Christ \cite{ChristBergmanOff}, Ma-Marinescu \cite{MaMarOffDiag}, Berman \cite{BermanEnvProj}.
		For measures supported on totally real algebraic submanifolds, see Berman-Ortega-Cerdá \cite{BermanOrtega}.
		In the context of orthogonal polynomials, we refer to Nevai \cite{Nevai}, Lubinsky \cite{LubinskyUnivers}, Breuer-Last-Simon \cite{NevaiCond}.
		\par 
		We note that while Theorem \ref{thm_off_diag} appears to be new even in the context of orthogonal polynomials, the proof of it simplifies considerably in this special case (only Section \ref{sect_supp} is relevant), and in fact a stronger statement holds under weaker assumptions: according to Remark \ref{rem_fuj}, for arbitrary compactly supported measures $\mu$ on $\comp$ with \textit{non-finite support} and a compact $K \subset \comp \times \comp$, not intersecting the diagonal, there is $C > 0$, so that in the notations of (\ref{eq_cd_orth}), for any $k \in \nat$, we have
		\begin{equation}\label{eq_rem_fuj}
			\iint_{K} \big|B_k^{{\rm{ort}}}(x, y)\big|^2 \cdot d \mu(x)  d \mu(y)
			\leq 
			C,
		\end{equation}
		which of course refines (\ref{eq_thm_off_diag}), as $n_k = k + 1$.
	\end{rem}
	\par 
	In the setting of the Bernstein-Markov measures, the above theorem can be made more precise.
	To explain this, let us recall some notations first.
	We say that $\mu$ is \textit{Bernstein-Markov} with respect to $h^L$, if for each $\epsilon > 0$, there is $C > 0$, such that for any $k \in \nat$, we have
	\begin{equation}\label{eq_bm_clas}
		{\textrm{Ban}}_k^{\infty}(K, h^L)
		\leq
		C
		\cdot
		\exp(\epsilon k)
		\cdot
		{\textrm{Hilb}}_k(h^L, \mu), \quad \text{where} \quad K = {\textrm{supp}} \mu,
	\end{equation}
	and ${\textrm{Ban}}_k^{\infty}(K, h^L)$ stands for the $L^{\infty}(K)$-norm on $H^0(X, L^{\otimes k})$ induced by $h^L$. 
	For a survey on the Bernstein-Markov property, we recommend \cite{BernsteinMarkovSurvey}.  
	Here, we only mention that the Lebesgue measure on a smoothly bounded domain in $X$, or on a totally real compact submanifold of real dimension $n$, is Bernstein-Markov, see \cite[Corollary 1.8]{BerBoucNys}, \cite[Proposition 3.6]{BernsteinMarkovSurvey} and \cite[Theorem 1.3]{MarinescuVu}, and also the recent survey \cite{MarinescuVuSurvey}.
	\par 
	Now, we fix a non-pluripolar subset $E \subset X$. 
	Following Siciak \cite{SiciakExtremal}, Guedj-Zeriahi \cite{GuedZeriGeomAnal}, we define the \textit{psh envelope} $h^L_{E}$ associated with $E$ as
	\begin{equation}\label{defn_env}
		h^L_E
		=
		\inf \Big\{
			h^L_0 \text{ with psh potential }: h^L_0 \geq h^L \text{ over } E
		\Big\}.
	\end{equation}
	When $L$ is ample, the non-pluripolarity of $E$ assures that the metric $h^L_E$ has a bounded potential, cf. \cite[Theorem 9.17]{GuedZeriGeomAnal}.
	When $L$ is merely big, the potential of $h^L_E$ might have singularities.
	In either case, the lower semi-continuous regularization $h^L_{E *}$ of $h^L_E$ is a metric with a psh potential, and it has the minimal singularities, cf. \cite[\S 1.2]{BermanBouckBalls} for the necessary definitions.
	\par 
	Recall that the equilibrium measure associated with a non-pluripolar $(K, h^L)$ is defined as
	\begin{equation}\label{eq_equil_meas}
		\mu_{\mathrm{eq}}(K, h^L) := \frac{1}{{\rm{vol}}(L)} c_1(L, h^L_{K *})^n.
	\end{equation}
	Above, ${\rm vol}(L)$ denotes the volume of the line bundle $L$, defined by
	\begin{equation}
		{\rm{vol}}(L) := \lim_{k \to \infty} \frac{n!}{k^n} \dim H^0(X, L^{\otimes k}),
	\end{equation}
	a limit that exists by Fujita’s theorem \cite{Fujita} and is strictly positive under our assumption that $L$ is big. 
	The positive $(1,1)$-current $c_1(L, h^L_{K *})$ is defined as $ c_1(L, h^L_{K *}) := c_1(L, h^L_0) + \frac{\imun}{2 \pi} \partial \bar{\partial} \phi_K$, where $h^L_{K *} = h^L_0 \cdot \exp(-\phi_K)$ and $h^L_0$ is an arbitrary smooth metric on $L$. 
	It is generally non-smooth by the above discussion.
	When $L$ is ample, the top wedge power $c_1(L, h^L_{K*})^n$ can be defined in the sense of Bedford-Taylor \cite{BedfordTaylor}, since $h^L_{K*}$ admits a bounded potential. 
	However, as we assume $L$ is merely big, the wedge product $c_1(L, h^L_{K*})^n$ appearing in (\ref{eq_equil_meas}) is interpreted via the non-pluripolar product developed by Boucksom-Eyssidieux-Guedj-Zeriahi \cite{BEGZ}, which generalizes the Bedford-Taylor construction. 
	In this case, $\mu_{\mathrm{eq}}(K, h^L)$ is a probability measure on $X$ supported in $K$, see \cite[Proposition 1.10]{BermanBouckBalls}.
	The following result settles a question left open by Zelditch \cite[after Theorem 2.5]{ZelditchErgodSurvey} and provides a refinement of Theorem \ref{thm_off_diag}, albeit under an additional assumption.
	\begin{thm}\label{thm_off_diag_bm}
		For any Bernstein-Markov measure $\mu$ with a non-pluripolar support, the measures $\mu_k^{\rm{Berg}}$ converge weakly, as $k \to \infty$, to the measure $\Delta_* \mu_{\mathrm{eq}}(K, h^L)$, where $\Delta: X \to X \times X$ is the diagonal embedding.
	\end{thm}
	\begin{rem}
		As we shall explain in Section \ref{sect_off_diag}, Theorem \ref{thm_off_diag_bm} refines the convergence of the diagonal Bergman measures, due to Berman-Boucksom-Witt Nyström \cite{BerBoucNys} and Bloom-Levenberg \cite{BloomLeven1}, \cite{BloomLeven2}.
		We, however, stress out that our proof depends on \cite{BerBoucNys}, and the only novel aspect of Theorem \ref{thm_off_diag_bm} is the proof of the asymptotic concentration of the mass of $\mu_k^{\mathrm{Berg}}$ along the diagonal, i.e. Theorem \ref{thm_off_diag}.
		We also underline that Theorem \ref{thm_off_diag} can be established rather easily for measures satisfying the Bernstein-Markov assumption, see Section \ref{sect_off_diag}.
	\end{rem}
	\par
	We state now a complementary form of the localization principle, phrased in terms of the \textit{diagonal} Bergman measure, which is a probability measure on $X$ defined as
	\begin{equation}\label{eq_diag_bm}
		\mu_k^{\rm{Berg}, \Delta}
		:=
		\frac{1}{n_k} B_k(x, x) \cdot d \mu(x).
	\end{equation}
	The same heuristic discussed after (\ref{eq_bk_id}) suggests that although the diagonal Bergman kernel is a global invariant of the measure, variations in the weight of the measure should affect the asymptotic value of the diagonal Bergman kernel at a given point only through the local change of the weight at that point, rather than globally. This observation is formalized in our next theorem, for which we fix two Borel measures $\mu_1$ and $\mu_2$ on $X$, so that for a certain continuous $f : X \to \real$, $\mu_2 = \exp(f) \cdot \mu_1$.
	We denote by $\mu_{k, i}^{\rm{Berg}, \Delta}$, $k \in \nat$, $i = 1, 2$, the associated diagonal Bergman measures.
	\begin{thm}\label{thm_diag_local}
		For $\mu_1$ and $\mu_2$ as above, the sequence of measures $\mu_{k, 1}^{\rm{Berg}, \Delta} - \mu_{k, 2}^{\rm{Berg}, \Delta}$ converges weakly to the zero measure, as $k \to \infty$, if $\mu_1$ or $\mu_2$ do not give full mass to pluripolar subsets.
	\end{thm}
	\begin{rem}
		It is natural to ask whether Theorem \ref{thm_diag_local} admits a proof that does not rely on Theorem \ref{thm_off_diag}. 
		If so, Theorem \ref{thm_off_diag} could in turn be derived by the methods of Section \ref{sect_off_diag}.
	\end{rem}
	\par 
	Our primary motivation for the study of the localization phenomena of the Bergman kernel lies in the theory of Toeplitz operators.
	Let us recall the necessary notations: for $f \in \ccal^0(X, \real)$, $k \in \nat^*$, we define the Toeplitz operator $T_k(f) \in {\enmr{H^0(X, L^{\otimes k})}}$ as $T_k(f) := B_k \circ M_k(f)$, where $B_k : \ccal^0(X, L^{\otimes k}) \to H^0(X, L^{\otimes k})$ is the orthogonal (Bergman) projection to $H^0(X, L^{\otimes k})$ with respect to (\ref{eq_defn_l2}), and $M_k(f) : H^0(X, L^{\otimes k}) \to \ccal^0 (X, L^{\otimes k})$ is the multiplication map by $f$.
	\par 
	By considering the products $\langle T_k(f) s_1, s_2 \rangle_{{\textrm{Hilb}}_k(h^L, \mu)}$ for arbitrary $s_1, s_2 \in H^0(X, L^{\otimes k})$, we see that $T_k(f)$ depends solely on the restriction of $f$ to $K$.
	As by Tietze-Urysohn-Brouwer extension theorem, any function from $\ccal^0(K, \real)$ admits an extension to $\ccal^0(X, \real)$, we can thus extend the definition of $T_k(f)$ for any $f \in \ccal^0(K, \real)$.
	\par 
	One of the key features of Toeplitz operators is their asymptotic algebra property: for $f, g \in \ccal^0(K, \real)$, the composition $T_k(f) \circ T_k(g)$ is asymptotically close to $T_k(f \cdot g)$, as $k \to \infty$. 
	This phenomenon appears to have been first observed by Grenander-Szeg{\H{o}} in \cite[\S 7.4]{GrenanSzego} in the context of certain orthogonal polynomials, see Section \ref{sect_mat}. 
	It was later established by Bordemann-Meinrenken-Schlichenmaier \cite{BordMeinSchli} when $\mu$ is a volume form, $L$ is ample and $h^L$ is smooth and positive; see also Ma-Marinescu \cite{MaMarToepl}, \cite{MaMarBTKah}. 
	The following result shows that, somewhat unexpectedly, this phenomenon holds in great generality.
	\begin{thm}\label{cor_alg}	
		For any Borel measure $\mu$ which does not give full mass to pluripolar subsets, the space of Toeplitz operators verifies the asymptotic algebra property.
		In other words, for any $f, g \in \ccal^0(K, \real)$, $p \in [1, +\infty[$, we have
		\begin{equation}\label{eq_toepl_comp_as}
			\lim_{k \to \infty} \| T_k(f) \circ T_k(g) - T_k(f \cdot g) \|_p = 0,
		\end{equation}
		where $\| \cdot \|_p$ is the $p$-Schatten norm, defined for an operator $A \in {\enmr{V}}$, of a finitely-dimensional Hermitian vector space $(V, H)$ as $\| A \|_p = (\frac{1}{\dim V} {\rm{Tr}}[|A|^p])^{\frac{1}{p}}$, $|A| := (A A^*)^{\frac{1}{2}}$.
	\end{thm}
	\begin{rem}
		Apart from the case when $\mu$ is a volume form, $L$ is ample and $h^L$ is smooth and positive, the author is not aware of any general condition on $\mu$ for which the $p$-Schatten norm above could be replaced by the operator norm. 
		Due to possible connections to symplectic geometry in light of the foundational works \cite{BordMeinSchli} and Ma-Marinescu \cite{MaMarBTKah}, it would also be interesting to classify measures $\mu$, which allow for a refinement of (\ref{eq_toepl_comp_as}) to include the lower-order terms in $k$, and ultimately to analyze the commutator $\frac{1}{k} [T_k(f), T_k(g)]$.
	\end{rem}
	\par 
	As an application of Theorem \ref{cor_alg}, we get the following equidistribution result for the spectral measures of Toeplitz operators.
	\begin{thm}\label{thm_distr}
		For any Bernstein-Markov measure $\mu$ with a non-pluripolar support, and any $f \in \ccal^0(K, \real)$, for and any continuous $g: \real \to \real$, we have
		\begin{equation}
			\lim_{k \to \infty} \frac{1}{n_k} \sum_{\lambda \in {\rm{Spec}} (T_k(f))} g(\lambda)
			=
			\int_X g(f(x)) d \mu_{\mathrm{eq}}(K, h^L)(x).
		\end{equation}
	\end{thm}
	\begin{rem}
		When $X = \mathbb{P}^1$, $L = \mathscr{O}(1)$, and $\mu$ is the Lebesgue measure on the unit circle $\mathbb{S}^1 \subset \mathbb{C} \subset \mathbb{P}^1$, the result recovers Szeg\"o first limit theorem \cite{SzegoFST}, see Section \ref{sect_mat} for details.
		When $\mu$ is a volume form, $L$ is ample and $h^L$ is smooth and positive, Theorem \ref{thm_distr} was established by Boutet de Monvel-Guillemin \cite{BoutGuillSpecToepl}. 
		See also Berman \cite[Theorem 2.7]{BermanSuperToepl} for a further result treating the semi-positive metric $h^L$.
	\end{rem}
	\par 
	The structure of this article is as follows.
	In Section \ref{sect_local_main}, we establish Theorem \ref{thm_off_diag_bm}.
	We also prove Theorem \ref{thm_off_diag}, subject to certain auxiliary results that will be derived later in Section \ref{sect_cd_diag}, where we also complete the proof of Theorem \ref{thm_diag_local}.
	Finally, Section \ref{sect_appl} is devoted to applications, including the proofs of Theorems \ref{cor_alg} and \ref{thm_distr}.
	\par
	\textbf{Acknowledgement.}
	The author thanks Norm Levenberg for valuable discussions and insightful comments on an earlier version of this manuscript, which led to substantial improvements in the first draft.
	He also thanks Robert Berman for drawing attention to \cite{BermanOrtega}, Jonathan Breuer for sharing some details concerning \cite{NevaiCond}, Franck Wielonsky for allowing to reproduce his example in Section \ref{sect_bergm_pp} and the anonymous referee for the careful reading. This work was supported by the CNRS, École Polytechnique, and in part by the ANR projects QCM (ANR-23-CE40-0021-01), AdAnAr (ANR-24-CE40-6184), STENTOR (ANR-24-CE40-5905-01) and CanQuantFilt (ANR-25-ERCS-0009).
	\par
	\textbf{Conflict of Interest and Data Availability Statements.}
	The author declares that there is no conflict of interest regarding the publication of this manuscript.
	This article does not involve the generation or analysis of datasets. All results are theoretical, and no datasets were generated or analyzed during the current study.
	
	\section{Localization along the diagonal of the Bergman measures}\label{sect_local_main}
	The main objective of this section is to analyze the off-diagonal behavior of the Bergman measures.
	We begin in Section \ref{sect_exampl} with several explicit computations, which serve to highlight the sharpness of Theorem \ref{thm_off_diag}.
	The remainder of the section is devoted to the proof of Theorem \ref{thm_off_diag}. 
	In Section \ref{sect_off_diag}, we prove the theorem under the additional assumption that the measure $\mu$ satisfies the Bernstein-Markov property, thereby establishing Theorem \ref{thm_off_diag_bm} as a consequence.
	The general case is shown in Sections \ref{sect_local} and \ref{sect_supp} modulo a certain statement established later in Section \ref{sect_cd_diag}.
	
	\subsection{Case studies of off-diagonal behavior}\label{sect_exampl}
	The main goal of this section is to discuss to which extent Theorem \ref{thm_off_diag} is sharp by studying explicitly  some examples of the off-diagonal behavior of the Bergman measures.
	\par 
	The off-diagonal expansion of the Bergman kernel has been thoroughly investigated by Christ \cite{ChristBergmanOff} and Ma-Marinescu \cite{MaMarOffDiag} when $\mu$ is a volume form, $L$ is ample and $h^L$ is smooth and positive.
	Their results imply that in the notations of Theorem \ref{thm_off_diag}, there exists a constant $c > 0$, depending on $K$, such that $\int_{K} \mu_k^{\mathrm{Berg}} \leq \exp(-c \sqrt{k})$.
	This estimate hinges on the spectral gap property of the Kodaira Laplacian, a property which fails even for smooth semi-positive metrics $h^L$ on non-ample $L$, see Donnelly \cite{DonnellyGap}. 
	Alternatively, the exponential decay can also be obtained through Ohsawa-Takegoshi extension theorem, see \cite[Theorem 1.3]{BermanBulkUniv}.
	In the broader setting considered here, the two approaches do not seem to be applicable.
	Moreover, the following example shows that this exponential decay doesn't hold even for very “nice" measures.
	\par 
	\textbf{Example 1.}
	We embed $\mathbb{S}^1$ in $X := \mathbb{P}^1$ as one of the great circles (for concreteness given by $\theta \mapsto [1: \exp(i \theta)] \in \mathbb{P}^1$, $\theta \in [0, 2 \pi[$, where $[1 : z] \in \mathbb{P}^1$, $z \in \comp$, is a standard affine chart) and denote by $\mu$ the Lebesgue measure on $\mathbb{S}^1$, viewed as a measure on $X$.
	Remark that as $\mathbb{S}^1$ is totally real of maximal dimension, it is non-pluripolar, see \cite{SadullaevReal}, cf. \cite[Exercise 4.39.5]{GuedjZeriahBook}. 
	We then take $L := \mathscr{O}(1)$ endowed with the Fubini-Study metric $h^{FS}$.
	By the results recalled in the Introduction, the measure $\mu$ is Bernstein-Markov for $(\mathbb{S}^1, h^{FS})$. 
	\par 
	An easy calculation shows that with respect to the $L^2$-product associated with the above $\mu$, the standard basis of monomials of $H^0(X, L^{\otimes k})$ given by $z^i$, $i = 0, \ldots, k$, in the already mentioned affine chart, is orthonormal (if we normalize $\sigma$ from (\ref{eq_cd_orth}) in such a way that $|\sigma(x)|_{h^{FS}} = 1$ for any $x \in \mathbb{S}^1$).
	Hence, we conclude that for any $x, y \in \mathbb{S}^1$, we have
	\begin{equation}
		B_k(x, y) = \sum_{i = 0}^{k} x^i \cdot \overline{y}^i.
	\end{equation}
	Using this formula, the reader will first observe that $B_k(x, x) = k + 1$ for all $x \in \mathbb{S}^1$, $k \in \mathbb{N}$. 
	Also, it immediately follows that for even $k$ one has $|B_k(x, -x)| = 1$. 
	As a consequence, no exponential decay occurs.  
	Moreover, one checks that for non-intersecting non-empty intervals $K_1$ and $K_2$, we have $\int_{K_1 \times K_2} \mu_k^{\mathrm{Berg}} \sim c(K_1, K_2) \cdot k^{-1}$ for some $c(K_1, K_2) > 0$.
	\par 
	Our next example will indicate that the bigness assumption on $L$ is crucial for Theorem \ref{thm_off_diag}.
	\par 
	\textbf{Example 2.}
	Consider $X := X_1 \times X_2$, where $X_1$, $X_2$ are connected projective complex manifolds of positive dimensions, and let $L := \pi_2^* L_2$, where $L_2$ is an ample line bundle over $X_2$, and $\pi_2 : X \to X_2$ is the natural projection.
	We endow $L_2$ with a smooth Hermitian metric $h^L_2$ with positive curvature, and let $h^L := \pi_2^* h^L_2$.
	Endow $X$ with a volume form $\mu$, given by the wedge product of the pull-backs of volume forms $\mu_1$ on $X_1$ and $\mu_2$ on $X_2$.
	Then the Bergman kernel $B_k(x, y)$, $x , y \in X$, associated with $\mu$ and $h^L$ will relate to the Bergman kernel $B_{k, 2}(x, y)$, $x , y \in X_2$, associated with $\mu_2$ and $h^L_2$ as follows $B_k(x, y) = B_{k, 2}(\pi_2(x), \pi_2(y)) / \int_{X_1} d \mu_1$.
	If $K_{1, 1}, K_{1, 2} \subset X_1$ are two non-intersecting compact subsets of positive volume, then for the non-intersecting compact subsets $K_1 := K_{1, 1} \times X_2, K_2 := K_{1, 2} \times X_2$ of $X$, we get 
	\begin{multline}
		\iint_{K_1 \times K_2} |B_k(x, y)|^2_{(h^L)^k} \cdot d \mu(x) \cdot d \mu (y)
		\\
		=
	 	\frac{\int_{K_{1, 1}} d \mu_1}{\int_{X_1} d \mu_1}
	 	\cdot
	 	\frac{\int_{K_{1, 2}} d \mu_1}{\int_{X_1} d \mu_1}
	 	\cdot
	 	\iint_{X_2 \times X_2} |B_k(x, y)|^2_{(h^L_2)^k} \cdot d \mu_2 (x) \cdot d \mu_2 (y).
	\end{multline}
	And since $\iint_{X_2 \times X_2} |B_k(x, y)|^2_{(h^L_2)^k} d \mu_2 (x) \cdot d \mu_2 (y) = \dim H^0(X_2, L_2^{\otimes k})$, see (\ref{eq_push_forw}), the above identity shows that Theorem \ref{thm_off_diag} doesn't hold in the above setting.
	\par 
	In our final example we point out that Theorem \ref{thm_off_diag} cannot be improved by a pointwise estimate on the off-diagonal Bergman kernel.
\par 
	\textbf{Example 3.}
	Consider a projective complex manifold $X_1$ of positive dimension $n$, endowed with an ample line bundle $L_1$.
	We endow $L_1$ with a smooth Hermitian metric $h^L_1$ of positive curvature.
	Consider a blow-up $\pi : X \to X_1$ at a point $x_1 \in X_1$, and let $L := \pi^* L_1$, $h^L := \pi^* h^L_1$.
	Endow $X_1$ with a volume form $\mu_1$ and define $\mu := \pi^* \mu_1$.
	Then the Bergman kernel $B_k(x, y)$, $x , y \in X$, associated with $\mu$ and $h^L$ will relate to the Bergman kernel $B_{k, 1}(x, y)$, $x , y \in X_1$, associated with $\mu_1$ and $h^L_1$ as $B_k(x, y) = B_{k, 1}(\pi(x), \pi(y))$.
	We see in particular that for any $x, x' \in X$ so that $\pi(x), \pi(x') = x_1$, we have $|B_k(x, x')|_{(h^L)^k} = B_{k, 1}(x_1, x_1)$.
	But by the result of Tian \cite{TianBerg}, cf. discussion after Theorem \ref{thm_diag}, $B_{k, 1}(x_1, x_1) \sim k^n$, as $k \to \infty$, and so in Theorem \ref{thm_off_diag} one cannot replace the mass of the measure by the pointwise bound on the Bergman kernel.
	
	\subsection{Localization in the Bernstein-Markov setting}\label{sect_off_diag}
	The main objective of this section is to prove Theorem \ref{thm_off_diag}, under the additional hypothesis that the measure $\mu$ satisfies the Bernstein-Markov property. We will then deduce Theorem \ref{thm_off_diag_bm} from this.
	Clearly, it suffices to establish the following result.
	\begin{thm}\label{thm_nomass}
		For any Bernstein-Markov measure $\mu$ with a non-pluripolar support, and any compact subsets $K_1, K_2 \subset X$ verifying $K_1 \cap K_2 = \emptyset$, we have
		\begin{equation}
			\lim_{k \to \infty} \int_{K_1 \times K_2} \mu_k^{\rm{Berg}} = 0.
 		\end{equation}
	\end{thm}
	\par 
	Now, quite surprisingly, our main ingredient in the proof of Theorem \ref{thm_nomass} is based on the study of the Bergman kernel \textit{on the diagonal}.
	We rely more specifically on one of the main results of Berman-Boucksom-Witt Nyström \cite{BerBoucNys} (see also Bloom-Levenberg \cite{BloomLeven1,BloomLeven2} for earlier results in this direction), which also plays a key role in the proof of Theorem \ref{thm_distr}.
	\begin{thm}[{\cite[Theorem B]{BerBoucNys}}]\label{thm_diag}
		For any Bernstein-Markov measure $\mu$ with a non-pluripolar support, the sequence of measures $\frac{1}{n_k} B_k(x, x) \cdot d \mu(x)$ on $X$ converges weakly, as $k \to \infty$, to the equilibrium measure $\mu_{\mathrm{eq}}(K, h^L)$.
	\end{thm}
	\par 
	Remark that when $\mu$ is a volume form, $L$ is ample and $h^L$ is smooth and positive, much more precise asymptotic results on the Bergman kernel can be obtained.  
	In this setting, it is known that $B_k(x, x)$ admits a full asymptotic expansion in powers of $k$, and this expansion depends smoothly on all relevant parameters -- namely, the point $x \in X$, the metric on the manifold, the volume form, the complex structure, and so on.  
	This result was established in various degrees of generality by Tian \cite{TianBerg}, Catlin \cite{Caltin}, Bouche \cite{Bouche}, Zelditch \cite{ZeldBerg}, Dai-Liu-Ma \cite{DaiLiuMa} and Ma-Marinescu \cite{MaHol}.
	\par 
	Note, however, that such a strong pointwise asymptotic expansion is highly sensitive to the underlying assumptions.  
	For instance, already if $h^L$ is only semi-positive and not strictly positive, the nature of the expansion changes significantly, see Marinescu-Savale \cite{MarinSaval}.
	Remarkably, Theorem \ref{thm_diag} tells us that the Bergman measure is much more robust in this perspective.
	\par 
	Let us now explain the relation between Theorems \ref{thm_off_diag_bm} and \ref{thm_diag}.
	We denote by $\pi: X \times X \to X$ one of the two projections. 
	Then an easy verification (see (\ref{eq_bk_id})) shows
	\begin{equation}\label{eq_push_forw}
		\pi_* \mu_k^{\rm{Berg}}
		=
		\frac{1}{n_k} B_k(x, x) \cdot d \mu(x).
	\end{equation}
	Since pushforwards under continuous maps preserve weak convergence, we observe that Theorem \ref{thm_off_diag_bm} constitutes a refinement of Theorem \ref{thm_diag}.
	\par 
	\begin{proof}[Proof of Theorem \ref{thm_nomass}]
		Assume for contradiction that there exists $\epsilon > 0$ such that, up to passing to a subsequence, for all $k \in \nat$ big enough, we have 
		\begin{equation}\label{eq_contr}
    		\int_{K_1 \times K_2} \mu_k^{\mathrm{Berg}} \geq \epsilon.
		\end{equation}
		From Theorem \ref{thm_diag}, (\ref{eq_push_forw}) and (\ref{eq_contr}), we then immediately obtain
		\begin{equation}\label{eq_non_empt}
			\int_{K_1} \mu_{\mathrm{eq}}(K, h^L)
			\geq
			\epsilon.
		\end{equation}
 		\par 
		By a version of Urysohn's lemma, we pick two continuous functions $f, g : X \to [0, 1]$, with non-intersecting support, and such that $f|_{K_1} = 1$, $g|_{K_2} = 1$.
		We consider the measure 
		\begin{equation}
			\mu' := \mu \cdot \exp(- g).
		\end{equation}
		It follows directly from the definition that $\mu'$ is a Bernstein-Markov measure for $(K, h^L)$, since $\mu$ is, and so Theorem \ref{thm_diag} holds for $\mu := \mu'$.
		Hence, if we denote by $B_k'(x, x)$, $x \in X$, $k \in \nat$, the Bergman kernel associated with $\mu'$ and $h^L$, by Theorem \ref{thm_diag}, we conclude that 
		\begin{equation}\label{eq_asympt_bk_diag}
		\begin{aligned}
			&
			\lim_{k \to \infty}
			\frac{1}{n_k}
			\int_{X} f(x) \cdot B_k'(x, x) \cdot d \mu'(x)
			=
			\int_{X} f \cdot d \mu_{\mathrm{eq}}(K, h^L), 
			\\
			&
			\lim_{k \to \infty}
			\frac{1}{n_k}
			\int_{X} f(x) \cdot B_k(x, x) \cdot d \mu(x)
			=
			\int_{X} f \cdot d \mu_{\mathrm{eq}}(K, h^L).
		\end{aligned}
		\end{equation}
		From (\ref{eq_non_empt}), we deduce that
		\begin{equation}\label{eq_non_empt2}
			\int_{X} f \cdot d \mu_{\mathrm{eq}}(K, h^L)
			\geq
			\epsilon.
		\end{equation}
		\par 
		We will now obtain a contradiction with our initial assumption (\ref{eq_contr}).
		We assume $k_0 \in \nat$ is big enough so that $n_k > 0$ for any $k \geq k_0$.
		We denote by $\Sigma$ the union of base loci of $L^{k_0}, L^{k_0 + 1}, \ldots, L^{2 k_0}$.
		By definition of the base-loci and the fact that the space of holomorphic sections of tensor powers of $L$ form an algebra under the multiplication, for any $x \in X \setminus \Sigma$ and $k \geq k_0$, there is $s \in H^0(X, L^{\otimes k})$ so that $s(x) \neq 0$.
		Remark also that $\Sigma$ is an analytic subset, and since $\mu$ is Borel, the indicator function of $\Sigma$ is measurable.
		From this, we conclude that
		\begin{equation}\label{eq_int_sigm_disap}
			\int_{X \times X} |B_k(x, y)|^2 d \mu(x) d \mu(y) = \int_{X \setminus \Sigma}  |B_k(x, y)|^2 d \mu(x) d \mu(y).
		\end{equation}
		\par 
		Now, for any $x \in X \setminus \Sigma$, we denote by $s_{k, x} \in H^0(X, L^{\otimes k})$ (resp. $s'_{k, x} \in H^0(X, L^{\otimes k})$) the \textit{peak section} at $x$ with respect to the scalar product ${\textrm{Hilb}}_k(h^L, \mu)$ (resp. ${\textrm{Hilb}}_k(h^L, \mu')$).
		Recall that this means that $s_{k, x}$ (resp. $s'_{k, x}$) is of unit norm with respect to ${\textrm{Hilb}}_k(h^L, \mu)$ (resp. ${\textrm{Hilb}}_k(h^L, \mu')$) and orthogonal to the subspace $H^0(X, L^{\otimes k} \otimes \mathcal{J}_x)$ of holomorphic sections of $L^{\otimes k}$ vanishing at $x$.
		\par 
		Immediately from the definition of peak sections and (\ref{eq_bk_id}), we deduce the following bound
		\begin{equation}\label{eq_peak_char}
			\frac{|s'_{k, x}(x)|^2_{(h^L)^k}}{\int_X |s'_{k, x}(y)|^2_{(h^L)^k} d \mu'(y)}
			\geq
			\frac{|s_{k, x}(x)|^2_{(h^L)^k}}{\int_X |s_{k, x}(y)|^2_{(h^L)^k} d \mu'(y)},
		\end{equation}
		which according to (\ref{eq_bk_id}) and our normalization yields
		\begin{equation}\label{eq_peak_char1}
			\int_X |s_{k, x}(y)|^2_{(h^L)^k} d \mu'(y)
			\geq
			\frac{B_k(x, x)}{B'_k(x, x)}.
		\end{equation}
		From (\ref{eq_bk_id}), we deduce
		\begin{multline}\label{eq_int_bk_peak}
			\int_{x \in X \setminus \Sigma} \int_{y \in X} f(x) \cdot |B_k(x, y)|^{2}_{(h^L)^k} \cdot d \mu'(x) \cdot d \mu'(y)
			\\
			=
			\int_{x \in X \setminus \Sigma} f(x) \cdot B_k(x, x) \cdot \Big( \int_{y \in X} |s_{k, x}(y)|^2_{(h^L)^k} \cdot d \mu'(y) \Big) \cdot d \mu'(x)
		\end{multline}
		By (\ref{eq_peak_char1}), we further obtain 
		\begin{multline}\label{eq_int_bk_peak2}
			\int_{x \in X \setminus \Sigma} f(x) \cdot B_k(x, x) \cdot \Big( \int_{y \in X} |s_{k, x}(y)|^2_{(h^L)^k} d \mu'(y) \Big) \cdot d \mu'(x)
			\\
			\geq
			\int_{x \in X \setminus \Sigma} f(x) \cdot \frac{B_k(x, x)^2}{B'_k(x, x)} \cdot d \mu'(x).
		\end{multline}
		By Cauchy-Schwartz inequality and the fact that $\mu'$ coincides with $\mu$ over the support of $f$,
		\begin{multline}\label{eq_int_bk_peak3}
			\Big( \int_{x \in X \setminus \Sigma} f(x) \cdot \frac{B_k(x, x)^2}{B'_k(x, x)} \cdot d \mu'(x) \Big)
			\cdot
			\Big(
			\int_{x \in X \setminus \Sigma} f(x) \cdot B'_k(x, x) \cdot d \mu'(x)
			\Big)
			\\
			\geq
			\Big( \int_{x \in X \setminus \Sigma} f(x) \cdot B_k(x, x) \cdot d \mu(x) \Big)^2.
		\end{multline}
		A combination of (\ref{eq_asympt_bk_diag}), (\ref{eq_non_empt2}), (\ref{eq_int_sigm_disap}), (\ref{eq_int_bk_peak}), (\ref{eq_int_bk_peak2}) and (\ref{eq_int_bk_peak3}) yields
		\begin{equation}\label{eq_lower_bnd}
			\liminf_{k \to \infty}
			\frac{1}{n_k}
			\int_{x \in X} \int_{y \in X} f(x) \cdot |B_k(x, y)|^{2}_{(h^L)^k} \cdot d \mu'(x) \cdot d \mu'(y)
			\geq
			\int_{X} f \cdot d \mu_{\mathrm{eq}}(K, h^L).
		\end{equation}
		\par 
		To obtain a contradiction with (\ref{eq_contr}) -- a statement we have not yet used -- we estimate from above the integral on the left-hand side of (\ref{eq_lower_bnd}).
		Immediately from the definition of $f$ and $\mu'$,
		\begin{multline}\label{eq_upp_bnd1}
			\int_{x \in X} \int_{y \in X} f(x) \cdot |B_k(x, y)|^{2}_{(h^L)^k} \cdot d \mu'(x) \cdot d \mu'(y)
			\leq
			\\
			\int_{x \in X} \int_{y \in X} f(x) \cdot |B_k(x, y)|^{2}_{(h^L)^k} \cdot d \mu(x) \cdot d \mu(y)
			\\
			-
			(1 - e^{-1}) \cdot \int_{x \in K_1} \int_{y \in K_2} |B_k(x, y)|^{2}_{(h^L)^k} \cdot d \mu(x) \cdot d \mu(y).
		\end{multline}
		From (\ref{eq_push_forw}), we deduce
		\begin{equation}\label{eq_upp_bnd2}
			\int_{x \in X} \int_{y \in X} f(x) \cdot |B_k(x, y)|^{2}_{(h^L)^k} \cdot d \mu(x) \cdot d \mu(y)
			=
			\int_{x \in X} f(x) \cdot B_k(x, x) \cdot d \mu(x).
		\end{equation}
		From (\ref{eq_contr}), (\ref{eq_asympt_bk_diag}), (\ref{eq_upp_bnd1}) and  (\ref{eq_upp_bnd2}), we obtain
		\begin{multline}
			\limsup_{k \to \infty}
			\frac{1}{n_k}
			\int_{x \in X} \int_{y \in X} f(x) \cdot |B_k(x, y)|^{2}_{(h^L)^k} \cdot d \mu'(x) \cdot d \mu'(y)
			\\
			\leq
			\int_{X} f \cdot d \mu_{\mathrm{eq}}(K, h^L) - (1 - e^{-1}) \epsilon,
		\end{multline}
		clearly contradicting (\ref{eq_lower_bnd}), and, hence, our initial assumption (\ref{eq_contr}).
		This finishes our proof.
	\end{proof}
	
	\begin{proof}[Proof of Theorem \ref{thm_off_diag_bm}]
		Let us fix $f \in \ccal^0(X \times X)$.
		We would like to verify that $\int f(x, y) \cdot d \mu_k^{\rm{Berg}}(x, y) \to \int f(x, x) \cdot d \mu_{\mathrm{eq}}(K, h^L)$, as $k \to \infty$.
		We define $g \in \ccal^0(X \times X)$ as $g(x, y) := f(x, x)$.
		Then directly from Theorem \ref{thm_diag} and (\ref{eq_push_forw}), we obtain $\int g(x, y) \cdot d \mu_k^{\rm{Berg}}(x, y) \to \int f(x, x) \cdot d \mu_{\mathrm{eq}}(K, h^L)$, as $k \to \infty$.
		By considering the difference $f - g$, we see that it suffices to show that $\int h(x, y) \cdot d \mu_k^{\rm{Berg}}(x, y) \to 0$, as $k \to \infty$, for continuous $h$ vanishing on the diagonal.
		\par 
		According to Theorem \ref{thm_nomass}, the above holds for $h$ lying in the space of the functions $\mathcal{V}$ spanned by $a(x) \cdot b(y)$ where $a, b \in \ccal^0(X)$ have non-intersecting support.
		Hence it also holds for the functions from the uniform closure $\overline{\mathcal{V}}$ of $\mathcal{V}$.
		The proof of Theorem \ref{thm_off_diag_bm} will be complete once we establish that $\overline{\mathcal{V}}$ coincides with the space of functions vanishing on the diagonal.
		\par 
		To establish this, note first that it is immediate that every function from $\overline{\mathcal{V}}$ vanishes on the diagonal.
		Also any function vanishing along the diagonal can be uniformly approximated by functions vanishing in a neighborhood of the diagonal.
		It is hence enough to show that a continuous function $h$ vanishing in a neighborhood of the diagonal lies in $\overline{\mathcal{V}}$.
		To see this, consider a partition of unity $\rho_i$, $i \in I$, subordinate to a sufficiently small mesh.
		Then from the uniform continuity of $h$, by writing $h(x, y) = \sum_{i, j \in I} h(x, y) \rho_i(x) \rho_j(y)$, one sees that the functions $\sum_{i, j \in I} h(x_{i, j}) \rho_i(x) \rho_j(y)$, where $x_{i, j} \in {\rm{supp}} (\rho_i) \times {\rm{supp}} (\rho_j)$ are chosen in an arbitrary way, approximate uniformly the function $h$ if the size of the mesh is small enough, and -- again if the mesh is small enough -- these approximations lie in $\mathcal{V}$ since $h$ vanishes in a neighborhood of the diagonal.
	\end{proof}

	\subsection{Reduction to measures supported away from subvarieties}\label{sect_local}
	This section is the first of two aimed at proving Theorem \ref{thm_off_diag} in full generality.
	We show here that if a measure $\mu$ is truncated around a subvariety, the associated off-diagonal Bergman kernel changes only slightly.
	As a consequence, to prove Theorem \ref{thm_off_diag} in full generality, it suffices to consider measures supported away from a fixed subvariety.
	\par 
	It turns out that this form of “continuity under truncation” extends beyond subvarieties to a broader class -- namely, close subsets that are not charged by any subsequential limit of the diagonal Bergman measures.
	The reason why this “continuity" applies to the analytic subvarieties then follows from the following result that we shall establish in Section \ref{sect_bergm_anal}.
	\begin{thm}\label{thm_anal_no_mass}
		For any Borel measure $\mu$ which does not give full mass to pluripolar subsets, any subvariety $Y \subset X$ and any $\epsilon > 0$, there exists an open neighborhood $U$ of $Y$ and $k_0 \in \nat$, so that for any $k \geq k_0$, we have
		\begin{equation}\label{eq_anal_no_mass}
			\int_U \mu_k^{\rm{Berg}, \Delta} \leq \epsilon.
		\end{equation}
	\end{thm}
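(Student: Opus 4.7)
The plan is to split the diagonal Bergman kernel according to an orthogonal decomposition induced by vanishing on $Y$, and bound each of the two pieces separately. Set $V_k := \{s \in H^0(X, L^{\otimes k}) : s|_Y = 0\}$ and let $V_k^\perp$ be its orthogonal complement in $(H^0(X, L^{\otimes k}), {\textrm{Hilb}}_k(h^L, \mu))$, with $m_k := \dim V_k^\perp$; since the concatenation of orthonormal bases of the two subspaces is an orthonormal basis of the whole space, the diagonal Bergman kernel decomposes as $B_k(x,x) = B_k^{V_k}(x,x) + B_k^{V_k^\perp}(x,x)$, where each summand is the diagonal kernel of the corresponding Hilbert subspace.

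For the $V_k^\perp$-part, the restriction map $H^0(X, L^{\otimes k}) \to H^0(Y, L^{\otimes k}|_Y)$ has kernel $V_k$, hence it injects $V_k^\perp$ into $H^0(Y, L^{\otimes k}|_Y)$, whose dimension grows as $O(k^{\dim_\comp Y})$. Bigness of $L$ gives $n_k = \Theta(k^n)$, and $Y \subsetneq X$ forces $\dim_\comp Y < n$, so $m_k/n_k \to 0$. Since $\int_X B_k^{V_k^\perp}(x,x)\, d\mu = m_k$, this piece contributes at most $m_k/n_k$ to $\int_U \mu_k^{\rm{Berg},\Delta}$, uniformly in $U$, and is therefore $\leq \epsilon/3$ for all $k \geq k_0$. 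For the $V_k$-part, the key reduction is to a \emph{forbidden region} estimate: for every $\eta > 0$, there exists an open neighborhood $U \supset Y$ such that for all $k \in \nat$ and $s \in V_k$,
\begin{equation*}
\int_U |s(x)|^2_{(h^L)^k}\, d\mu(x) \leq \eta \cdot \|s\|^2_{{\textrm{Hilb}}_k(h^L, \mu)}.
\end{equation*}
Summing over an orthonormal basis of $V_k$ yields $\int_U B_k^{V_k}(x,x)\, d\mu \leq \eta \cdot \dim V_k \leq \eta n_k$, so the $V_k$-contribution to $\int_U \mu_k^{\rm{Berg},\Delta}$ is bounded by $\eta$; taking $\eta = \epsilon/3$ fixes a neighborhood $U_0 \supset Y$ that handles the $V_k$-part simultaneously for all $k$.

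Combining the two bounds gives $\int_{U_0} \mu_k^{\rm{Berg},\Delta} \leq 2\epsilon/3$ for $k \geq k_0$. For the finitely many $k < k_0$, continuity of the Bergman kernel on compact $X$ gives $M_k := \sup_X |B_k(x,x)|_{(h^L)^k} < \infty$, and since $Y$ is pluripolar while $\mu$ is non-pluripolar, $\mu(Y) = 0$; further shrinking $U_0$ if necessary makes $\mu(U_0)$ small enough to bound $M_k \mu(U_0)/n_k \leq \epsilon$ for each such $k$. The principal obstacle is the forbidden region estimate, deferred to Section~\ref{sect_forbid}: one must control sections vanishing on $Y$ by their $L^2(\mu)$-norm uniformly in $k$ without any Bernstein--Markov assumption. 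I expect the proof there to combine a Schwarz-type decay of sections in directions normal to $Y$ with a lower bound, afforded by non-pluripolarity of $\mu$, on the $\mu$-mass lying outside any prescribed neighborhood of $Y$.
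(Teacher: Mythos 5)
Your overall architecture (split $H^0(X,L^{\otimes k})$ into a subspace of sections vanishing along $Y$ plus its orthogonal complement, show the complement has negligible relative dimension, and prove a ``forbidden region'' estimate for the vanishing subspace) is exactly the paper's strategy. However, the key lemma you defer is false for your choice of subspace $V_k = \{s : s|_Y = 0\}$. There is no neighborhood $U \supset Y$, independent of $k$, on which every $s \in V_k$ satisfies $\int_U |s|^2_{(h^L)^k}\, d\mu \leq \eta \|s\|^2_{{\textrm{Hilb}}_k(h^L,\mu)}$ with $\eta < 1$. Take $X = \mathbb{P}^1$, $\mu$ Lebesgue measure on $[-1,1]$, $Y = \{0\}$, and $p(x) = x \cdot K_k(x, x_0)$ with $x_0 \in U \setminus Y$ fixed and $K_k$ the Christoffel--Darboux kernel: as $k \to \infty$ the mass of $|p|^2\,d\mu$ concentrates at scale $1/k$ around $x_0 \in U$, so the ratio tends to $1$. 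The structural reason is that order-one vanishing along $Y$ gives only a Schwarz-type gain of size $O(\mathrm{dist}(x,Y))$ relative to the \emph{sup} norm, which is a fixed constant on any fixed neighborhood; meanwhile, without a Bernstein--Markov hypothesis the comparison $\|s\|_{L^\infty}/\|s\|_{{\textrm{Hilb}}_k}$ can lose a factor $e^{ck}$ with $c>0$ fixed. A constant gain cannot beat an exponential loss.

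This is precisely why the paper works with the partial kernels built from $H^0(X, L^{\otimes k} \otimes \mathcal{J}_Y^{\lceil \epsilon k\rceil})$, i.e.\ sections vanishing to order \emph{proportional to $k$} along $Y$: the Lelong-number/psh-envelope argument of Section~\ref{sect_forbid} then yields a genuinely exponential pointwise bound $|B_k^{\epsilon,Y}(x,x)| \leq e^{-\delta k}$ on a $k$-independent neighborhood, with $\delta$ as large as one likes, which dominates the $e^{ck}$ loss from the $L^2$--$L^\infty$ comparison. The price is that the codimension of this subspace is no longer $O(k^{\dim Y})$ as in your restriction-map argument, but of order $k^n$ times a small constant; controlling it requires Fujita's theorem and the continuity of the volume function on the N\'eron--Severi space, to choose $\epsilon'$ so that the codimension is at most $\tfrac{\epsilon}{2} n_k$. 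Your treatment of the complement and of the finitely many small $k$ is fine, but the heart of the proof --- the forbidden-region estimate --- cannot be carried out for order-one vanishing, so the argument as proposed does not close.
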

	\par 
	To formulate a general form of the continuity principle, we fix a closed subset $A \subset X$ and consider a decreasing sequence of open neighborhoods $U_1 \Supset U_2 \Supset \cdots$ of $A$ such that $\bigcap_{i = 1}^{+\infty} U_i = A$. 
	Such a sequence always exists since $A$ is closed.
	By Urysohn's lemma, we consider bump functions $\rho_i : X \to [0, 1]$, $i \in \nat^*$, verifying $\rho_i = 1$ over $X \setminus U_i$ and $\rho_i = 0$ over $U_{i + 1}$. 
	We assume that for any $\epsilon > 0$, there is $i \in \nat^*$, so that for any $k \in \nat$, we have
	\begin{equation}\label{eq_anal_no_mass_a}
		\int_{U_i} \mu_k^{\rm{Berg}, \Delta} \leq \epsilon. 
	\end{equation}
	\par 
	We denote $\mu_i := \rho_i \cdot \mu$, and let $B_{k, i}(x, y) \in L^k_x \otimes (L^k_y)^*$, be the Bergman kernel associated with $h^L$ and $\mu_i$. 
	We denote by $\mu_{k, i}^{\rm{Berg}}$ the associated probability measures on $X \times X$, defined as in (\ref{eq_mu_begmm}).
	The main result of this section goes as follows.
	\begin{prop}\label{prop_reduc}
		Assume that a compact subset $K \subset X \times X$ is such that for any $\epsilon > 0$, $i \in \nat$, there is $k_i \in \nat$, so that $\int_K \mu_{k, i}^{\rm{Berg}} \leq \epsilon$, for any $k \geq k_i$.
		Then for any $\epsilon > 0$, there is $k_0 \in \nat$, so that $\int_K \mu_{k}^{\rm{Berg}} \leq \epsilon$, for any $k \geq k_0$.
	\end{prop}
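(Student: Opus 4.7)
The plan is to compare $n_k \int_K \mu_k^{\rm{Berg}} = \iint_K |B_k(x,y)|_{(h^L)^k}^2 \, d\mu(x) d\mu(y)$ with its analogue for $\mu_i$ via two successive replacements: first the measure $d\mu \otimes d\mu$ is replaced by $d\mu_i \otimes d\mu_i$, then the kernel $B_k$ is replaced by $B_{k,i}$. Both steps will be controlled by the diagonal mass $\int_{U_i} \mu_k^{\rm{Berg}, \Delta}$, which by assumption (\ref{eq_anal_no_mass_a}) can be made arbitrarily small by choosing $i$ large, uniformly in $k$. Below we drop the subscript $(h^L)^k$ on the norm for brevity.

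For the measure replacement, write $d\mu = d\mu_i + (1 - \rho_i) d\mu$ and decompose $d\mu \otimes d\mu - d\mu_i \otimes d\mu_i$ as a sum of three non-negative measures, each dominated by either $(1 - \rho_i)(x) d\mu(x) d\mu(y)$ or its symmetric counterpart. The reproducing property $\int |B_k(x,y)|^2 d\mu(y) = B_k(x,x)$ (which follows from orthonormality of the basis $\{\tilde{s}_l\}$ used to build $B_k$) then yields
\begin{equation*}
\iint_K |B_k|^2 d\mu \otimes d\mu \leq \iint_K |B_k|^2 d\mu_i \otimes d\mu_i + 3 n_k \int_{U_i} \mu_k^{\rm{Berg}, \Delta}.
\end{equation*}

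For the kernel replacement, apply the pointwise bound $|B_k|^2 \leq 2|B_{k,i}|^2 + 2|B_k - B_{k,i}|^2$ and majorize the discrepancy term by its integral over all of $X \times X$. The crux of the argument is the identity
\begin{equation*}
\iint_{X \times X} |B_k - B_{k,i}|^2 d\mu_i \otimes d\mu_i = \iint_{X \times X} (1-\rho_i)(x) (1-\rho_i)(y) \, |B_k(x,y)|^2 d\mu(x) d\mu(y),
\end{equation*}
whose right-hand side is again bounded by $n_k \int_{U_i} \mu_k^{\rm{Berg}, \Delta}$ via the reproducing property. To prove the identity, fix an orthonormal basis $\{\tilde{s}_l\}$ of $(H^0(X, L^{\otimes k}), {\textrm{Hilb}}_k(h^L, \mu))$ and set $A_{lm} := \int \langle \tilde{s}_l, \tilde{s}_m \rangle d\mu_i$; one checks that in this basis $B_k$ corresponds to the identity matrix and $B_{k,i}$ to $A^{-1}$, and a short computation shows that $\iint |B_k - B_{k,i}|^2 d\mu_i \otimes d\mu_i = \operatorname{tr}((A - I)^2)$. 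The same trace $\sum_{l,m} |(A - I)_{lm}|^2$ emerges from expanding the right-hand side of the identity with the pointwise formula $|B_k(x,y)|^2 = \sum_{l,m} \langle \tilde{s}_l(x), \tilde{s}_m(x) \rangle \overline{\langle \tilde{s}_l(y), \tilde{s}_m(y) \rangle}$, closing the loop.

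Combining the two steps gives the clean bound $\int_K \mu_k^{\rm{Berg}} \leq 2 \int_K \mu_{k,i}^{\rm{Berg}} + 5 \int_{U_i} \mu_k^{\rm{Berg}, \Delta}$ valid for every $k \in \nat$. Given $\epsilon > 0$, assumption (\ref{eq_anal_no_mass_a}) furnishes $i$ making the last term at most $\epsilon / 2$ uniformly in $k$, and then the hypothesis on $K$ applied with this $i$ supplies $k_0 \in \nat$ ensuring $\int_K \mu_{k,i}^{\rm{Berg}} \leq \epsilon / 4$ for $k \geq k_0$. The main obstacle is the closed-form evaluation of the Hilbert-Schmidt discrepancy between the two Bergman kernels: recognizing the same trace $\operatorname{tr}((A - I)^2)$ simultaneously as a kernel-level $L^2$ norm in $d\mu_i \otimes d\mu_i$ and as the weighted integral of $|B_k|^2$ against $(1-\rho_i) \otimes (1-\rho_i) \, d\mu \otimes d\mu$ is what bridges the off-diagonal estimate back to the diagonal control (\ref{eq_anal_no_mass_a}).
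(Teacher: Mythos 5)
Your proof is correct, and its overall architecture is the same as the paper's: compare $\iint_K |B_k|^2\,d\mu\,d\mu$ with $\iint_K |B_{k,i}|^2\,d\mu_i\,d\mu_i$ in two steps (swap the measure, then swap the kernel), and control both errors by the diagonal mass $\int_{U_i}\mu_k^{\rm{Berg},\Delta}$ via the reproducing property and hypothesis (\ref{eq_anal_no_mass_a}). The one genuinely different ingredient is how you handle the kernel discrepancy. The paper invokes Lubinsky's comparison inequality (Lemma \ref{lem_lub}), integrates it in $x$ against $d\mu_i$, and bounds the resulting $\int (B_{k,i}(x,x)-B_k(x,x))\,d\mu_i(x)$; in operator language this is the bound $\operatorname{Tr}[I-P_i]$, where $P_i$ is the compression to $H^0(X,L^{\otimes k})$ of multiplication by $\rho_i$. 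You instead prove the \emph{exact} identity $\iint |B_k-B_{k,i}|^2\,d\mu_i\,d\mu_i = \operatorname{Tr}[(I-P_i)^2]=\iint (1-\rho_i)(x)(1-\rho_i)(y)|B_k(x,y)|^2\,d\mu\,d\mu$, which I checked is correct (your Gram-matrix computation goes through; both sides equal $\sum_{l,m}|(I-A)_{lm}|^2$). Since $0\le I-P_i\le I$, your bound $\operatorname{Tr}[(I-P_i)^2]\le\operatorname{Tr}[I-P_i]$ is actually slightly sharper than Lubinsky's, and your argument has the merit of being self-contained (no external citation needed), at the cost of a slightly longer computation. The remaining bookkeeping (the constant $3$ versus the paper's $2$ in the measure-replacement step, the use of $a^2\le 2b^2+2c^2$ in place of Minkowski, and the final choice of $i$ then $k_0$) is all fine.
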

	In order to establish Proposition \ref{prop_reduc}, the following lemma will be crucial.
	\begin{lem}\label{lem_local_regul}
		For any $\epsilon > 0$, there are $k_0, i_0 \in \nat$, so that for any $k \geq k_0$ and $i \geq i_0$, we have
		\begin{equation}
			 \iint \Big| B_{k, i}(x, y) - B_k(x, y) \Big|_{(h^L)^k}^2 \cdot d \mu_i(x) d \mu_i(y)
			 \leq
			 \epsilon \cdot n_k.
		\end{equation}
	\end{lem}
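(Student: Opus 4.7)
The plan is to expand the squared $L^2$-distance in an orthonormal basis for the $\mu$-metric and reduce the whole expression to a spectral sum controlled by hypothesis \eqref{eq_anal_no_mass_a}.

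I would fix an orthonormal basis $\{s_\alpha\}_{\alpha=1}^{n_k}$ of $(H^0(X, L^{\otimes k}), \textrm{Hilb}_k(h^L, \mu))$ and consider the Gram matrix $M_i$ of this basis under the truncated inner product,
\begin{equation*}
	(M_i)_{\alpha\beta} := \int \langle s_\alpha(x), s_\beta(x)\rangle_{(h^L)^k} \, \rho_i(x) \, d\mu(x).
\end{equation*}
Since $\mu_i \leq \mu$, the matrix $M_i$ is Hermitian positive semi-definite with eigenvalues $\lambda_1, \ldots, \lambda_{n_k}$ lying in $[0, 1]$.

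Next, I would expand $|B_{k,i} - B_k|^2 = |B_{k,i}|^2 - 2\, \Re \langle B_{k,i}, B_k\rangle + |B_k|^2$ and integrate each term separately against $d\mu_i \otimes d\mu_i$. Writing both Bergman kernels in the basis $\{s_\alpha\}$ (and, for the cross term, also in a $\mu_i$-orthonormal basis $\{\tilde s^{(i)}_\gamma\}$) and unfolding the fiberwise inner product on $L^k_x \otimes (L^k_y)^*$ using the standard dual-metric identity $\langle b^*, d^*\rangle = \langle d, b\rangle$, the three contributions evaluate respectively to
\begin{equation*}
	\iint |B_{k,i}|^2 \, d\mu_i d\mu_i = n_k, \quad \iint \langle B_{k,i}, B_k \rangle \, d\mu_i d\mu_i = \operatorname{Tr}(M_i), \quad \iint |B_k|^2 \, d\mu_i d\mu_i = \|M_i\|_{HS}^2.
\end{equation*}
The first is the reproducing property of $B_{k,i}$ for $\mu_i$; the third follows because $|B_k(x,y)|^2 = \sum_{\alpha,\beta} \langle s_\alpha(x), s_\beta(x)\rangle \langle s_\beta(y), s_\alpha(y)\rangle$ integrates to $\sum_{\alpha,\beta} |(M_i)_{\alpha\beta}|^2$; and the cross term collapses to $\operatorname{Tr}(M_i)$ via Parseval in the $\mu_i$-metric, since $\sum_\gamma |\langle \tilde s_\gamma^{(i)}, s_\alpha\rangle_{\mu_i}|^2 = \|s_\alpha\|_{\mu_i}^2 = (M_i)_{\alpha\alpha}$. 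Combining these yields the clean spectral identity
\begin{equation*}
	\iint |B_{k,i}(x,y) - B_k(x,y)|^2_{(h^L)^k} \, d\mu_i(x) d\mu_i(y) = \sum_{j=1}^{n_k} (1 - \lambda_j)^2.
\end{equation*}

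Finally, since $\lambda_j \in [0,1]$, one has $(1-\lambda_j)^2 \leq 1 - \lambda_j$, so the right-hand side is bounded by $n_k - \operatorname{Tr}(M_i) = \int_X (1 - \rho_i(x))\, B_k(x,x)\, d\mu(x) \leq \int_{U_i} B_k(x,x)\, d\mu(x) = n_k \int_{U_i} \mu_k^{\rm{Berg}, \Delta}$, which by hypothesis \eqref{eq_anal_no_mass_a} is at most $\epsilon n_k$ for $i \geq i_0$, uniformly in $k$; the role of $k_0$ is only to guarantee $n_k > 0$. The main technical point will be the careful bookkeeping of the dual-fiber inner product, in particular the verification of the cross-term identity $\iint \langle B_{k,i}, B_k\rangle d\mu_i d\mu_i = \operatorname{Tr}(M_i)$; once this is established, the remainder of the proof is elementary linear algebra together with \eqref{eq_anal_no_mass_a}.
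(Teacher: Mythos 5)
Your proof is correct, but it reaches the key estimate by a genuinely different route than the paper. The paper invokes Lubinsky's pointwise comparison inequality (Lemma \ref{lem_lub}): for $\mu_i \le \mu$ one has $\int |B_{k,i}(x,y)-B_k(x,y)|^2_{(h^L)^k}\,d\mu_i(y) \le B_{k,i}(x,x)-B_k(x,x)$, and integrating in $x$ against $d\mu_i$ reduces the lemma to bounding $n_k - \int B_k(x,x)\,d\mu_i(x)$, which is then controlled by (\ref{eq_anal_no_mass_a}). You instead prove the integrated inequality from scratch: expanding in a $\mu$-orthonormal basis and applying Parseval for a $\mu_i$-orthonormal basis, you arrive at the exact identity
\begin{equation*}
	\iint \big|B_{k,i}(x,y)-B_k(x,y)\big|^2_{(h^L)^k}\, d\mu_i(x)\, d\mu_i(y) = n_k - 2\,\mathrm{Tr}(M_i)+\mathrm{Tr}(M_i^2)=\sum_{j=1}^{n_k}(1-\lambda_j)^2,
\end{equation*}
and the elementary bound $(1-\lambda)^2\le 1-\lambda$ for $\lambda\in[0,1]$ lands you on the same quantity $n_k-\mathrm{Tr}(M_i)=\int(1-\rho_i(x))\,B_k(x,x)\,d\mu(x) \le \int_{U_i}B_k(x,x)\,d\mu(x)$ that the paper bounds in (\ref{lem_local_regul2})--(\ref{lem_local_regul4}), finished by (\ref{eq_anal_no_mass_a}). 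I checked the three term-by-term evaluations ($n_k$, $\mathrm{Tr}(M_i)$, $\|M_i\|_{HS}^2$) and they are correct, as is the claim that the eigenvalues of $M_i$ lie in $[0,1]$. Your computation is self-contained and in fact furnishes a direct proof of the integrated form of Lubinsky's inequality (even slightly sharper, with $(1-\lambda_j)^2$ in place of $1-\lambda_j$), whereas the paper's route leans on \cite{LubinskyUnivers} together with the observation of Remark \ref{rem_lub} that Lubinsky's argument transfers from the real line to this setting. The only point to make explicit is that Parseval for $\{\tilde s^{(i)}_\gamma\}$ requires the $\mu_i$-inner product to be positive definite on all of $H^0(X, L^{\otimes k})$ --- the same implicit assumption the paper makes when defining $B_{k,i}$.
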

	The proof of Lemma \ref{lem_local_regul} is based on the following comparison result on the off-diagonal Bergman kernels due to Lubinsky \cite[(3.3)]{LubinskyUnivers}.
	\begin{lem}\label{lem_lub}
		Assume that the measures $\mu_1$ and $\mu_2$ are ordered as $\mu_1 \leq \mu_2$.
		Then for any $x \in X$, the associated Bergman kernels are related as
		\begin{equation}\label{eq_lem_lub}
			\int
			\Big|
			B_{k, 1}(x, y)
			-
			B_{k, 2}(x, y)
			\Big|_{(h^L)^k}^2 
			\cdot
			d \mu_1(y)
			\leq
			B_{k, 1}(x, x)
			-
			B_{k, 2}(x, x).
		\end{equation}
	\end{lem}
	\begin{rem}\label{rem_lub}
		Observe that the characterization of the Bergman kernel analogous to (\ref{eq_bergman_max}) directly ensures that the right-hand side of (\ref{eq_lem_lub}) is positive.
	\end{rem}
	\begin{proof}
		Lubinsky in \cite{LubinskyUnivers} establishes (\ref{eq_lem_lub}) only in the setting of measures on the real line, but the argument relies exclusively on the reproducing property of the Bergman kernel, and so it remains valid in our generality.
		For the convenience of the reader, we reproduce the argument below.
		\par 
		Using the fact that the Bergman kernel is self-adjoint, we write
		\begin{equation}
			\Big|
			B_{k, 1}(x, y)
			-
			B_{k, 2}(x, y)
			\Big|_{(h^L)^k}^2 
			=
			\Big(
			B_{k, 1}(x, y)
			-
			B_{k, 2}(x, y)
			\Big)
			\cdot
			\Big(
			B_{k, 1}(y, x)
			-
			B_{k, 2}(y, x)
			\Big).
		\end{equation}
		Now, by the reproducing property of the Bergman kernel, we have
		\begin{equation}
		\begin{aligned}
			&
			\int
			B_{k, 1}(x, y)
			\cdot
			B_{k, 1}(y, x)
			\cdot
			d \mu_1(y)
			=
			B_{k, 1}(x, x),
			\\
			&
			\int
			B_{k, 1}(x, y)
			\cdot
			B_{k, 2}(y, x)
			\cdot
			d \mu_1(y)
			=
			B_{k, 2}(x, x).
		\end{aligned}
		\end{equation}
		By taking the adjoint of the second equation above, we have
		\begin{equation}
			\int
			B_{k, 2}(x, y)
			\cdot
			B_{k, 1}(y, x)
			\cdot
			d \mu_1(y)
			=
			B_{k, 2}(x, x).
		\end{equation}
		Finally, by using the bound $\mu_1 \leq \mu_2$, we obtain
		\begin{equation}
			\int
			|B_{k, 2}(x, y)|_{(h^L)^k}^2 
			\cdot
			d \mu_1(y)
			\leq
			\int
			|B_{k, 2}(x, y)|_{(h^L)^k}^2 
			\cdot
			d \mu_2(y),
		\end{equation}
		which we rewrite using the reproducing property of the Bergman kernel and the fact that the Bergman kernel is self-adjoint as
		\begin{equation}
			\int
			B_{k, 2}(x, y)
			\cdot
			B_{k, 2}(y, x)
			\cdot
			d \mu_1(y)
			\leq
			B_{k, 2}(x, x).
		\end{equation}
		A combination of the above estimates yield (\ref{eq_lem_lub}).
	\end{proof}
	\begin{proof}[Proof of Lemma \ref{lem_local_regul}]
		Note first that immediately from our construction of $\rho_i$, we have $\mu_1 \leq \mu_2 \leq \cdots \leq \mu$.
		From this and Lemma \ref{lem_lub}, we deduce that for any $i \in \nat$, we have
		\begin{equation}\label{lem_local_regul1}
			\iint
			\Big|
			B_{k, i}(x, y)
			-
			B_{k}(x, y)
			\Big|_{(h^L)^k}^2 
			\cdot
			d \mu_i(x)
			d \mu_i(y)
			\leq
			\int
			\Big(
			B_{k, i}(x, x)
			-
			B_{k}(x, x)
			\Big)
			\cdot
			d \mu_i(x).
		\end{equation}
		By our definition of $\mu_i$, we have the following bound
		\begin{equation}\label{lem_local_regul2}
			\int
			B_{k}(x, x)
			\cdot
			d \mu_i(x)
			\geq
			\int_{X \setminus U_i}
			B_{k}(x, x)
			\cdot
			d \mu(x).
		\end{equation}
		Remark, however, that we have
		\begin{equation}\label{lem_local_regul4}
			\int
			B_{k, i}(x, x)
			\cdot
			d \mu_i(x) = n_k, 
		\end{equation}
		and similarly for the Bergman kernel associated with $\mu$.
		By this and (\ref{eq_anal_no_mass_a}), for any $\epsilon > 0$, there are $i_0, k_0 \in \nat$, so that for any $i \geq i_0$, $k \geq k_0$,
		\begin{equation}\label{lem_local_regul3}
			\int_{X \setminus U_i}
			B_{k}(x, x)
			\cdot
			d \mu(x)
			\geq
			(1-\epsilon)n_k.
		\end{equation}
		Lemma \ref{lem_local_regul} now follows immediately from (\ref{lem_local_regul1}), (\ref{lem_local_regul2}), (\ref{lem_local_regul4}) and (\ref{lem_local_regul3}).
	\end{proof}
	\begin{proof}[Proof of Proposition \ref{prop_reduc}]
		Directly from (\ref{eq_anal_no_mass_a}), we see that for any $\epsilon > 0$, there are $i_0, k_0 \in \nat$, so that for any $i \geq i_0$, $k \geq k_0$, we have
		\begin{equation}\label{eq_prop_reduc1}
			\iint_{U_i \times X} \big| B_k(x, y) \big|_{(h^L)^k}^2 \cdot d \mu(x) d \mu(y)
			\leq
			\frac{\epsilon \cdot n_k}{4}.
		\end{equation}
		Moreover, from Lemma \ref{lem_local_regul}, our assumption $\int_K \mu_{k, i}^{\rm{Berg}} \leq \epsilon$, and Minkowski inequality, we see that for any $\epsilon > 0$, there are $k_0 \in \nat$, $i \geq i_0$, so that for any $k \geq k_0$, we have
		\begin{equation}\label{eq_prop_reduc2}
			 \iint_{K} \big| B_k(x, y) \big|_{(h^L)^k}^2 \cdot d \mu_i(x) d \mu_i(y)
			 \leq
			 \frac{\epsilon \cdot n_k}{2}.
		\end{equation}
		Note, however, that by the definition of $\mu_i$, we have
		\begin{multline}\label{eq_prop_reduc3}
			 \iint_{K} \big| B_k(x, y) \big|_{(h^L)^k}^2 \cdot d \mu_i(x) d \mu_i(y)
			 \geq
			 \iint_{K} \big| B_k(x, y) \big|_{(h^L)^k}^2 \cdot d \mu(x) d \mu(y)
			 \\
			 -
			 2 \cdot \iint_{U_i \times X} \big| B_k(x, y) \big|_{(h^L)^k}^2 \cdot d \mu(x) d \mu(y).
		\end{multline}
		A combination of (\ref{eq_prop_reduc1}), (\ref{eq_prop_reduc2}) and (\ref{eq_prop_reduc3}) yields Proposition \ref{prop_reduc}.
	\end{proof}
	
	\subsection{Decay of the kernel for measures supported away from a divisor}\label{sect_supp}
	This section is the second of two devoted to proving Theorem \ref{thm_off_diag}.
	Here we show that for measures supported away from certain divisors, a quantitative version of Theorem \ref{thm_off_diag} holds.
	\par 
	We will now assume that $k_0 \in \nat$ is big enough so that $n_{k_0} \geq 2$.
	We fix two linearly independent sections $s_1, s_2 \in H^0(X, L^{\otimes k_0})$, and consider the subvariety $Y := {\textrm{div}}(s_2)$.
	We fix a Borel measure $\mu_0$ on $X$ whose support is disjoint from $Y$.
	If the support of $\mu_0$ is not an analytic subset, then the form defined as in (\ref{eq_defn_l2}), but for $\mu_0$ instead of $\mu$, is positive definite, and hence the associated Bergman kernel, $B_{k, 0}(x, y) \in L^k_x \otimes (L^k_y)^*$, can be defined.
	In general, one can define $B_{k, 0}(x, y)$ by considering an orthonormal basis of a vector subspace $E_k \subset H^0(X, L^{\otimes k})$ of maximal dimension, so that the restriction map $E_k \to L^2(\mu)$ is injective.
	While $|B_{k, 0}(x, y)|^2_{(h^L)^k}$ might depend on the choice of $E_k$, an easy verification shows that the resulting measure $|B_{k, 0}(x, y)|^2_{(h^L)^k} d \mu_0(x) d \mu_0 (y)$ is independent of it.
	The principal result of this section — obtained without requiring any condition on $\mu_0$ other than that its support be disjoint from $Y$ — is the following.
	\begin{thm}\label{thm_qual_deloc}
		There is $C > 0$, so that for any $k \in \nat^*$, we have
		\begin{equation}\label{eq_thm_qual_deloc}
			\iint \big|B_{k, 0}(x, y)\big|^2_{(h^L)^k} \cdot \Big| \frac{s_1(x)}{s_2(x)} - \frac{s_1(y)}{s_2(y)} \Big|^2 \cdot d \mu_0(x) d \mu_0 (y)
			\leq 
			C \cdot (n_k - n_{k - k_0}).
		\end{equation}
	\end{thm}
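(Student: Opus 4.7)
The plan is to recognize the left-hand side of \eqref{eq_thm_qual_deloc} as twice the squared Hilbert--Schmidt norm of the operator $(1 - B_{k, 0}) M_f\big|_H$, where $H := H^0(X, L^{\otimes k})$, $M_f$ denotes multiplication by the meromorphic function $f := s_1/s_2$, and $B_{k, 0}$ doubles as the Bergman projection from $L^2(\mu_0, (h^L)^k)$ onto $H$. Since $\mathrm{supp}(\mu_0)$ is compact and disjoint from $Y = \mathrm{div}(s_2)$, the function $f$ is continuous and bounded on $\mathrm{supp}(\mu_0)$, so $M_f$ is a bounded operator on $L^2(\mu_0, (h^L)^k)$. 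The crucial structural input is that $M_f$ sends the subspace $s_2 \cdot H^0(X, L^{\otimes (k - k_0)}) \subset H$ back into $H$, which is ultimately the source of the codimension $n_k - n_{k - k_0}$.

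The first step is the algebraic identity
\[
\iint |f(x) - f(y)|^2 \, |B_{k, 0}(x, y)|^2_{(h^L)^k} \, d\mu_0(x) \, d\mu_0(y) = 2 \sum_{i = 1}^{n_k} \big\|(1 - B_{k, 0}) M_f s_i\big\|_{L^2(\mu_0)}^2,
\]
valid for any orthonormal basis $(s_i)$ of $(H, \mathrm{Hilb}_k(h^L, \mu_0))$. To obtain it, I would expand $|f(x) - f(y)|^2 = |f(x)|^2 + |f(y)|^2 - f(x) \overline{f(y)} - \overline{f(x)} f(y)$, apply the reproducing property $\int |B_{k, 0}(x, y)|^2_{(h^L)^k} d\mu_0(y) = B_{k, 0}(x, x)$ to the two diagonal terms, and recognize the off-diagonal sum $\sum_{i, j} |\langle M_f s_i, s_j \rangle|^2$ as $\sum_i \|B_{k, 0} M_f s_i\|^2$ by Parseval. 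The Pythagorean identity $\|M_f s_i\|^2 = \|B_{k, 0} M_f s_i\|^2 + \|(1 - B_{k, 0}) M_f s_i\|^2$ then produces the right-hand side above.

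Next, I would introduce
\[
V_k := s_2 \cdot H^0\bigl(X, L^{\otimes (k - k_0)}\bigr) \subset H,
\]
of dimension $n_{k - k_0}$ (multiplication by the nonzero section $s_2$ is injective). The key observation is that for any $s = s_2 \tilde{s} \in V_k$, the product $M_f s = s_1 \tilde{s}$ is a bona fide holomorphic section of $L^{\otimes k}$, hence $(1 - B_{k, 0}) M_f s = 0$. Choosing the orthonormal basis $(s_i)$ so that its first $n_{k - k_0}$ elements span $V_k$, the sum in the identity above reduces to a sum over the remaining $n_k - n_{k - k_0}$ basis vectors in $W_k := V_k^\perp \cap H$.

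To conclude, I would invoke the trivial estimate $\|(1 - B_{k, 0}) M_f s\|_{L^2(\mu_0)} \leq \|M_f s\|_{L^2(\mu_0)} \leq \|f\|_{L^\infty(\mu_0)}$ for each unit vector $s \in W_k$, which yields \eqref{eq_thm_qual_deloc} with $C = 2 \|s_1 / s_2\|_{L^\infty(\mathrm{supp}(\mu_0))}^2 < \infty$. The only slightly delicate point is keeping track of the bundle-valued nature of $B_{k, 0}(x, y)$ while carrying out the scalar algebraic manipulation in the first step; but since $f(x) - f(y)$ is a scalar and multiplies the kernel $B_{k, 0}(x, y) \in L^k_x \otimes (L^k_y)^*$ cleanly, the manipulation goes through verbatim after local trivialization. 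Once the identity is in place, the dimension count for $V_k$ closes the argument immediately.
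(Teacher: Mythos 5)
Your proof is correct and is essentially the paper's argument in a slightly streamlined form: your operator $(1-B_{k,0})M_f\big|_{H}$ is exactly the paper's $S_k=(1-B_{k,0})M_fB_{k,0}$, your basis expansion of $2\sum_i\|(1-B_{k,0})M_fs_i\|^2$ is the paper's identification of the left-hand side with $2\|S_k\|_{\mathrm{HS}}^2$, and your adapted-basis count on $V_k=s_2\cdot H^0(X,L^{\otimes(k-k_0)})$ is the paper's bound $\|S_k\|_{\mathrm{HS}}^2\leq\|S_k\|^2\cdot\mathrm{codim}(\ker S_k)$. No gaps.
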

	\begin{rem}\label{rem_fuj}
		a) Note that by Fujita's theorem \cite{Fujita}, recalled after (\ref{defn_vol}), the right-hand side of (\ref{eq_thm_qual_deloc}) is asymptotically smaller than $n_k$.
		In particular, if for a compact subset $K \subset (X \setminus Y) \times (X \setminus Y)$, there exists a constant $C_1 > 0$ such that $| s_1(x)/s_2(x) - s_1(y)/s_2(y)| \geq C_1$, for any $x \times y \in K$, then Theorem \ref{thm_qual_deloc} provides a quantitative version of Theorem \ref{thm_off_diag}.
		\par 
		b)
		In the setting of orthogonal polynomials described in (\ref{eq_cd_orth}), one can consider $s_2 := \sigma^2$ and $s_1 = z \cdot \sigma^2$, where $\sigma \in H^0(\mathbb{P}^1, \mathscr{O}(1))$ is as described after (\ref{eq_cd_orth}).
		Then for any compact $K \subset \comp \times \comp \subset \mathbb{P}^1 \times \mathbb{P}^1$, not intersecting the diagonal, the above $C_1$ exists. 
		We conclude that for arbitrary compactly supported measures $\mu$ on $\comp$ with non-finite support, the estimate (\ref{eq_rem_fuj}) holds.
		We note that when the support of $\mu$ lies inside of the real axis, one can give a shorter proof of (\ref{eq_rem_fuj}) based on the Christoﬀel-Darboux formula, see in particular \cite[(2.13)]{NevaiCond}.
	\end{rem}
	\begin{proof}
		Our proof is inspired by the the proof of \cite[Theorem 17]{BermanOrtega} by Berman-Ortega-Cerdá in the sense that the estimate (\ref{eq_thm_qual_deloc}) is established by studying the Hilbert-Schmidt norm of a certain sequence of operators.
		However, in contrast to \cite{BermanOrtega}, our operators are not of Toeplitz type. 
		This allows us to bypass -- at the expense of a more delicate analysis of the associated Schwartz kernel -- the assumption present in \cite{BermanOrtega} that the support of the measure is totally real.
		A somewhat similar sequence of operators was used in the context of orthogonal polynomials by Simon \cite{SimonWeakConv}.
		\par 
		More precisely, we let $f := s_1/s_2$ and consider a sequence of operators $S_k: L^2(X, L^{\otimes k}, \mu_0) \to L^2(X, L^{\otimes k}, \mu_0)$, defined through their Schwartz kernels
		\begin{equation}\label{eq_sk_sch1}
			S_k(x, y) := \int B_{k, 0}(x, z) \cdot \big( f(x) - f(z) \big) \cdot B_{k, 0}(z, y) \cdot d \mu_0(z).
		\end{equation}
		\par 
		Let us verify that the operators $S_k$, $k \in \nat$, are uniformly bounded, i.e. there is $C > 0$, so that $\| S_k \| \leq C$, where $\| \cdot \|$ is the operator norm.
		Since the measure $\mu_0$ is supported away from $Y$, the function $f$ is uniformly bounded on its support.
		The operator $S_k$ is given by a composition of a Bergman projection (which is of unit norm) and the commutator of the Bergman projection and the multiplication operator by $f$ (which is of bounded norm, independent of $k \in \nat$).
		Hence, there is $C > 0$, so that $\| S_k \| \leq C$, for any $k \in \nat$.
		\par 
		Now, let us verify that the kernel of $S_k$ has codimension at most $n_k - n_{k - k_0}$.
		Since the second factor of $S_k$ is given by the Bergman projection, we see that $H^0(X, L^{\otimes k})^{\perp} \subset \ker S_k$.
		Now, consider a sequence of linear operators $F_k : H^0(X, L^{\otimes (k - k_0)}) \to H^0(X, L^{\otimes k})$, given by a multiplication by $s_2$.
		It is immediate to verify that $\Im F_k \subset \ker S_k$.
		But the map $F_k$ is clearly an injection, and so the dimension of its image is $n_{k - k_0}$.
		This along with the obvious fact $\Im F_k \cap H^0(X, L^{\otimes k})^{\perp} = \emptyset$ yields the needed bound on the codimension of the kernel of $S_k$.
		\par 
		As a conclusion, we deduce the following bound
		\begin{equation}\label{eq_sk_sch2}
			\| S_k \|_{{\textrm{HS}}}^2 \leq C \cdot (n_k - n_{k - k_0}),
		\end{equation}
		where $\| \cdot \|_{{\textrm{HS}}}$ is the Hilbert-Schmidt norm.
		Since the Hilbert-Schmidt norm of an integral operator can be expressed through the $L^2$-norm of its Schwartz kernel, we deduce
		\begin{equation}\label{eq_sk_sch3}
			\iint \big| S_k(x, y) \big|^2_{(h^L)^k} \cdot d \mu_0(x) d \mu_0(y) \leq  C \cdot (n_k - n_{k - k_0}).
		\end{equation}
		\par 
		\begin{sloppypar}
		We shall now see that (\ref{eq_sk_sch3}) is essentially a restatement of (\ref{eq_thm_qual_deloc}).
		By using the identity $\big| S_k(x, y) \big|^2_{(h^L)^k} = S_k(x, y) \cdot S_k(x, y)^*$ and (\ref{eq_sk_sch1}), we rewrite 
		\begin{multline}\label{eq_sk_sch4}
			\iint \big| S_k(x, y) \big|^2_{(h^L)^k} \cdot  d \mu_0(x) d \mu_0(y) 
			\\
			=
			\iint  \int B_{k, 0}(x, z) \cdot \big( f(x) - f(z) \big) \cdot B_{k, 0}(z, y) \cdot d \mu_0(z) \cdot
			\\
			\cdot \int  B_{k, 0}(x, w)^* \cdot \big( \overline{f}(x) - \overline{f}(w) \big) \cdot B_{k, 0}(w, y)^*  \cdot  d \mu_0(w)  \cdot  d \mu_0(x) d \mu_0(y).
		\end{multline}
		\end{sloppypar}
		\noindent
		Using Fubini's theorem and the symmetry $B_{k, 0}(x, y)^* = B_{k, 0}(y, x)$, we further rewrite
		\begin{multline}\label{eq_sk_sch5}
			\iint \big| S_k(x, y) \big|^2_{(h^L)^k} \cdot  d \mu_0(x) d \mu_0(y) 
			\\
			=
			\iiiint B_{k, 0}(x, z) B_{k, 0}(z, y) B_{k, 0}(w, x) B_{k, 0}(y, w) \cdot 
			\\
			\cdot \big( f(x) - f(z) \big) \cdot \big( \overline{f}(x) - \overline{f}(w) \big) \cdot  d \mu_0(z) d \mu_0(w) d \mu_0(x) d \mu_0(y).
		\end{multline}
		We expand the integrand, apply the reproducing property of the Bergman kernel to eliminate two of the four kernel factors in each of the resulting terms as follows
		\begin{equation}
		\begin{aligned}
			&\iint B_{k, 0}(x, z) B_{k, 0}(z, y) B_{k, 0}(w, x) B_{k, 0}(y, w) \cdot f(x) \overline{f}(x) \cdot  d \mu_0(z) d \mu_0(w)
			\\
			&
			\qquad \qquad \qquad \qquad \qquad \qquad\qquad \qquad \qquad \qquad \qquad
			= \big| B_{k, 0}(x, y) \big|^2_{(h^L)^k} \cdot f(x) \overline{f}(x),	
			\\
			&\iint B_{k, 0}(x, z) B_{k, 0}(z, y) B_{k, 0}(w, x) B_{k, 0}(y, w) \cdot f(x) \overline{f}(w) \cdot  d \mu_0(z) d \mu_0(y)
			\\
			&
			\qquad \qquad \qquad \qquad \qquad \qquad\qquad \qquad \qquad \qquad \qquad
			= \big| B_{k, 0}(x, w) \big|^2_{(h^L)^k} \cdot f(x) \overline{f}(w),		
			\\
			&\iint B_{k, 0}(x, z) B_{k, 0}(z, y) B_{k, 0}(w, x) B_{k, 0}(y, w) \cdot f(z) \overline{f}(x) \cdot  d \mu_0(w) d \mu_0(y)
			\\
			&
			\qquad \qquad \qquad \qquad \qquad \qquad\qquad \qquad \qquad \qquad \qquad
			= \big| B_{k, 0}(x, z) \big|^2_{(h^L)^k} \cdot f(z) \overline{f}(x),		
			\\
			&\iint B_{k, 0}(x, z) B_{k, 0}(z, y) B_{k, 0}(w, x) B_{k, 0}(y, w) \cdot f(z) \overline{f}(w) \cdot  d \mu_0(x) d \mu_0(y)
			\\
			&
			\qquad \qquad \qquad \qquad \qquad \qquad\qquad \qquad \qquad \qquad \qquad
			= \big| B_{k, 0}(z, w) \big|^2_{(h^L)^k} \cdot f(z) \overline{f}(w).	
		\end{aligned}
		\end{equation}
		By renaming the variables, we obtain
		\begin{multline}\label{eq_sk_sch6}
			\iint \big| S_k(x, y) \big|^2_{(h^L)^k}  \cdot  d \mu_0(x) d \mu_0(y) 
			\\
			=
			\frac{1}{2}
			\iint \big| B_{k, 0}(x, y) \big|^2_{(h^L)^k} \cdot \big| f(x) - f(y) \big|^2 \cdot d \mu_0(x) d \mu_0(y),
		\end{multline}
		which finishes the proof of Theorem \ref{thm_qual_deloc} by (\ref{eq_sk_sch3}).
	\end{proof}
	
	We are now finally in a position to prove the main result of this paper.
	\begin{proof}[Proof of Theorem \ref{thm_off_diag}]
		\begin{sloppypar}
		Recall that by Iitaka fibration theorem, cf. \cite[Theorem 2.1.33 and Definition 2.2.1]{LazarBookI}, our bigness assumption on $L$ assures that for $k$ big enough, the Kodaira map $X \dashrightarrow \mathbb{P}(H^0(X, L^{\otimes k}))$ is birational onto its image.
		Hence, the subset $\Sigma := \{ x \in X : \text{ there is } y \in X \setminus \{x\}, \text{ verifying } s/s'(x) = s/s'(y) \text{ for any } s, s' \in H^0(X, L^{\otimes k}) \}$ is a proper subvariety of $X$.
		\end{sloppypar}
		\par 
		We now consider a subset $Y := \Sigma \cup {\textrm{div}}(s_1) \cup \cdots \cup {\textrm{div}}(s_{n_k})$, where $s_i$, $i = 1, \ldots, n_k$, is a fixed basis of $H^0(X, L^{\otimes k})$.
		The set $Y$ is a subvariety.
		Hence, according to Theorem \ref{thm_anal_no_mass} and Proposition \ref{prop_reduc}, it suffices to establish Theorem \ref{thm_off_diag} for measures $\mu$ which are supported away from $Y$.
		Note, however, that for such measures $\mu$, for a fixed $K \subset X \times X$ not intersecting the diagonal, there is $C > 0$, so that for $K' := K \cap {\rm{supp}}(\mu) \times {\rm{supp}}(\mu)$, for any $x, y \in K'$, we have
		\begin{equation}\label{eq_low_bnd}
			\sum_{\substack{i, j = 1 \\ i \neq j}}^{n_k}
			\Big| \frac{s_i(x)}{s_j(x)} - \frac{s_i(y)}{s_j(y)} \Big|^2
			\geq 
			C.
		\end{equation}		
		Theorem \ref{thm_off_diag} for such measures $\mu$ now follows immediately from Theorem \ref{thm_qual_deloc}, Remark \ref{rem_fuj} and (\ref{eq_low_bnd}).
		As explained above, this finishes our proof.
	\end{proof}
	
	\section{On the weak limit of the diagonal Bergman measures}\label{sect_cd_diag}
	This section is mostly devoted to the following problem: which measures arise as subsequential limits of diagonal Bergman measures?
	As a consequence of our study, we will derive Theorem \ref{thm_anal_no_mass} in Section \ref{sect_bergm_anal}, modulo a statement that is established in Section \ref{sect_forbid}.
	In Theorem \ref{sect_bergm_pp}, we then present a generalization of Theorem \ref{thm_anal_no_mass} and discuss a related open problem.
	Finally, Section \ref{sect_loc_diag} is devoted to the proof of Theorem \ref{thm_diag_local}.
	We retain the notation from Introduction and assume throughout the rest of the section that the measure $\mu$ does not give full mass to pluripolar subsets.
	
	\subsection{Diagonal Bergman measures put no mass on subvarieties}\label{sect_bergm_anal}
	\begin{sloppypar} 
	The primary objective of this section is to demonstrate that every subsequential limit of the diagonal Bergman measures vanishes on subvarieties, i.e. to show Theorem \ref{thm_anal_no_mass}. 
	Remark that when $\mu$ is Bernstein-Markov with non-pluripolar support, Theorem \ref{thm_anal_no_mass} follows from Theorem \ref{thm_diag}, as equilibrium measure puts no mass on pluripolar subsets (in particular on analytic subsets).
	In order to prove Theorem \ref{thm_anal_no_mass} in full generality, we consider the so-called partial Bergman kernels associated with $Y$, denoted by  $B_k^{\epsilon, Y}(x, y) \in L^k_x \otimes (L^k_y)^*$ for $\epsilon > 0$, and defined as
	\begin{equation}\label{eq_bergm_kern_part}
		B_k^{\epsilon, Y}(x, y) := \sum_{i = 1}^{n_k(\epsilon)} s_i(x) \cdot s_i(y)^*,
	\end{equation}
	where $s_i$, $i = 1, \ldots, n_k(\epsilon)$ is an orthonormal basis of $(H^0(X, L^{\otimes k} \otimes \mathcal{J}_{Y}^{\lceil \epsilon k \rceil}), {\textrm{Hilb}}_k(h^L, \mu))$, $n_k(\epsilon) := \dim H^0(X, L^{\otimes k} \otimes \mathcal{J}_{Y}^{\lceil \epsilon k \rceil})$, and $\mathcal{J}_{Y}$ is the sheaf of holomorphic sections vanishing along $Y$.
	The following result, out of independent interest, will be established in Section \ref{sect_forbid}.
	\end{sloppypar}
	\par 
	\begin{thm}\label{thm_forbid_region}
		For any $\epsilon, \delta > 0$, there is an open neighborhood $U$ of $Y$, so that for any $x \in U$, $k \in \nat$, we have $B_k^{\epsilon, Y}(x, x) \leq \exp(- \delta k)$.
	\end{thm}
	\begin{rem}
		When $\mu$ is a volume form, Theorem \ref{thm_forbid_region} follows from more precise estimates of Berman \cite[Lemma 4.1]{BermanEnvProj}.
	\end{rem}
	\begin{proof}[Proof of Theorem \ref{thm_anal_no_mass}]
		We denote by ${\rm vol}(L)$ the volume of a line bundle $L$ on $X$, defined by
		\begin{equation}\label{defn_vol}
			{\rm{vol}}(L) := \limsup_{k \to \infty} \frac{n!}{k^n} \dim H^0(X, L^{\otimes k}).
		\end{equation}
		By Fujita's theorem \cite{Fujita}, the above upper limit is actually a limit.
		The definition of the volume functional naturally extends to the $\mathbb{Q}$-line bundles, and the resulting functional extends continuously to the full Néron-Severi space $NS^1(X)$, cf. \cite[Theorem 2.2.44]{LazarBookI}.
		\par 
		\begin{sloppypar}
		Remark also that for any $l \in \nat$, there is an isomorphism 
		\begin{equation}\label{eq_isom_blow}
			H^0(\hat{X}, \pi^* L^{\otimes k} \otimes \mathscr{O}(- l \cdot E)) \to H^0(X, L^{\otimes k} \otimes \mathcal{J}_{Y}^l),
		\end{equation}
		where $\pi : \hat{X} \to X$ is the blow-up along the ideal sheaf $\mathcal{J}_{Y}$ of holomorphic functions vanishing along $Y$, and $\mathscr{O}_{\hat{X}}(-E) := \pi^* \mathcal{J}_{Y}$.
		In particular, the limit $\lim_{k \to \infty} \frac{n!}{k^n} \dim H^0(X, L^{\otimes k} \otimes \mathcal{J}_{Y}^{\lceil \epsilon' k \rceil})$ exists and can be interpreted as the volume (on $\hat{X}$) or the $\mathbb{R}$-line bundle $L \otimes \mathscr{O}_{\hat{X}}(-E)^{\epsilon'}$.
		\end{sloppypar}
		\par 
		From this and the continuity of the volume, for a given $\epsilon > 0$, we can find $\epsilon' > 0$, so that 
		\begin{equation}\label{eq_vol_cont}
			\liminf_{k \to \infty} \frac{\dim H^0(X, L^{\otimes k} \otimes \mathcal{J}_{Y}^{\lceil \epsilon' k \rceil})}{\dim H^0(X, L^{\otimes k})}
			\geq 1 - \frac{\epsilon}{2}.
		\end{equation}
		\par 
		By choosing $\epsilon := \epsilon'$ and $\delta := 1$ in Theorem \ref{thm_forbid_region}, we get a neighborhood $U$ of $Y$, so that for any $x \in U$, $k \in \nat$, we have 
		\begin{equation}\label{eq_exp_bnd}
				B_k^{\epsilon', Y}(x, x) \leq \exp(- k).
		\end{equation}
		By writing $B_k(x, x) = \big(B_k(x, x) - B_k^{\epsilon', Y}(x, x)\big) + B_k^{\epsilon', Y}(x, x)$, we obtain
		\begin{equation}\label{eq_int_berg_diag_coh110}
			\int_U B_k(x, x) \cdot d \mu(x) \leq \int_U \big(B_k(x, x) - B_k^{\epsilon', Y}(x, x)\big)   \cdot d \mu(x) + \exp(-k).
		\end{equation}
		Due to the positivity of the integrand, we have
		\begin{equation}
		 	\int_U \big(B_k(x, x) - B_k^{\epsilon', Y}(x, x)\big)   \cdot d \mu(x) 
		 	\leq
		 	 \int_X \big(B_k(x, x) - B_k^{\epsilon', Y}(x, x)\big)   \cdot d \mu(x) 
		\end{equation}
		Using the elementary identities
		\begin{equation}\label{eq_int_berg_diag_coh}
		\begin{aligned}
		& \int B_k(x, x) \cdot d\mu(x) = \dim H^0(X, L^{\otimes k}),
		\\ 
		& \int B_k^{\epsilon', Y}(x, x) \cdot d\mu(x) =  \dim H^0(X, L^{\otimes k} \otimes \mathcal{J}_{Y}^{\lceil \epsilon' k \rceil}),
		\end{aligned}
		\end{equation}
		together with (\ref{eq_vol_cont}), we however deduce
		\begin{equation}\label{eq_int_berg_diag_coh111}
			 \int_X \big(B_k(x, x) - B_k^{\epsilon', Y}(x, x)\big)   \cdot d \mu(x) 
			 \leq
			 \epsilon \cdot n_k.
		\end{equation}		 
		A combination of (\ref{eq_int_berg_diag_coh110}) and (\ref{eq_int_berg_diag_coh111}) yields (\ref{eq_anal_no_mass}).
	\end{proof}

	\subsection{Forbidden region for the partial Bergman kernels}\label{sect_forbid}
	The main goal of this section is to establish that the partial Bergman kernels decrease exponentially in the neighborhood of the subvariety, that is, to prove Theorem \ref{thm_forbid_region}.
	\par 
	To simplify the presentation, we claim that it suffices to establish Theorem \ref{thm_forbid_region} under the assumption that $\mathcal{J}_Y = \mathscr{O}_{\hat{X}}(-E)$ for some effective divisor $E$ on $X$.
	To see this, let $\pi : \hat{X} \to X$ denote the blow-up of $X$ along $\mathcal{J}_Y$, and let $\mathscr{O}_{\hat{X}}(-E) := \pi^* \mathcal{J}_Y$.  
	If $\hat{X}$ is not smooth, we perform further blow-ups to resolve its singularities.	
	By an abuse of notation, we denote the resolution by $\hat{X}$.
	Let $\hat{\mu}$ be a measure on $\hat{X}$ such that $\pi_* \hat{\mu} = \mu$.  
	Since $\mu$ does not give full mass to pluripolar subsets and $\pi$ is a biholomorphism outside an analytic (hence pluripolar) subset, the measure $\hat{\mu}$ does not give full mass to pluripolar subsets.
	The isomorphism (\ref{eq_isom_blow}) is then an isometry with respect to the induced $L^2$-norms.  
	In particular, the associated diagonal Bergman measures $\hat{\mu}_k^{\mathrm{Berg}, \Delta}$ on $\hat{X}$ and $\mu_k^{\mathrm{Berg}, \Delta}$ on $X$ satisfy the compatibility relation $\pi_* \hat{\mu}_k^{\mathrm{Berg}, \Delta} = \mu_k^{\mathrm{Berg}, \Delta}$.  
	It follows that it is enough to prove Theorem \ref{thm_forbid_region} on $\hat{X}$.
	This allows us to assume from now on that $\mathcal{J}_Y = \mathscr{O}_{\hat{X}}(-E)$ for some effective divisor $E$ on $X$.
	\par 
	Our proof of Theorem \ref{thm_forbid_region} will be based on the following three auxiliary statements.
	For the first one, we use the following notation: for a subset $K \subset X$, we denote by ${\textrm{Ban}}_k^{\infty}(K, h^L)$ the $\sup$-seminorm on $H^0(X, L^{\otimes k})$ induced by $h^L$ and evaluated over $K$. 
	For brevity, we denote ${\textrm{Ban}}_k^{\infty}(X, h^L)$ by ${\textrm{Ban}}_k^{\infty}(h^L)$.
	\begin{prop}\label{lem_asympt_bm}
		There is a continuous metric $h^L_0$ on $L$, so that for any $k \in \nat$, we have
		\begin{equation}\label{eq_asympt_bm}
			{\textrm{Hilb}}_k(h^L, \mu)
			\geq
			{\textrm{Ban}}_k^{\infty}(h^L_0).
		\end{equation}
	\end{prop}
	For non-pluripolar measures $\mu$, Proposition \ref{lem_asympt_bm} was established in \cite[Theorem~7.1]{FinEigToepl}, 
where the author also provided an explicit description of $h^L_0$ in terms of the non-negligible psh envelope from \cite{GuedjLuZeriahEnv}.  
	An analogous explicit description can be given in our setting when $\mu$ does not give full mass to pluripolar subsets, but we decided to omit the details here for brevity.
	\begin{proof}
		We work on the level of potentials instead of metrics. 
		Without loosing the generality, we can assume that $h^L$ is smooth.
		We then denote $\omega := 2 \pi c_1(L, h^L)$.
		We shall establish the following result: there is $C > 0$, such that for any $\psi \in {\rm{PSH}}(X, \omega)$, $p \geq 1$, 
		\begin{equation}\label{bm_weights}
			\sup_X \big( \exp(p \cdot \psi) \big)
			\leq
			\exp(C p)
			\cdot
			\int_X \exp(p \cdot \psi) d \mu.
		\end{equation}
		Once we establish (\ref{bm_weights}), by plugging in $\psi := \frac{1}{k} \log |s|_{h^{L^{\otimes k}}}$ and $p = 2 k$ in (\ref{bm_weights}), we would get (\ref{eq_asympt_bm}) for $h^L_0 := h^L \cdot \exp(- C)$.
		\par 
		We normalize $\mu$ so that it becomes a probability measure.
		For $p \in ]0, +\infty[$, we define
		\begin{equation}
			F_p(\psi) := \frac{1}{p} \log \int_X \exp(p \cdot \psi ) d \mu, 
			\quad 
			F(\psi) := \sup_X (\psi),
			\quad 
			\text{for any }\psi \in {\rm{PSH}}(X, \omega).
		\end{equation}
		The generalized mean inequality implies that $F_p$ is a non-decreasing function of $p$.
		Hence, to establish (\ref{bm_weights}), it would suffice to show that there is $C \in \real$, so that 
		\begin{equation}\label{eq_bnd_low}
			F_1(\psi) - F(\psi) \geq C, \quad \text{for any } \psi \in {\rm{PSH}}(X, \omega).
		\end{equation}
		\par 
		Remark now that $F_1 - F$ is clearly invariant by translation (adding a constant to the parameter), thus it descends on a function on the quotient space, which is isomorphic with the space of positive $(1, 1)$-currents lying in the cohomology class of $c_1(L)$, which we denote by $\mathcal{T}(X, \omega)$.
		The latter space is compact (in the weak topology of currents), cf. \cite[Proposition 8.5]{GuedjZeriahBook}.
		\par 
		The functional $F$ is continuous by Hartogs' lemma, cf. \cite[Theorem 1.46]{GuedjZeriahBook}.
		Note that by \cite[Lemma 1.14]{BerBoucNys}, the functional $F_1$ is also continuous if the measure $\mu$ is non-pluripolar.
		In this case the bound (\ref{eq_bnd_low}) is immediate, since continuous functions are bounded on compact sets.
		In the general setting, we shall adapt the proof of \cite[Lemma 1.14]{BerBoucNys} to obtain the desired lower bound.
		\par 
		Assume -- for the sake of contradiction -- that there exists a sequence of elements $u_i \in \mathcal{T}(X,\omega)$ such that $F_1(u_i) - F(u_i) \to -\infty$, or, equivalently, 
		\begin{equation}\label{eq_conv_exp_0}
			\int_X \exp(u_i) d\mu \to 0,
		\end{equation}
		where -- by normalizing -- we view $u_i$ as a sequence in ${\rm{PSH}}(X, \omega)$ satisfying $F(u_i)=0$.  
		Recall that Hartogs' lemma states that, up to passing to a subsequence, there exists $u \in {\rm PSH}(X,\omega)$ such that $u_i \to u$ in $L^1(X)$, and moreover 
		\begin{equation}\label{eq_hart_pp_e}
			\limsup_{i \to \infty} u_i = u \quad \text{outside of a pluripolar subset } E.
		\end{equation}
		By taking again a subsequence, we can further assume that $u_i$ converge to $u$ Lebesgue almost everywhere, as $i \to \infty$.
		\par 
		By repeating the functional-analytic argument of \cite[Lemma 1.14]{BerBoucNys}, based only on the Banach-Alaoglu theorem, we may, for each $i \in \nat$, find a convex combination
		\begin{equation}\label{eq_convex_combination}
    		v_i := \sum_{j \in I_i} t_{i,j} \cdot \exp(u_j),
		\end{equation}
		where $I_i \subset \{i, i+1, \ldots\}$ is a finite subset, such that $v_i \to v$ in $L^2(\mu)$ for some $v$.  
		Hence, upon passing to a subsequence, there exists a Borel subset $F \subset X$ with $\mu(F)=0$ 
such that $v_i \to v$ pointwise on $X \setminus F$.
		By (\ref{eq_conv_exp_0}) and (\ref{eq_convex_combination}), we then conclude that $v = 0$ on $X \setminus F$.
		\par 
		At the same time, since $u_i$ converge to $u$ Lebesgue almost everywhere, we see that $v_i$ converge to $\exp(u)$ Lebesgue almost everywhere.
		By the Hartogs' lemma, we conclude that outside of a pluripolar subset $H \subset X$, $\log(v_i)$ converges towards $u$ (this is applicable since standard properties of plurisubharmonic functions ensure that $\log(v_i) \in {\rm{PSH}}(X, \omega)$). 
		We note $G := \{ x \in X : u(x) = - \infty \}$.
		By (\ref{eq_hart_pp_e}) and the above remark, we deduce that $X \setminus F \subset G \cup E \cup H$.
		Note that $G \cup E \cup H$ is a pluripolar subset verifying $\mu(G \cup E \cup H) \geq \mu(X \setminus F) = \mu(X)$, contradicting our initial assumption that $\mu$ does not give full mass to pluripolar subsets.
	\end{proof}
	We now decompose the divisor $E$ into irreducible components $E = \sum_{i = 1}^{r} a_i E_i$, $a_i \in \nat$. 
	The next lemma is a version of a well-known tautological maximal principle for the $L^{\infty}$-norm.
	The only minor difference is that we apply it for sections which are vanishing along $E$.
	\par 
	For this, recall that for a psh function $\phi$, defined in a neighborhood of $a \in \comp^n$, the Lelong number $\nu_a(\phi)$ of $\phi$ at $a$ is defined as 
	\begin{multline}
		\nu_a(\phi) = \sup \Big\{ \gamma \geq 0 : \text{ there are $C_{\gamma} > 0$, and an open subset $U_{\gamma} \subset \comp^n$, $a \in U_{\gamma}$,} \\ \text{so that } \phi(z) \leq \gamma \cdot \log \|z - a\| + C_{\gamma} \text{ for any } z \in U_{\gamma} \Big\}.
	\end{multline}
	For an irreducible subvariety $Z \subset \comp^n$, we define the generic Lelong number of $\phi$ along $Z$ as 
	\begin{equation}
		\nu_Z(\phi) = \inf_{x \in Z} \nu_x(\phi).
	\end{equation}
	We now define the following envelope
	\begin{equation}\label{eq_env_lelong}
		\phi^{\epsilon, E} := \sup \Big\{ \phi \in {\textrm{PSH}}(X, \omega) : \phi \leq 0 \text{, and } \nu_{E_i}(\phi) \geq \epsilon a_i, \text{ for any $i = 1, \ldots, r$} \Big\}.
	\end{equation}
	\begin{lem}\label{lem_max_principl}
		The norms ${\textrm{Ban}}_k^{\infty}(h^L)$ and ${\textrm{Ban}}_k^{\infty}(h^L \cdot \exp(-\phi^{\epsilon, E}))$ coincide when restricted to the subspace $H^0(X, L^{\otimes k} \otimes \mathscr{O}_X(- \lceil \epsilon k E \rceil)) \subset H^0(X, L^{\otimes k})$.
	\end{lem}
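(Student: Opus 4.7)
The plan is to use the tautological maximum principle: to each nonzero section $s$ in $H^0(X, L^{\otimes k} \otimes \mathscr{O}_X(-\lceil \epsilon k E\rceil))$ I attach a normalized logarithmic potential $\phi_s$, verify that $\phi_s$ is a valid competitor in the supremum defining $\phi^{\epsilon, E}$, and read off both inequalities between the two sup norms from the resulting pointwise bound $\phi_s \leq \phi^{\epsilon, E}$ together with the sign condition $\phi^{\epsilon, E} \leq 0$.

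Explicitly, I would set $M_s := \sup_{x \in X} |s(x)|^2_{(h^L)^k}$ and $\phi_s := \frac{1}{k}\log(|s|^2_{(h^L)^k}/M_s)$. Tautologically $\phi_s \leq 0$. Combining $\omega = 2\pi c_1(L, h^L)$ with the Lelong-Poincaré formula $\imun \partial \dbar \log|s|^2_{(h^L)^k} = 2\pi [{\rm div}(s)] - k \omega$ gives $\omega + \imun \partial \dbar \phi_s = \frac{2\pi}{k}[{\rm div}(s)] \geq 0$, so $\phi_s \in {\textrm{PSH}}(X, \omega)$. Since $s$ vanishes to order at least $\lceil \epsilon k\rceil a_i$ along $E_i$, the Lelong number of $\phi_s$ along $E_i$ equals $\frac{2}{k}{\rm ord}_{E_i}(s) \geq 2 \epsilon a_i \geq \epsilon a_i$. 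Hence $\phi_s$ is a competitor in (\ref{eq_env_lelong}), so by the definition of the supremum, $\phi_s \leq \phi^{\epsilon, E}$ pointwise on $X$.

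Rewriting this pointwise inequality as $|s|^2_{(h^L)^k} \cdot e^{-k\phi^{\epsilon, E}} \leq M_s$ and taking the supremum yields
\[
{\textrm{Ban}}_k^{\infty}(h^L \exp(-\phi^{\epsilon, E}))(s) \leq {\textrm{Ban}}_k^{\infty}(h^L)(s),
\]
which is the nontrivial direction. The reverse inequality is immediate from $\phi^{\epsilon, E} \leq 0$, valid since every competitor in (\ref{eq_env_lelong}) is nonpositive, giving $e^{-k\phi^{\epsilon, E}} \geq 1$ pointwise and therefore the twisted sup norm dominates the untwisted one.

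No step should be a serious obstacle; the only delicate point to track is that the naive pointwise supremum in (\ref{eq_env_lelong}) may fail to be upper semicontinuous. As is standard in pluripotential theory, I would pass to its upper semicontinuous regularization, which inherits both the nonpositivity and the Lelong number bounds and agrees with the naive sup outside a pluripolar set, so the argument above is unaffected. The factor-of-two margin in the Lelong number comparison $2\epsilon a_i \geq \epsilon a_i$ provides some slack but is otherwise unused.
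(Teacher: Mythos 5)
Your proof is correct and follows essentially the same route as the paper: build the normalized logarithmic potential of the section, check via Poincaré--Lelong that it is an admissible competitor for the envelope $\phi^{\epsilon, E}$, and translate the resulting pointwise bound into the norm identity, with the easy direction coming from $\phi^{\epsilon, E} \leq 0$. The only cosmetic difference is your normalization by $|s|^2$ rather than $|s|$ (whence the harmless factor-of-two slack in the Lelong numbers), and your aside on upper semicontinuous regularization, which is not needed here since only the inequality $\phi_s \leq \phi^{\epsilon,E}$ against the naive supremum is used.
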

	\begin{proof}
		Note first that since $\phi^{\epsilon, E} \leq 0$, we immediately get the inequality ${\textrm{Ban}}_k^{\infty}(h^L) \leq {\textrm{Ban}}_k^{\infty}(h^L \cdot \exp(-\phi^{\epsilon, E}))$.
		To prove Lemma \ref{lem_max_principl}, it remains to establish the reverse inequality.
		\par 
		For a given section $s \in H^0(X, L^{\otimes k} \otimes \mathscr{O}_X(- \lceil \epsilon k E \rceil)) \setminus \{ 0 \}$, we consider the function $\phi(x) := \frac{1}{k} \log |s(x)|_{(h^L)^k} -  \frac{1}{k} \log \| s \|_{{\textrm{Ban}}_k^{\infty}(h^L)}$.
		By the Poincaré-Lelong formula, $\phi$ is one of the admissible candidates in the supremum defining (\ref{eq_env_lelong}).
		The obvious inequality $\phi(x) \leq \phi^{\epsilon, E}(x)$ is simply a restatement of the pointwise bound $|s(x)|_{(h^L \cdot \exp(- \phi^{\epsilon, E}))^k} \leq \| s \|_{{\textrm{Ban}}_k^{\infty}(h^L)}$.
		Taking the supremum over $x \in X$ in the above inequality yields the desired conclusion.
	\end{proof}
	\par 
	For the last auxiliary result, we denote by $s_{E_i} \in H^0(X, \mathscr{O}_X(E_i))$ the canonical holomorphic section verifying ${\textrm{div}}(s_{E_i}) = E_i$.
	We endow each line bundle $\mathscr{O}_X(E_i)$, $i = 1, \ldots, r$, with a smooth metric $h^{E_i}$, and denote by $x \mapsto |s_{E_i}(x)|$ the norms of the associated sections.
	\begin{lem}\label{lem_sing_env}
		Assume that $\epsilon > 0$ is small enough so that $c_1(L) - \epsilon \cdot \sum_{i = 1}^{r} a_i \cdot c_1(\mathscr{O}_X(E_i))$ is a big class (i.e. containing a strictly positive $(1, 1)$-current).
		Then there is $C > 0$ so that for any $x \in X$,
		\begin{equation}
			\phi^{\epsilon, E}(x) \leq \epsilon \cdot \sum_{i = 1}^{r} a_i \cdot \log |s_{E_i}(x)| + C.
		\end{equation}
	\end{lem}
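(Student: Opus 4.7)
For each admissible function $\phi$ in the family defining $\phi^{\epsilon, E}$, the plan is to introduce the shifted function
$$\psi_\phi := \phi - \epsilon \sum_{i = 1}^{r} a_i \log|s_{E_i}|$$
and to establish two properties: (a) after upper semi-continuous regularization, $\psi_\phi$ is $\omega_\alpha$-psh on $X$, where $\omega_\alpha$ is a smooth $(1,1)$-form representing (a positive multiple of) the big class $\alpha := c_1(L) - \epsilon \sum_i a_i c_1(\mathscr{O}_X(E_i))$; (b) $\sup_X \psi_\phi \leq C$ for a constant $C$ independent of $\phi$. Rearranging $\psi_\phi \leq C$ and taking the supremum over $\phi$ yields the claimed bound on $\phi^{\epsilon, E}|_{X \setminus E}$, while on $E$ the inequality holds trivially: any admissible $\phi$ has $\nu_x(\phi) > 0$ for $x \in E$, which forces $\phi(x) = -\infty$.

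For (a), I would write the Poincaré-Lelong identity $\imun \partial \dbar \log|s_{E_i}| = c \cdot [E_i] - \theta_i$ for a smooth form $\theta_i$ in the appropriate normalization of $c_1(\mathscr{O}_X(E_i))$ and a positive constant $c$. The Lelong number hypothesis $\nu_{E_i}(\phi) \geq \epsilon a_i$ combined with Siu's decomposition theorem applied to the positive closed current $\omega + \imun \partial \dbar \phi$ gives the companion inequality $\omega + \imun \partial \dbar \phi \geq c \epsilon \sum_i a_i [E_i]$ as currents. Subtracting $\epsilon a_i$ copies of Poincaré-Lelong then produces $\omega_\alpha + \imun \partial \dbar \psi_\phi \geq 0$ as currents on $X$, where $\omega_\alpha := \omega - \epsilon \sum_i a_i \theta_i$. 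Hence the usc representative of $\psi_\phi$ lies in $\textrm{PSH}(X, \omega_\alpha)$.

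For (b), pick $\delta > 0$ small enough that the compact set $K_\delta := \{x \in X : |s_{E_i}(x)| \geq \delta \text{ for every } i\}$ has non-empty interior, and is in particular non-pluripolar. Using $\phi \leq 0$, one obtains $\psi_\phi \leq -\epsilon \sum a_i \log \delta =: M$ on $K_\delta$, so that $\psi_\phi - M$ is $\omega_\alpha$-psh and $\leq 0$ on $K_\delta$, and is therefore dominated on all of $X$ by the Siciak-Zahariuta extremal function $V_{K_\delta, \omega_\alpha} := \sup\{\eta \in \textrm{PSH}(X, \omega_\alpha) : \eta \leq 0 \text{ on } K_\delta\}$. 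Since $\alpha$ is big and $K_\delta$ is non-pluripolar, $\sup_X V_{K_\delta, \omega_\alpha} < +\infty$; this is the analog for big classes of \cite[Theorem 9.17]{GuedZeriGeomAnal}, a fact used implicitly already in the proof of Lemma \ref{lem_asympt_bm}. This yields the uniform bound $\psi_\phi \leq M + \sup_X V_{K_\delta, \omega_\alpha} =: C$, which is exactly property (b).

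The principal obstacle is step (b): the finiteness of $\sup_X V_{K_\delta, \omega_\alpha}$ is precisely where the bigness of $\alpha$ enters decisively, since the corresponding envelope can diverge for merely pseudo-effective classes. The current-level computation in step (a) is routine once the normalizations of Poincaré-Lelong and Siu's decomposition are matched so that the $[E_i]$-contributions cancel.
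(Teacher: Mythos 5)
Your proposal is correct and follows essentially the same route as the paper: the paper cites Berman's identity \cite[Lemma 4.1]{BermanEnvProj} for the fact that the shifted function is $\omega_\epsilon$-psh (your step (a) simply reproves that lemma via Poincar\'e--Lelong and Siu's decomposition), and then bounds the resulting envelope by comparing it with the extremal function of a non-pluripolar compact set disjoint from $E$, exactly as in your step (b). No gaps.
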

	\begin{proof}
		Recall that Berman in \cite[Lemma 4.1]{BermanEnvProj} established the following identity
		\begin{equation}\label{eq_berm_ident}
			\phi^{\epsilon, E} - \epsilon \cdot \sum_{i = 1}^{r} a_i \cdot \log |s_{E_i}|
			=
			\phi^{\epsilon, E}_{0}, 
		\end{equation}
		where $\phi^{\epsilon, E}_{0}$ admits for $\omega_{\epsilon} := \omega - \epsilon \cdot 2 \pi \cdot \sum_{i = 1}^{r} a_i \cdot c_1(\mathscr{O}_X(E_i), h^{E_i})$ the following description
		\begin{equation}
			\phi^{\epsilon, E}_{0} := \sup \Big\{ \phi \in {\textrm{PSH}}(X, \omega_{\epsilon}) : \phi \leq - \epsilon \cdot \sum_{i = 1}^{r} a_i \cdot \log |s_{E_i}| \Big\}.
		\end{equation}
		Note that in \cite[Lemma 4.1]{BermanEnvProj}, it was assumed that $E$ is irreducible, but the proof holds without this assumption. 
		Note that our assumption on $\epsilon > 0$ implies that ${\textrm{PSH}}(X, \omega_{\epsilon})$ is non-empty.
		\par 
		Now, let $K$ be an arbitrary non-pluripolar subset not intersecting $E$ (for example, a small ball in some local coordinates around $x \notin E$).
		Then for $C := \sup_{x \in K} ( - \epsilon \cdot \sum_{i = 1}^{r} a_i \cdot \log |s_{E_i}(x)| )$, we have $C < +\infty$. 
		Let us now define $\phi_K := \sup \{ \phi \in {\textrm{PSH}}(X, \omega_{\epsilon}) : \phi \leq C \text{ on } K \}$.
		The non-pluripolarity of $K$ and non-emptiness of ${\textrm{PSH}}(X, \omega_{\epsilon})$ assure that $\phi_K$ is uniformly bounded from above, cf. \cite[Theorem 9.17]{GuedZeriGeomAnal}.
		The bound of Lemma \ref{lem_sing_env} follows immediately from this, (\ref{eq_berm_ident}) and the obvious bound $\phi^{\epsilon, E}_{0} \leq \phi_K$.
	\end{proof}
	With all these preliminaries, we can finally complete our proof.
	\begin{proof}[Proof of Theorem \ref{thm_forbid_region}.]
		Note first that it suffices to establish the result for $\epsilon > 0$ small enough.
		From now on, we choose $\epsilon$ as in Lemma \ref{lem_sing_env}.
		Remark first the following characterization of the partial Bergman kernel
		\begin{equation}\label{eq_bergman_max}
			B_k^{\epsilon, Y}(x, x)
			=
			\sup \frac{|s(x)|_{(h^L)^k}^2}{\|s\|_{{\textrm{Hilb}}_k(h^L, \mu)}^2},
		\end{equation}
		where the supremum is taken over all $s \in H^0(X, L^{\otimes k} \otimes \mathscr{O}_X(- \lceil \epsilon k E \rceil)) \setminus \{0\}$.
		Now, according to Proposition \ref{lem_asympt_bm}, there is a continuous metric $h^L_0$ on $L$, so that for any $k \in \nat$, we have
		\begin{equation}\label{eq_bergman_max1}
			\|s\|_{{\textrm{Hilb}}_k(h^L, \mu)} 
			\geq
			\|s\|_{{\textrm{Ban}}_k^{\infty}(h^L_0)}.
		\end{equation}
		By Lemma \ref{lem_max_principl}, for any $s \in H^0(X, L^{\otimes k} \otimes \mathscr{O}_X(- \lceil \epsilon k E \rceil)) \setminus \{0\}$, we have
		\begin{equation}\label{eq_bergman_max2}
			\|s\|_{{\textrm{Ban}}_k^{\infty}(h^L_0)}
			=
			\|s\|_{{\textrm{Ban}}_k^{\infty}(h^L_0 \cdot \exp(-\phi^{\epsilon, E}_0))},
		\end{equation}
		where $\phi^{\epsilon, E}_0$ is defined analogously to (\ref{eq_env_lelong}) where we take a reference metric $h^L_0$ instead of $h^L$.
		Note, however, that by Lemma \ref{lem_sing_env}, there is a neighborhood $U$ of $Y$ so that 
		\begin{equation}\label{eq_bergman_max22}
			{\textrm{Ban}}_k^{\infty}(h^L_0 \cdot \exp(-\phi^{\epsilon, E}_0))
			\geq 
			{\textrm{Ban}}_k^{\infty}(U, \exp(\delta) \cdot h^L).
		\end{equation}
		By the definition of the $L^{\infty}$-norm, for any $x \in U$, we have
		\begin{equation}\label{eq_bergman_max3}
			\|s\|_{{\textrm{Ban}}_k^{\infty}(U, e \cdot h^L)}
			\geq
			\exp(k \delta) \cdot |s(x)|_{(h^L)^k}.
		\end{equation}
		Hence, from (\ref{eq_bergman_max})-(\ref{eq_bergman_max3}), we conclude.
	\end{proof}
	
	\subsection{Diagonal Bergman measures put no mass on closed pluripolar subsets}\label{sect_bergm_pp}
	The main result of this section refines Theorem \ref{thm_anal_no_mass} by showing that any subsequential limit of diagonal Bergman measures assigns no mass to pluripolar subsets.  
	Although this result is not needed in the subsequent sections, we have included it because the proof may be of independent interest, and it highlights a natural question discussed in the end of the section.
	\par 
	\begin{thm}\label{thm_pp_no_mass}
		For any Borel measure $\mu$ which does not give full mass to pluripolar subsets, any closed pluripolar subset $E \subset X$ and any $\epsilon > 0$, there exists an open neighborhood $U$ of $E$ and $k_0 \in \nat$, so that for any $k \geq k_0$, we have
		\begin{equation}\label{eq_pp_no_mass}
			\int_U \mu_k^{\rm{Berg}, \Delta} \leq \epsilon.
		\end{equation}
	\end{thm}
	From Theorem \ref{thm_pp_no_mass}, we obtain the following result.
	\begin{cor}\label{cor_nomalss}
		Any subsequential limit of $\mu_k^{\rm{Berg}, \Delta}$ doesn't place mass on pluripolar subsets.
	\end{cor}
	Before proceeding with the proof of Corollary \ref{cor_nomalss}, we note that Theorem \ref{thm_pp_no_mass} immediately implies that any subsequential limit of $\mu_k^{\rm{Berg}, \Delta}$ doesn't place mass on $F_{\sigma}$-pluripolar subsets, that is, to sets which can be expressed as a countable union of closed subsets.
	Note, however, that not every complete pluripolar subset is an $F_{\sigma}$, as illustrated by the following example kindly communicated to us by Franck Wielonsky.
	\par 
	\textbf{Example}.
	Let $C \subset \mathbb{C}$ be a polar Cantor set, and let $D \subset C$ be a dense countable subset. Consider the set $E := C \setminus D$. 
	We claim that $E$ is complete polar (i.e., there exists a subharmonic function $u$ such that $E = \{x : u(x) = -\infty\}$) and that it is not $F_\sigma$.  
	First, since $C$ is closed, it is also a $G_\delta$ set, i.e. a countable intersection of open sets. 
	Similarly, the complement of the countable set $D$, $\mathbb{C} \setminus D = \cap_{p \in D} (\mathbb{C} \setminus \{p\})$, is a $G_\delta$ set. Therefore, $E = C \cap (\mathbb{C} \setminus D)$ is also a $G_\delta$ set. 
	Moreover, $E$ is polar, since it is contained in the polar set $C$. By Deny's theorem \cite{DenyThmGDelta} (cf. \cite[\S 3.5]{Ransford}), it follows that $E$ is complete polar.  
	To see that $E$ is not $F_\sigma$, suppose for contradiction that it is. Then we can write $E = \bigcup_{n=1}^\infty F_n$, where $F_n$ are closed.
	Since $D$ is dense in $C$, each $F_n \subset E$ is nowhere dense in $C$. 
	Also since $C$ is a perfect set, every point of it is nowhere dense in it.
	Consequently, $C$ would be covered by the closed nowhere dense sets $F_n$ together with the points of $D$, giving a countable union of closed nowhere dense sets covering $C$. 
	This contradicts Baire's theorem, as $C$ is a complete metric space. Hence, $E$ cannot be $F_\sigma$.
	We also note that higher-dimensional analogues of the above phenomenon can be obtained by considering the sets $E \times \mathbb{C}^r \subset \mathbb{C}^{r+1}$.
	These sets are complete pluripolar, since $E$ is complete polar, but they are not $F_\sigma$, as the property of not being $F_\sigma$ is preserved under taking products with $\mathbb{C}^r$.
	\par 
	\begin{proof}[Proof of Corollary \ref{cor_nomalss}]
		Let us consider a subsequential limit of $\mu_k^{\rm{Berg}, \Delta}$, which we denote by $\nu$.
		Being a limit of Radon measures, $\nu$ is Radon.
		In particular, it is inner regular, see \cite[Definition 1.9 and Theorems 1.8, 1.39, 1.41]{EvansGariepy}.
		Note that any complete pluripolar subset $E$ is a $G_\delta$ set (in fact, if $E = \{ x \in X : \psi(x) = - \infty \}$, where $\psi$ is a (local) psh function, then $E = \cap_{n \in \nat} \{ x \in X : \psi(x) < -n \}$).
		In particular, $E$ is Borel.
		By inner regularity of $\nu$, we deduce 
		\begin{equation}\label{eq_innreg}
			\nu(E) = \sup \Big\{ \nu(K) : K \text{ compact}, K \subset E \Big\}.
		\end{equation}
		\par  
		Note, however, that any $K$ as above, being a subset of $E$, is also pluripolar. 
		Hence, we have $\nu(K) = 0$ by Theorem \ref{thm_pp_no_mass}. 
		But then by (\ref{eq_innreg}), we have $\nu(E) = 0$.
		This shows that any subsequential limit of $\mu_k^{\rm{Berg}, \Delta}$ doesn't place mass on complete pluripolar subsets.
		As an arbitrary pluripolar subset is a subset of a complete pluripolar subsets, it finishes the proof.
	\end{proof}
	
	Despite the similarity between the statements of Theorems \ref{thm_anal_no_mass} and \ref{thm_pp_no_mass}, their proofs differ drastically, as the analogue of (\ref{eq_vol_cont}) doesn't hold when $\mathcal{J}_{Y}$ is replaced by the sheaf of holomorphic sections vanishing along $E$ (for instance, consider the case when $E$ is an infinite set of points with a single accumulation point).
	The following statement remedies this problem.
	\begin{prop}\label{prop_ban_inf_2hl}
		For any $\epsilon, \delta > 0$, there is $k_0 \in \nat$, so that for any $k \geq k_0$, there is a vector subspace $E_k \subset H^0(X, L^{\otimes k})$, verifying 
		\begin{equation}\label{eq_dim_ek_big}
			\frac{\dim E_k}{\dim H^0(X, L^{\otimes k})} \geq (1 - \epsilon),
		\end{equation}
		and an open neighborhood $U$ of $E$ so that we have
		\begin{equation}\label{eq_ban_inf_2hl}
			{\textrm{Hilb}}_k(h^L, \mu)
			\geq 
			{\textrm{Ban}}_k^{\infty}(U, \exp(\delta) \cdot h^L), \quad \text{when restricted to } E_k.
		\end{equation}
	\end{prop}
	Once the above statement is established, the proof of Theorem \ref{thm_pp_no_mass} proceeds along the same lines as the proof of Theorem \ref{thm_anal_no_mass}. 
	More specifically, instead of the subspace $H^0(X, L^{\otimes k} \otimes \mathcal{J}_{Y}^{\lceil \epsilon' k \rceil})$ in Section \ref{sect_bergm_anal}, one uses $E_k$: the continuity of the volume function from (\ref{eq_vol_cont}) is then replaced by (\ref{eq_dim_ek_big}).
	The existence of the “forbidden region" for the partial Bergman kernel associated with the subspace $E_k$ is established as in Theorem \ref{thm_forbid_region}, where (\ref{eq_bergman_max2}) and (\ref{eq_bergman_max22}) are replaced by (\ref{eq_ban_inf_2hl}). Apart from this adjustment, the proof carries over verbatim, and we omit the details.
	\par 
	The proof of Proposition \ref{prop_ban_inf_2hl} will be based on the calculation of volumes of certain balls on the cohomology $H^0(X, L^{\otimes k})$.
	To explain this, we first introduce some preliminary notions.
	We fix a smooth Hermitian metric $h^L_0$ on $L$ and denote by $\omega := 2 \pi c_1(L, h^L_0)$.
	For any bounded $\phi : X \to \real$, we define the psh envelope as
	\begin{equation}\label{defn_env_sic}
		P(\phi)
		:=
		\sup \Big\{
			\phi_0 \in {\textrm{PSH}}(X, \omega) : \phi_0 \leq \phi
		\Big\}.
	\end{equation}
	For $h^L := h^L_0 \cdot \exp(- \phi)$, we denote $P(h^L) := h^L_0 \cdot \exp(-P(\phi))$, and let $P(h^L)_*$ be the lower-semicontinuous regularization of $P(h^L)$, given by $h^L_0 \cdot \exp(-P(\phi)^*)$ where $P(\phi)^*$ is the upper-semicontinous regularization of $\phi$.
	It is classical that $P(\phi)^* \in {\textrm{PSH}}(X, \omega)$.
	\par 
	We define the \textit{logarithmic relative spectrum} of a norm $N_1$ on a finitely dimensional vector space $V$, $\dim V =: r$, with respect to another norm $N_2$ on $V$, as a non-increasing sequence $\lambda_j := \lambda_j(N_1, N_2) \in \real$, $j = 1, \cdots, r$, defined so that
	\begin{equation}\label{eq_log_rel_spec}
		\lambda_j
		:=
		\sup_{\substack{W \subset V \\ \dim W = j}} 
		\inf_{w \in W \setminus \{0\}} \log \frac{\| w \|_2}{\| w \|_1}.
	\end{equation}
	For $p \in [1, + \infty[$, we then let
	\begin{equation}\label{eq_dp_defn_norms}
		d_p(N_1, N_2) := \sqrt[p]{\frac{\sum_{i = 1}^{r} |\lambda_i|^p}{r}}.
	\end{equation} 
	Note that one can alternatively define 
	\begin{equation}\label{eq_log_rel_spec0}
		\lambda_{j, 0}
		:=
		\inf_{\substack{W \subset V \\ {\textrm{codim}} W = j - 1}} 
		\sup_{w \in W \setminus \{0\}} \log \frac{\| w \|_2}{\| w \|_1},
	\end{equation}
	and $d_{p, 0}(N_1, N_2)$ as in (\ref{eq_dp_defn_norms}) with $\lambda_{j, 0}$ in place of $\lambda_j$.
	Remark that if both $N_1$ and $N_2$ are Hermitian, $\lambda_j$ and $\lambda_{j, 0}$ coincide.
	We claim that for arbitrary norms, we have 
	\begin{equation}\label{eq_dp_same}
		\big| 
		d_{p, 0}(N_1, N_2) - d_{p}(N_1, N_2)
		\big|
		\leq
		2 \log \dim V.
	\end{equation}
	\par
	Indeed, recall that John ellipsoid theorem, cf. \cite[\S 3]{PisierBook}, says that for any normed vector space $(V, N_V)$, there is a Hermitian norm $H_V$ on $V$, verifying 
	\begin{equation}\label{eq_john_ellips}
		H_V \leq N_V \leq \sqrt{\dim V} \cdot H_V.
	\end{equation}
	We then see that (\ref{eq_dp_same}) follows from this, Minkowski inequality and the fact that for Hermitian $N_1, N_2$, we have $d_{p, 0}(N_1, N_2) = d_{p}(N_1, N_2)$.
	\par 
	Following established tradition, we shall use $d_p(N_1, N_2)$ throughout our subsequent analysis; however, thanks to the bound (\ref{eq_dp_same}), all our results will also hold for $d_{p, 0}(N_1, N_2)$.
	The advantage of $d_{p, 0}(N_1, N_2)$ is that immediately from definitions, we see the following: assume $N_1$ and $N_2$ are such that $d_{p, 0}(N_1, N_2) \leq \epsilon \cdot c$ for some $\epsilon, c > 0$, then there is a vector subspace $E \subset V$ verifying 
	\begin{equation}\label{eq_dp_reform_subsp}
		\dim E \geq (1 - \epsilon) \dim V, \qquad N_1 \geq \exp(-2 c) \cdot N_2, \quad \text{when restricted to } E.
	\end{equation}
	Indeed, for $j := \lceil \epsilon \dim V \rceil$, we see that $\lambda_{j, 0} \leq c$.
	Hence, if one takes a subspace $E$ close to realizing infimum from (\ref{eq_log_rel_spec0}), it would verify both statements of (\ref{eq_dp_reform_subsp}).
	\par 
	\begin{sloppypar}
	We say that the graded norms $N = \oplus_{k = 0}^{\infty} N_k$ and $N' = \oplus_{k = 0}^{\infty} N_k'$ on $R(X, L) := \oplus_{k = 0}^{+ \infty} H^0(X, L^{\otimes l})$ are $p$-\textit{equivalent} ($N \sim_p N'$) if 
 	\begin{equation}\label{defn_equiv_relp}
		\frac{1}{k} d_p(N_k, N_k') \to 0, \qquad \text{as } k \to \infty.
	\end{equation}
	In \cite[\S 2.3]{FinNarSim}, we established that $\sim_p$, $p \in [1, +\infty[$, is an equivalence relation.
	\end{sloppypar}
	\par 
	The above equivalence relation can be reformulated in the language of volumes of balls associated with the fixed norms.
	To state this precisely, on a finitely-dimensional Hermitian vector space $(V, H)$, endowed with a norm $N_0$ on $V$, we denote by ${\rm{vol}} (N)$ the volume of the unit ball of $N$ (calculated with respect to the volume element associated with $H$).
	Note that while ${\rm{vol}} (N)$ clearly depends on the choice of $H$, if we fix another norm $N_1$ on $V$, the change of variables formula implies that the difference $\log {\rm{vol}} (N_0) - \log {\rm{vol}} (N_1)$ is independent of the choice of $H$.
	\begin{prop}[{\cite[Proposition 5.4]{FinEigToepl}}]\label{prop_d_p_and_vol}
		For an arbitrary sequence of norms $N = \oplus_{k = 0}^{\infty} N_k$ and $N' = \oplus_{k = 0}^{\infty} N_k'$ on $R(X, L)$, we have $a) \Rightarrow b)$ in the following statements
		\begin{enumerate}[a)]
			\item We have $N \sim_1 N'$.
			\item We have \begin{equation}
				\lim_{k \to \infty}
		\frac{\big|
		\log {\rm{vol}} (N_k) - \log {\rm{vol}} (N_k')
		\big|}{k \cdot \dim H^0(X, L^{\otimes k})}
		=
		0.
			\end{equation}
		\end{enumerate}
		If we have $N_k \geq N_k'$ for any $k \in \nat$, then we also have $b) \Rightarrow a)$.
		If there is $c > 0$, so that for any $k \in \nat^*$, we have $\exp(-c k) \cdot N_k' \leq N_k \leq \exp(c k) \cdot N_k'$, then the condition $a)$ above is equivalent to
		\begin{enumerate}[a)]
			\setcounter{enumi}{2}
			\item We have $N \sim_p N'$ for any $p \in [1, +\infty[$.
		\end{enumerate}
	\end{prop}
	\par 
	We fix now a smooth $(1, 1)$-form $\omega$.
	For any smooth function $u : X \to \real$, we define $\omega_u := \omega + \imun \partial \dbar u$.
	Recall that the \textit{Monge-Ampère energy functional} $\mathscr{E}$ on the space of smooth functions $u, v : X \to \real$ is defined (up to a universal constant) so that
	\begin{equation}\label{eq_energy}
		\mathscr{E}(u) - \mathscr{E}(v)
		=
		\frac{1}{(n + 1) \int_X \omega^n}
		\sum_{j = 0}^{n} \int_X (u - v) w_u^j \wedge w_v^{n - j}.
	\end{equation}
	\par 
	Using the definition of Bedford-Taylor for the powers $ w_u^j \wedge w_v^{n - j}$, one can extend the above definition to bounded $u, v \in {\rm{PSH}}(X, \omega)$.
	When the cohomological class $[\omega]$ of $\omega$ is merely big, there might be no bounded functions in ${\rm{PSH}}(X, \omega)$.
	Nevertheless, the above definition can still be extended if instead one uses the non-pluripolar products of currents due to Boucksom-Eyssidieux-Guedj-Zeriahi \cite{BEGZ}.
	Then $\mathscr{E}$ is monotone, i.e. for any $u \leq v$, we have $\mathscr{E}(u) \leq \mathscr{E}(v)$, cf. \cite[Proposition 10.14]{GuedjZeriahBook}.
	Moreover, the following theorem due to Berman-Boucksom \cite{BermanBouckBalls} holds.
	\begin{thm}\label{thm_balls}
		For any continuous metrics $h^L_0$, $h^L_1$ on $L$, the following identity holds
		\begin{equation}\label{eq_balls}
			\lim_{k \to \infty} \frac{\log {\rm{vol}} ({\rm{Ban}}_k(h^L_0)) - \log {\rm{vol}} ({\rm{Ban}}_k(h^L_1))}{k \cdot \dim H^0(X, L^{\otimes k})} 
			=
			\mathscr{E}(P(h^L_0)_*)
			-
			\mathscr{E}(P(h^L_1)_*).
		\end{equation}
	\end{thm}
	\par 
	\begin{proof}[Proof of Proposition \ref{prop_ban_inf_2hl}]
		Let $h^L_0$ be a Hermitian metric on $L$ given by Proposition \ref{lem_asympt_bm}.
		We fix $M > 1$, and consider the metric $h^L_M$ on $L$, defined as follows 
		\begin{equation}\label{eq_hlm_defn}
			h^L_M(x) = \begin{cases}
				h^L_0(x) \cdot M, \quad & x \in E,
				\\
				h^L_0(x), \quad & \text{otherwise}.
			\end{cases}
		\end{equation}
		We choose $M > 0$ so that $h^L_M \geq \exp(4 \delta + 1) \cdot h^L$ over $E$.
		Note that $h^L_M$ is upper-semicontinuous as $E$ is closed and $M > 1$.
		Hence, there is a decreasing sequence of continuous metrics $h^L_i$, $i \in \nat$, which converges (pointwise) towards $h^L_M$.
		\par 
		We shall now establish that for any $\epsilon > 0$, there are $i, k_0 \in \nat$, so that for any $k \geq k_0$, we have
		\begin{equation}\label{eq_vol_hlm}
			d_1 \big({\rm{Ban}}_k(h^L_i)), {\rm{Ban}}_k(h^L_0) \big)
			\leq
			\epsilon k / 2.
		\end{equation}
		Note that by the obvious monotonicity of the volumes and $h^L_M \geq h^L_0$, for any $i \in \nat$, we have
		\begin{equation}
			\log {\rm{vol}} ({\rm{Ban}}_k(h^L_i)) - \log {\rm{vol}} ({\rm{Ban}}_k(h^L_0)) \leq 0.
		\end{equation}
		Hence, by Proposition \ref{prop_d_p_and_vol} and Theorem \ref{thm_balls}, to prove (\ref{eq_vol_hlm}), it suffices to establish that 
		\begin{equation}\label{eq_cont_energy}
			\lim_{i \to \infty}
			\mathscr{E}(P(h^L_i)_*)
			=
			\mathscr{E}(P(h^L_0)_*).
		\end{equation}
		From the continuity properties of the energy functional, cf. \cite[Proposition 3.3]{BermanBouckBalls}, to show (\ref{eq_cont_energy}), it suffices to establish that $P(h^L_i)_*$ decreases towards $P(h^L_0)_*$ outside of a pluripolar subset.
		Note that the decreasing nature of the sequence is immediate since $P(\cdot)$ is monotone.
		Also, by our construction, $h^L_i$ decrease towards $h^L_0$ outside of a pluripolar subset $E$, and so the limit of $P(h^L_i)_*$ coincides with $P(h^L_0)_*$ outside of a pluripolar subset by  \cite[Proposition 2.2.3]{GuedjLuZeriahEnv}, establishing (\ref{eq_vol_hlm}).
		\par
		We now fix $i \in \nat$ big enough so that (\ref{eq_vol_hlm}) holds.
		By (\ref{eq_dp_reform_subsp}) and (\ref{eq_vol_hlm}), we conclude that for any $k \geq k_0$, there is a vector subset $E_k \subset H^0(X, L^{\otimes k})$, verifying (\ref{eq_dim_ek_big}), and so that 
		\begin{equation}\label{eq_bnd_rest_el_ban0_i}
			{\rm{Ban}}_k(h^L_0) \geq \exp(- k) \cdot {\rm{Ban}}_k(h^L_i), \quad \text{when restricted to } E_k.
		\end{equation}
		Now, by the continuity of $h^L_i$ and our choice of the constant $M$, we deduce that there is a neighborhood $U$ of $E$ so that ${\rm{Ban}}_k(h^L_i) \geq {\rm{Ban}}_k(U, \exp((\delta + 1) k) \cdot h^L)$.
		A combination of this, Proposition \ref{lem_asympt_bm} and (\ref{eq_bnd_rest_el_ban0_i}) immediately implies (\ref{eq_ban_inf_2hl}).
	\end{proof}
	\begin{rem}\label{rem_eq_hlm}
		As a byproduct of (\ref{eq_vol_hlm}), we see that $\oplus_{k = 0}^{\infty} {\rm{Ban}}_k(h^L_M) \sim_p \oplus_{k = 0}^{\infty} {\rm{Ban}}_k(h^L_0)$ for any $p \in [1, +\infty[$, $M > 0$. 
		In other words, the asymptotic class of the $\sup$-norm is not changed by the change of the metric along the closed pluripolar subset.
		Naturally, this does not remain true when the pluripolar subset is not closed -- for example if $E$ is a discrete dense set.
	\end{rem}
	We will now discuss the following partial converse to Proposition \ref{lem_asympt_bm}.
	\begin{prop}\label{lem_asympt_bm_pc}
		Assume that the measure $\mu$ has support in a closed pluripolar subset $E$.
		Then there is no continuous metric $h^L_0$ on $L$, so that for any $k \in \nat$, we have
		\begin{equation}\label{eq_asympt_bm_pc}
			{\textrm{Hilb}}_k(h^L, \mu)
			\geq
			{\textrm{Ban}}_k^{\infty}(h^L_0).
		\end{equation}
	\end{prop}
	\begin{proof}
		Assume for the sake of a contradiction that such $h^L_0$ exists.
		We now fix $M > 1$ big enough so that the metric $h^L_M$ defined in (\ref{eq_hlm_defn}) verifies $h^L_M \geq \exp(2) \cdot h^L$ over $E$.
		By Remark \ref{rem_eq_hlm} and (\ref{eq_dp_reform_subsp}), we see that for any $\epsilon > 0$, there is $k_0 \in \nat$, so that for any $k \geq k_0$, there is a non-empty vector subspace $E_k \subset H^0(X, L^{\otimes k})$, verifying 
		\begin{equation}\label{eq_dp_reform_subspaaa}
			{\textrm{Ban}}_k^{\infty}(h^L_0) \geq \exp(-k) \cdot {\textrm{Ban}}_k^{\infty}(h^L_M), \quad \text{when restricted to } E_k.
		\end{equation}
		Note, however, that ${\textrm{Ban}}_k^{\infty}(h^L_M) \geq \exp(2k) \cdot {\textrm{Ban}}_k^{\infty}(E, h^L)$ by our choice of $M$.
		A combination of (\ref{eq_asympt_bm_pc}), (\ref{eq_dp_reform_subspaaa}) and the obvious bound ${\textrm{Ban}}_k^{\infty}(E, h^L)
			\geq 
			{\textrm{Hilb}}_k(h^L, \mu)$ yields a contradiction.
	\end{proof}
	\par 
	We conclude this section by formulating an open question.
	To formulate it, we first note that as the space of probability measures is weakly compact, we can always choose a subsequence $k(i)$, $i \in \nat$, so that $\mu_{k(i)}^{\rm{Berg}, \Delta}$ converges weakly towards a probability measure $\nu$.
	As $\nu$ is necessarily non-pluripolar by Corollary \ref{cor_nomalss}, by a singular version of Yau's theorem \cite{YauThm} due to Guedj-Zeriahi \cite[Theorem 11.18]{GuedjZeriahBook}, there is a unique closed positive $(1, 1)$-current $T$ in $c_1(L)$ so that 
	\begin{equation}\label{eq_calabi}
		T^n = \nu \cdot \int_X c_1(L)^n.
	\end{equation}
	\par 
	Now, any Hermitian norm $H_k$ on $H^0(X, L^{\otimes k})$ yields the positive closed $(1, 1)$-forms $\omega_k(H_k) := c_1(L, h^L) + \frac{\imun \partial \dbar}{2 \pi k} \log B_k(x)$ lying in the class $c_1(L)$; here $B_k(x)$ is the diagonal Bergman kernel associated with $H_k$ and $h^L$. 
	This leads to the following natural question.
	\par 
	\textbf{Question}: does the sequence $\omega_k({\textrm{Hilb}}_{k(i)}(h^L, \mu))$ necessarily converge, as $i \to \infty$? If so, does the limit coincide with $T$?
	\par 
	Note that when $L$ is ample, the forms $\omega_k(H_k)$ can alternatively be defined through the Kodaira embeddings $\iota_k : X \to \mathbb{P}(H^0(X, L^{\otimes k})^*)$ as $\omega_k(H_k) = \frac{1}{k} \iota_k^* \omega(H_k)$, where $\omega(H_k)$ is the Fubini-Study form on $\mathbb{P}(H^0(X, L^{\otimes k})^*)$.
	In particular, if $\mu$ is a volume form, $L$ is ample and $h^L$ is smooth and positive, the answer to the above question is positive by the celebrated theorem of Tian \cite{TianBerg}. 
	When $\mu$ is merely Bernstein-Markov and $L$ is big, the answer is also positive by Theorem \ref{thm_diag}.
	\par 
	Note that the above limit appears in the study of zeros of random holomorphic sections (generalizing random polynomials), and if the answer to the above question is true, then these zeros would equidistribute towards $T$, see \cite[Lemma 3.1]{ShiffZeldRandom}, \cite[\S 5.3]{MaHol} or \cite{BloomLevRandPoly}.
	A closely related result on the zeros of orthogonal polynomials was established by Simon \cite[Theorem 1.5]{SimonWeakConv}, but it is not so clear to the author if the methods of \cite{SimonWeakConv} adapt to the study of zeros of random polynomials.

	\subsection{Localization relative to weight variation}\label{sect_loc_diag}
	The main goal of this section is to establish that variations in the weight of the measure affect the asymptotic value of the diagonal Bergman kernel at a given point only through the local change of the weight at that point, rather than globally, i.e. to establish Theorem \ref{thm_diag_local}.
	Throughout the section, we conserve the notation from Theorem \ref{thm_diag_local}.
	\par 
	For the following proposition, we shall denote by $B^i_k(x, y)$, $i = 1, 2$, $k \in \nat$, the Bergman kernels associated with $\mu_i$.
	The following simple observation is an immediate corollary of (\ref{eq_bk_id}) and (\ref{eq_peak_char1}): for any $x \in X$, we have
	\begin{equation}\label{eq_berg_ident}
		B^1_k(x, x)^2
		\leq
		B^2_k(x, x)
		\cdot
		\int_{y \in X} |B^1_k(x, y)|^2 d \mu_2(y).
	\end{equation}
	
	\begin{proof}[Proof of Theorem \ref{thm_diag_local}.]
		It suffices to show that for any positive continuous function $g: X \to \real$, we have $\int g(x) \cdot d \mu_{k, 1}^{\rm{Berg}, \Delta}(x) - \int g(x) \cdot d \mu_{k, 2}^{\rm{Berg}, \Delta}(x) \to 0$, as $k \to \infty$.
		Fix such a function $g$.
		We will prove that for every $\epsilon_0 > 0$, there exists $k_0 \in \nat$, such that for any $k \geq k_0$, we have
		\begin{equation}\label{eq_diag_loc_desir}
			\int g(x) \cdot d \mu_{k, 1}^{\rm{Berg}, \Delta}(x)
			\leq
			\epsilon_0 + 
			\int g(x) \cdot d \mu_{k, 2}^{\rm{Berg}, \Delta}(x).
		\end{equation}
		By symmetry, this inequality implies the desired convergence.
		\par 
		To establish (\ref{eq_diag_loc_desir}), for any $\epsilon > 0$, we shall partition $X$ into a finite union of (non-intersecting) Borel subsets $E_i$, $i = 1, \ldots, N(\epsilon)$, $N(\epsilon) \in \nat$, such that there are compact subsets $K_i$ and open subsets $U_i$, nested as $E_i \subset K_i \subset U_i$, and such that for every $i = 1, \ldots, N(\epsilon)$, there are $f_i \in \real$ and $g_i \geq 0$, so that for any $x \in U_i$, we have 
		\begin{equation}\label{eq_thm_diag_local1}
			|f(x) - f_i| \leq \epsilon, \qquad |g(x) - g_i| \leq \epsilon.
		\end{equation}
		\par 
		According to (\ref{eq_berg_ident}), we have
		\begin{equation}\label{eq_thm_diag_local2}
			\int_{x \in E_i} \frac{B^1_k(x, x)^2}{B^2_k(x, x)}  d \mu_1(x)
			\leq
			\int_{x \in E_i} \int_{y \in X} |B^1_k(x, y)|^2 d \mu_1(x) d \mu_2(y).
		\end{equation}
		By the Cauchy-Schwartz inequality, just as in (\ref{eq_int_bk_peak3}), we get
		\begin{equation}\label{eq_thm_diag_local3}
			\Big( \int_{x \in E_i} B^1_k(x, x)  d \mu_1(x) \Big)^2 \leq \Big( \int_{x \in E_i} B^2_k(x, x)  d \mu_1(x) \Big) \cdot \Big( \int_{x \in E_i} \frac{B^1_k(x, x)^2}{B^2_k(x, x)}  d \mu_1(x) \Big).
		\end{equation}
		By Theorem \ref{thm_off_diag}, the compactness of $K_i$ and $X \setminus U_i$, we obtain that there is $k(\epsilon) \in \nat$, so that for any $k \geq k(\epsilon)$, $i = 1, \ldots, N(\epsilon)$, we have
		\begin{equation}\label{eq_thm_diag_local4}
			\int_{x \in E_i} \int_{y \in X \setminus U_i} |B^1_k(x, y)|^2 d \mu_1(x) d \mu_2(y)
			\leq
			\frac{\epsilon^2 \cdot n_k \cdot \exp(f_i)}{N(\epsilon)}.
		\end{equation}
		By decomposing the integral on the right-hand side of (\ref{eq_thm_diag_local2}) into two parts: over $E_i \times U_i$ and over $E_i \times X \setminus U_i$, and using the bounds (\ref{eq_thm_diag_local1}) with (\ref{eq_thm_diag_local4}), we obtain that for any $i = 1, \ldots, N(\epsilon)$, $k \geq k(\epsilon)$, the following bound holds
		\begin{multline}\label{eq_thm_diag_local6}
			\int_{x \in E_i} \int_{y \in X} |B^1_k(x, y)|^2 d \mu_1(x) d \mu_2(y)
			\\
			\leq
			\exp(f_i + \epsilon)
			\cdot
			\int_{x \in E_i} \int_{y \in U_i} |B^1_k(x, y)|^2 d \mu_1(x) d \mu_1(y)
			+
			\frac{\epsilon^2 \cdot n_k \cdot \exp(f_i)}{N(\epsilon)}.
		\end{multline}
		Moreover, by enlarging the integral from $E_i \times U_i$ to $E_i \times X$, by (\ref{eq_push_forw}), we obtain
		\begin{equation}\label{eq_thm_diag_local7}
			\int_{x \in E_i} \int_{y \in U_i} |B^1_k(x, y)|^2 d \mu_1(x) d \mu_1(y)
			\leq
			\int_{x \in E_i} B^1_k(x, x) d \mu_1(x).
		\end{equation}
		\par 
		For any $k \in \nat$, we denote by $I_k \subset \{1, 2, \ldots, N(\epsilon) \}$, the set of indices so that for any $i \in I_k$,
		\begin{equation}\label{eq_thm_diag_local5}
			\int_{x \in E_i} B^1_k(x, x)  d \mu_1(x)
			\leq
			\frac{\epsilon \cdot n_k}{N(\epsilon)},
		\end{equation}
		and for any $i \notin I_k$, the inverse inequality is satisfied.
		From (\ref{eq_thm_diag_local6}) and (\ref{eq_thm_diag_local7}), we see that for any $k \geq k(\epsilon)$, $i \notin I_k$, we have
		\begin{equation}\label{eq_thm_diag_local77}
			\int_{x \in E_i} \int_{y \in X} |B^1_k(x, y)|^2 d \mu_1(x) d \mu_2(y)
			\\
			\leq
			\exp(f_i + 2 \epsilon) \cdot \int_{x \in E_i} B^1_k(x, x) d \mu_1(x).
		\end{equation}
		Note also that immediately from (\ref{eq_thm_diag_local1}), for any $k \in \nat$, $i = 1, \ldots, N_k$, we have
		\begin{equation}\label{eq_thm_diag_local8}
			\int_{x \in E_i} B^2_k(x, x)  d \mu_1(x)
			\leq
			\exp(- f_i + \epsilon)
			\cdot
			\int_{x \in E_i} B^2_k(x, x)  d \mu_2(x).
		\end{equation}
		By (\ref{eq_thm_diag_local2}), (\ref{eq_thm_diag_local3}), (\ref{eq_thm_diag_local77}) and (\ref{eq_thm_diag_local8}), we deduce that for any $k \geq k(\epsilon)$, $i \notin I_k$, we have
		\begin{equation}\label{eq_thm_diag_local9}
			\int_{x \in E_i} B^1_k(x, x)  d \mu_1(x) \leq \exp(3 \epsilon) \cdot \int_{x \in E_i} B^2_k(x, x)  d \mu_2(x).
		\end{equation}
		\par 
		We are now finally ready to establish the bound (\ref{eq_diag_loc_desir}).
		By (\ref{eq_thm_diag_local1}) and (\ref{eq_thm_diag_local5}), for any $k \in \nat$,
		\begin{equation}
			\int g(x) \cdot B^1_k(x, x)  d \mu_1(x)
			\leq
			\epsilon \cdot n_k \cdot (1 + \| g \|_{L^{\infty}(X)})
			+
			\sum_{i \notin I_k}
			g_i \cdot
			\int_{x \in E_i} B^1_k(x, x)  d \mu_1(x).
		\end{equation}
		By this and (\ref{eq_thm_diag_local9}), for any $k \geq k(\epsilon)$, we deduce that 
		\begin{multline}\label{eq_thm_diag_local10}
			\int g(x) \cdot B^1_k(x, x)  d \mu_1(x)
			\\
			\leq
			\epsilon \cdot n_k \cdot (1 + \| g \|_{L^{\infty}(X)})
			+
			\exp(3 \epsilon)
			\cdot
			\sum_{i \notin I_k}
			g_i \cdot
			\int_{x \in E_i} B^2_k(x, x)  d \mu_2(x).
		\end{multline}
		Note, however, that by the positivity of $g$ and (\ref{eq_thm_diag_local1}), we conclude
		\begin{equation}\label{eq_thm_diag_local11}
			\sum_{i \notin I_k}
			g_i \cdot
			\int_{x \in E_i} B^2_k(x, x)  d \mu_2(x)
			\leq
			\epsilon \cdot n_k
			+
			\int g(x) \cdot B^2_k(x, x)  d \mu_2(x).
		\end{equation}
		Since $\epsilon$ can be chosen as small as we wish, a combination of (\ref{eq_thm_diag_local10}) and (\ref{eq_thm_diag_local11}) yields  (\ref{eq_diag_loc_desir}).
	\end{proof}

	\section{Applications to the theory of Toeplitz operators and singular spaces}\label{sect_appl}
	The purpose of this section is to present several applications of the localization principle, with a primary focus on Toeplitz operator theory.
	More precisely, this section is organized as follows.
	In Section \ref{sect_mat}, we discuss connections with the theory of orthogonal polynomials and Toeplitz matrices.
Sections \ref{sect_toepl} and \ref{sect_toepl_eq} are devoted to the proofs of Theorems \ref{cor_alg} and \ref{thm_distr}, respectively.
Finally, the last section examines the extension of our results to the setting of singular spaces.

	\subsection{Orthogonal polynomials and Toeplitz matrices}\label{sect_mat}
	The main goal of this section is to illustrate how Theorems \ref{cor_alg} and \ref{thm_distr} specialize to the classical setting of Toeplitz matrices and orthogonal polynomials.
	\par 
	Below, we place ourselves in the setting of $X = \mathbb{P}^1$, $L = \mathscr{O}(1)$, $\mu$ of compact support $K$ in $\comp \subset \mathbb{P}^1$, and $h^L$ as described after (\ref{eq_cd_orth}).
	Then the Bernstein-Markov condition on $\mu$  can be rewritten in the following way: for any $\epsilon > 0$, there is $C > 0$ so that for any polynomial $P$, 
	\begin{equation}\label{eq_bm_clas0}
		\sup_{x \in K} |P(x)|^2
		\leq
		C
		\cdot
		\exp(\epsilon \deg(P))
		\cdot
		\int |P(x)|^2 d \mu(x).
	\end{equation}
	The psh envelope $h^L_K$ can be expressed as $h^L_K = h^L \cdot \exp(-\phi_K)$, where the function $\phi_K$ is given by
	\begin{equation}
		\phi_K(z) := \sup \{ v(z) : v \in \mathcal{L}, v \leq 0 \text{ on } K \},
	\end{equation}
	and $\mathcal{L}$ is the Lelong class of functions, consisting of psh functions on $\comp^n$ so that up to a constant, they are bounded above by $\frac{1}{2} \log (1 + |z|^2)$, $z \in \comp^n$, cf. \cite[Example 1.2]{GuedZeriGeomAnal}.
	\par 
	For the first example, we let $K$ to be the unit circle $\mathbb{S}^1 \subset \comp$, and let $\mu$ be the Lebesgue measure on it.
	Remark that $\mathbb{S}^1$ is non-pluripolar, see \cite{SadullaevReal}, cf. \cite[Exercise 4.39.5]{GuedjZeriahBook}, and the measure $\mu$ is Bernstein-Markov with respect to $h^L$ by \cite[Corollary 1.8]{BerBoucNys}, \cite[Proposition 3.6]{BernsteinMarkovSurvey}.
	\par 
	\begin{sloppypar}
	For any function $f \in \ccal^0(\mathbb{S}^1, \real)$, $f \neq 0$, we consider the Fourier expansion $f(\theta) = \sum_{i = - \infty}^{+ \infty} a_j \exp(\imun j \theta)$, where $\theta \in [0, 2 \pi[$, gives a standard parametrization of $\mathbb{S}^1$. 
	Then $a_i = \overline{a}_{-i}$ and not all $a_i$ vanish.
	Using the identification described after (\ref{eq_cd_orth}), we see that the monomials provide an orthogonal basis of $(H^0(X, L^{\otimes k}), {\textrm{Hilb}}_k(h^L, \mu))$, and the operator $T_k(f)$ writes in this basis as the following \textit{Toeplitz matrix}
	\begin{equation}
		T_k[f]
		:=
		\begin{bmatrix}
		a_0 & a_{-1} & a_{-2} & \cdots & a_{-k} \\
		a_1 & a_0 & a_{-1} & \cdots & a_{-k+1} \\
		a_2 & a_1 & a_0 & \cdots & a_{-k+2} \\
		\vdots & \vdots & \vdots & \ddots & \vdots \\
		a_{k} & a_{k-1} & a_{k-2} & \cdots & a_0
		\end{bmatrix}.
	\end{equation}
	\end{sloppypar}
	\par 
	Note that due to $\mathbb{S}^1$-symmetry, the equilibrium measure associated with $(\mathbb{S}^1, h^L)$ is just the Lebesgue measure on the circle, cf. \cite[Exercise 9.8]{GuedjZeriahBook}.
	Immediately from this and Theorem \ref{thm_distr}, we recover the following classical statement.
	\begin{thm}[{Szegő first limit theorem \cite{SzegoFST}}]
		For any continuous $f : \mathbb{S}^1 \to \real$, $g: \real \to \real$,
		\begin{equation}
			\lim_{k \to \infty} \frac{1}{k + 1} \sum_{\lambda \in {\rm{Spec}} (T_k[f])} g(\lambda)
			=
			\frac{1}{2 \pi}
			\int_0^{2 \pi} g(f(\theta)) d \theta.
		\end{equation}
	\end{thm}
	\begin{rem}
		The statement holds under much laxer assumptions on $f$, see \cite{NikolskiBook} for details.
	\end{rem}
	\par 
	Theorem \ref{cor_alg}, on its turn, recovers the corresponding result from Grenander-Szeg{\"o} \cite[\S 7, 8]{GrenanSzego}.
	\par 
	For another special case, we consider a subset $K := [-1, 1] \subset \real \subset \comp$ and let $\mu$ be the Lebesgue measure on $K$.
	By the same reasons as before, the set $K$ is non-pluripolar, and the measure $\mu$ is Bernstein-Markov with respect to $h^L$.
	The classical calculation due to Lundin \cite{Lundin}, cf. \cite[p. 707]{BedfordTaylorEquil}, shows that the equilibrium measure is given by
	\begin{equation}
		\mu_{\mathrm{eq}}(K, h^L) = \frac{dx|_{[-1, 1]}}{\pi \cdot \sqrt{1 - x^2}}.
	\end{equation}
	\par 
	\begin{sloppypar}
	Now, let us denote by $L_n(z)$, $z \in \comp$, $n \in \nat$, the normalized Legendre polynomials.
	Recall that this means that $L_n(z)$ has degree $n$, and the following orthogonality relation holds
	\begin{equation}\label{eq_orth_poly}
		\int_{-1}^{1} L_n(x) L_m(x) dx = 1 \cdot \delta_{n m},
	\end{equation}
	where $\delta_{n m}$ is the Kronecker symbol.
	Using the identification described after (\ref{eq_cd_orth}), for any $k \in \nat$, $\{ L_n(z), n = 0, 1, \ldots, k \}$ provides an orthonormal basis of $(H^0(X, L^{\otimes k}), {\textrm{Hilb}}_k(h^L, \mu))$.
	\end{sloppypar}
	\par 
	Let us now consider a function $f \in \ccal^0([-1, 1], \real)$ and the associated Toeplitz operators $T_k(f)$.
	In the above basis the operator $T_k(f)$ writes as the following \textit{Toeplitz form} 
	\begin{equation}
		T_k\{ f \}
		:=
		\begin{bmatrix}
		a_{0 0} & a_{0 1} & \cdots & a_{0 k} \\
		a_{1 0} & a_{1 1} & \cdots & a_{1 k} \\
		\vdots & \vdots & \ddots & \vdots \\
		a_{k 0} & a_{k 1} & \cdots & a_{k k}
		\end{bmatrix},
	\end{equation}
	where $a_{i j} := \int_{-1}^{1} f(x) \cdot L_i(x) L_j(x) dx$, $i, j \in \nat$.
	Immediately from this and Theorem \ref{thm_distr}, we recover the following result.
	\begin{thm}[{Grenander-Szeg{\"o} \cite[p. 116]{GrenanSzego} and Nevai \cite[\S 5]{Nevai}}]
		For any continuous $f : [-1, 1] \to \real$, $g: \real \to \real$, we have
		\begin{equation}
			\lim_{k \to \infty} \frac{1}{k + 1} \sum_{\lambda \in {\rm{Spec}} (T_k\{ f \})} g(\lambda)
			=
			\frac{1}{\pi}
			\int_{-1}^{1} \frac{g(f(x)) dx}{\sqrt{1 - x^2}}.
		\end{equation}
	\end{thm}
	\par 
	Theorem \ref{cor_alg}, on its turn, recovers some of the results from Grenander-Szeg{\"o} \cite[\S 8.1]{GrenanSzego}.
	\par 
	To conclude, we recall that for symmetric convex subsets $K \subset \mathbb{R}^n$, an explicit formula for the equilibrium measure associated with $(K, h^L)$ was obtained by Bedford-Taylor \cite{BedfordTaylorEquil}. 
	According to Lundin's formulas for $h^L_K$ from \cite{Lundin}, see also \cite[p. 707]{BedfordTaylorEquil}, the pair $(K, h^L)$ is pluriregular. 
	It then follows from \cite[Proposition 1.13 and Theorem 1.14]{BerBoucNys} that the corresponding equilibrium measures satisfy the Bernstein-Markov property.  
	Thus, the analogues of the above result can be formulated for arbitrary symmetric convex subsets $K \subset \mathbb{R}^n$.

	\subsection{Algebraic and spectral aspects of Toeplitz operators}\label{sect_toepl}
	The main goal of this section is to apply the results of Section \ref{sect_local_main} to establish Theorems \ref{cor_alg} and \ref{thm_distr}.
	We shall do so in a more general framework, and for this, by a slight abuse of notations, we introduce the following definition.
	\begin{defn}\label{defn_toepl_sch}
		A sequence of operators $T_k \in {\enmr{H^0(X, L^{\otimes k})}}$, $k \in \nat$, is called a \textit{Toeplitz operator} if there is $C > 0$, such that for any $k \in \nat$, $\| T_k \| \leq C$, where $\| \cdot \|$ is the operator norm, and there is $f \in \ccal^0(K, \real)$, called the \textit{symbol} of $\{T_k\}_{k = 0}^{+ \infty}$, so that for any $\epsilon > 0$, $p \in [1, +\infty[$, there is $k_0 \in \nat$, such that for every $k \geq k_0$, we have
		\begin{equation}\label{eq_toepl_schatten}
			\big \| T_k -  T_k(f) \big \|_p \leq \epsilon,
		\end{equation}
		where $\| \cdot \|_p$ is the $p$-Schatten norm, defined for an operator $A \in {\enmr{V}}$, of a finitely-dimensional Hermitian vector space $(V, H)$ as $\| A \|_p = (\frac{1}{\dim V} {\rm{Tr}}[|A|^p])^{\frac{1}{p}}$, $|A| := (A A^*)^{\frac{1}{2}}$.
	\end{defn}
	\begin{rem}
		a) The standard definition of Toeplitz operators uses the operator norm rather than the $p$-Schatten norm in (\ref{eq_toepl_schatten}), and the measure $\mu$ is taken to be a volume form, cf. \cite{MaHol}. 
		The variant from Definition \ref{defn_toepl_sch} is a mix of two definitions from \cite{FinSubmToepl}, \cite{FinEigToepl}; see also Grenander-Szeg{\"o} \cite[\S 7.4]{GrenanSzego} for some earlier variants in the context of orthogonal polynomials.
		\par 
		b) 
		Standard estimates, cf. (\ref{eq_schatted_bnds}), imply that it suffices to verify (\ref{eq_toepl_schatten}) for a single $p \in [1, +\infty[$.
	\end{rem}
	Before we proceed, we add a word of caution.
	While in the classical setting when $\mu$ is a volume form, $L$ is ample and $h^L$ is smooth and positive, it is known that Toeplitz operators have unique symbols, in our setting, the situation is very different, as Proposition \ref{prop_symb_well_def} below shows.
	Nevertheless, the following result clearly generalizes Theorem \ref{cor_alg}.
	\begin{thm}\label{cor_prod_toepl}
		Consider two sequences of operators $T_k, T'_k \in {\enmr{H^0(X, L^{\otimes k})}}$, $k \in \nat$, forming Toeplitz operators with symbols $f \in \ccal^0(K, \real)$ and $g \in \ccal^0(K, \real)$.
		Then the sequence of operators $T_k \circ T'_k \in {\enmr{H^0(X, L^{\otimes k})}}$, $k \in \nat$, forms a Toeplitz operator with a symbol $f \cdot g \in \ccal^0(K, \real)$.
	\end{thm}
	We shall use the following result concerning the spectral radius.
	\begin{sloppypar}
	\begin{lem}\label{lem_spec_radius}
		For any $f \in \ccal^0(K, \real)$, the minimal and the maximal eigenvalues $\lambda_{\min}(T_k(f))$, $\lambda_{\max}(T_k(f))$ of $T_k(f)$ satisfy $\lambda_{\min}(T_k(f)) \geq \inf_{x \in K} f(x)$ and $\lambda_{\max}(T_k(f)) \leq \sup_{x \in K} f(x)$.
	\end{lem}
	\begin{proof}
		Replacing $f$ with $-f$, the problem reduces to analyzing $\lambda_{\max}(T_k(f))$.
		The bound then follows immediately from the min-max characterization of the eigenvalues and the bound $\langle T_k(f) s, s \rangle_{{\textrm{Hilb}}_k(h^L, \mu)} \leq \sup_{x \in K} f(x) \cdot \langle s, s \rangle_{{\textrm{Hilb}}_k(h^L, \mu)}$ for any $s \in H^0(X, L^{\otimes k})$, which follows immediately from the definition of the $L^2$-norm.
	\end{proof}
	\end{sloppypar}
	\begin{proof}[Proof of Theorem \ref{cor_alg}]
		We remark that the uniform bound on the operator norm of $T_k(f) \circ T_k(g)$ is an immediate consequence of Lemma \ref{lem_spec_radius}.
		Classical properties of Schatten norms yield -- in the notations of Definition \ref{defn_toepl_sch} -- that for any $T, S \in {\enmr{H^0(X, L^{\otimes k})}}$, $k \in \nat$, we have
		\begin{equation}\label{eq_schatted_bnds}
			\| T \|_p \leq \| T \|_2^{\frac{1}{p}} \cdot  \| T \|^{\frac{p - 1}{p}}, \qquad \| S \circ T \|_p \leq \| S \| \cdot \| T \|_p.
		\end{equation}
		So it suffices to establish that 
		\begin{equation}\label{eq_sk_schw00}
			\lim_{k \to \infty}
			\| T_k(f) \circ T_k(g) - T_k(f \cdot g) \|_2 
			=
			0.
		\end{equation}
		Now, it is immediate to see using the reproducing property $B_k \circ B_k = B_k$ that we can write
		\begin{equation}\label{eq_sk_schw0}
			T_k(f) \circ T_k(g) - T_k(f \cdot g) 
			=
			B_k \circ S_k \circ B_k,
		\end{equation}
		where $B_k$ is the Bergman kernel as in Definition \ref{defn_toepl_sch}, and $S_k \in {\enmr{H^0(X, L^{\otimes k})}}$, is defined as 
		\begin{equation}
			(S_k s)(x) := \int_{y \in X} S_k(x, y) \cdot s(y) \cdot d \mu(y), \quad \text{for any } s \in H^0(X, L^{\otimes k}),
		\end{equation}
		for $S_k(x, y) \in L^{\otimes k}_x \otimes (L^{\otimes k}_y)^*$ given by
		\begin{equation}\label{eq_sk_schw}
			S_k(x, y) = (f(x) g(x) - f(x) g(y)) B_k(x, y).
		\end{equation}
		From (\ref{eq_schatted_bnds}), we obtain
		\begin{equation}\label{eq_sk_schw1}
			\| B_k \circ S_k \circ B_k \|_2 \leq \| S_k \|_2.
		\end{equation}
		We now rely on the fact that the $2$-Schatten norm is the rescaled Hilbert-Schmidt norm, and the latter can be calculated using the $L^2$-norm of Schwartz kernel of the operator, which gives us 
		\begin{equation}\label{eq_sk_schw2}
			\| S_k \|_2^2
			=
			\frac{1}{n_k} \int_{X \times X} |S_k(x, y)|^{2}_{(h^L)^k} \cdot d \mu(x) \cdot d \mu(y).
		\end{equation}
		But from Theorem \ref{thm_off_diag} and (\ref{eq_sk_schw}), we deduce that 
		\begin{equation}
			\lim_{k \to \infty} \frac{1}{n_k} \int_{X \times X} |S_k(x, y)|^{2}_{(h^L)^k} \cdot d \mu(x) \cdot d \mu(y)
			=
			0,
		\end{equation}
		which along with (\ref{eq_sk_schw0}), (\ref{eq_sk_schw1}) and (\ref{eq_sk_schw2}) finishes the proof of (\ref{eq_sk_schw00}).
	\end{proof}	
	
	\begin{proof}[Proof of Theorem \ref{cor_prod_toepl}]
		The bound $\| T_k \circ T'_k \| \leq \| T_k \| \cdot \| T'_k \|$ implies that $\| T_k \circ T'_k \|$ is uniformly bounded.
		By the triangle inequality, we have
		\begin{multline}
			\big\| T_k \circ T'_k - T_k(f \cdot g) \big\|_p
			\leq
			\big\| (T_k - T_k(f)) \circ T'_k  \big\|_p
			\\
			+
			\big\| T_k(f) \circ (T'_k - T_k(g)) \big\|_p
			+
			\big\| T_k(f) \circ T_k(g) - T_k(f \cdot g) \big\|_p,
		\end{multline}
		which finishes the proof of Theorem \ref{cor_prod_toepl} by Theorem \ref{cor_alg} and (\ref{eq_schatted_bnds}).
	\end{proof}	
	
	Let us now derive the following useful statement concerning continuous functional calculus on Toeplitz operators.
	\begin{cor}\label{cor_func_calc}
		For any $h \in \ccal^0(\real)$ and a sequence of self-adjoint operators $\{ T_k , k \in \nat \}$, forming a Toeplitz operator with symbol $f \in \ccal^0(K, \real)$, $\{ h(T_k), k \in \nat \}$, forms a Toeplitz operator with symbol $h(f)$.
	\end{cor}
	\begin{proof}
		The uniform bound on the operator norm of $h(T_k)$ is immediate.
		Now, since $T_k$ has a uniformly bounded spectrum, let's say located inside of a compact interval $I$, the operator $h(T_k)$ depends solely on the restriction of $h$ on $I$.
		By Weierstrass approximation theorem, applied over $I$, we see that it suffices to establish the result for $h$ given by monomials.
		But this follows immediately from Theorem \ref{cor_prod_toepl} and finishes the proof.
		Note also that from Lemma \ref{lem_spec_radius} and the above proof, we see that the analogue of Corollary \ref{cor_func_calc} holds if we replace $T_k$ by $T_k(f)$ and $h$ by a continuous function on $[\min f, \max f]$.
	\end{proof}
	\par 
	From now on, we shall concentrate on the proof of Theorem \ref{thm_distr}.
	In particular, we shall assume that $\mu$ is Bernstein-Markov.
	We will need the following lemma, which -- although a special case of Theorem \ref{thm_distr} -- is actually used in its proof.
	\begin{lem}\label{lem_trace_toepl}
		For any $f \in \ccal^0(K, \real)$, we have $\lim_{k \to \infty} \frac{1}{n_k} {\rm{Tr}}[T_k(f)] = \int f \cdot \mu_{\mathrm{eq}}(K, h^L)$.
	\end{lem}
	\begin{proof}
		Immediately from the fact that trace can be calculated through the integral of the Schwartz kernel, we deduce
		\begin{equation}
			{\rm{Tr}}[T_k(f)]
			=
			\int_{X \times X} f(x) \cdot |B_k(x, y)|^{2}_{(h^L)^k} \cdot d \mu(x) \cdot d \mu(y).
		\end{equation}
		The result now follows from Theorem \ref{thm_diag} and (\ref{eq_push_forw}).
	\end{proof}
	We can now finally comment on the uniqueness of the symbol of a Toeplitz operator.
	For this, we introduce the following subset
	\begin{equation}\label{eq_kprim}
		K' := {\rm{supp}}(\mu_{\mathrm{eq}}(K, h^L)) \subset K.
	\end{equation}

	\begin{prop}\label{prop_symb_well_def}
		Let $f, g \in \ccal^0(K, \real)$.
		Then $\{ T_k(f) , k \in \nat \}$, forms a Toeplitz operator with symbol $g$ if and only if $f = g$ over $K'$.
	\end{prop}
	\begin{proof}
		According to Corollary \ref{cor_func_calc}, the sequence of operators $\{ |T_k(f) - T_k(g)| , k \in \nat \}$, forms a Toeplitz operator with symbol $|f - g|$.
		By Lemma \ref{lem_trace_toepl}, we conclude that
		\begin{equation}\label{eq_symb_wd_2}
			\lim_{k \to \infty} \frac{1}{n_k} {\rm{Tr}}[|T_k(f) - T_k(g)|] = \int |f - g| \cdot \mu_{\mathrm{eq}}(K, h^L).
		\end{equation}
		According to (\ref{eq_schatted_bnds}), $\{ T_k(f) , k \in \nat \}$, forms a Toeplitz operator with symbol $g$ if and only if
		\begin{equation}\label{eq_symb_wd_1}
			\lim_{k \to \infty} \frac{1}{n_k} {\rm{Tr}} \Big[ |T_k(f) - T_k(g)| \Big] = 0.
		\end{equation}
		Indeed, the quantity under the limit sign in (\ref{eq_symb_wd_1}) corresponds precisely to $\| T_k(f) - T_k(g) \|_1$, and by the uniform boundness of $T_k(f) - T_k(g)$, (\ref{eq_symb_wd_1}) implies that $\lim_{k \to \infty}  \| T_k(f) - T_k(g) \|_p = 0$ for any $p \in [1, +\infty[$.
		A comparison between (\ref{eq_symb_wd_2}) and (\ref{eq_symb_wd_1}) shows -- by the continuity of $f$ and $g$ -- that $f = g$ on $K'$ if and only if $\{ T_k(f) , k \in \nat \}$, forms a Toeplitz operator with symbol $g$.
	\end{proof}
	\begin{proof}[Proof of Theorem \ref{thm_distr}]
		Remark first that 
		\begin{equation}
			\sum_{\lambda \in {\rm{Spec}} (T_k(f))} g(\lambda)
			=
			{\rm{Tr}}[g(T_k(f))].
		\end{equation}
		The result now follows immediately from Corollary \ref{cor_func_calc} and Lemma \ref{lem_trace_toepl}.
		Note also that from Lemma \ref{lem_spec_radius}, the analogue of Theorem \ref{thm_distr} holds if we replace $T_k$ by $T_k(f)$ and $h$ by a continuous function on $[\min f, \max f]$.
	\end{proof}
	
	We finish this section with the following proposition concerning the spectral radius of Toeplitz operators.
	\begin{prop}\label{prop_spec_radius}
		In the notations of Lemma \ref{lem_spec_radius}, we have
		\begin{equation}
			\liminf_{k \to \infty} \lambda_{\max}(T_k(f)) \geq \max_{x \in K'} f(x),
			\qquad
			\limsup_{k \to \infty} \lambda_{\min}(T_k(f)) \leq \min_{x \in K'} f(x).
		\end{equation}
	\end{prop}
	\begin{sloppypar}
	\begin{proof}
		Replacing $f$ with $-f$, the problem reduces to analyzing $\lambda_{\max}(T_k(f))$.
		We let $r := \max_{x \in K'} f(x)$, and for a given $\epsilon > 0$, we consider a continuous function $g: \real \to [0, 1]$, so that $g(r) = 1$ and $g(y) = 0$ for $y < r - \epsilon$.
		Then by our assumption we have $\int_X g(f(x)) d \mu_{\mathrm{eq}}(K, h^L)(x) > 0$. 
		Proposition \ref{prop_spec_radius} follows from this and Theorem \ref{thm_distr}.
	\end{proof}
	\end{sloppypar}
	\par 
	We underline that the subsets $K$ and $K'$ are generally different.
	In particular, Lemma \ref{lem_spec_radius} and Proposition \ref{prop_spec_radius} do not determine the asymptotics of $\lambda_{\max}(T_k(f))$ and  $\lambda_{\min}(T_k(f))$ for general measures $\mu$.
	We formulate the corresponding question.
	\par 
	\textbf{Question}: do limits $\lim_{k \to \infty} \lambda_{\max}(T_k(f))$, $\lim_{k \to \infty} \lambda_{\min}(T_k(f))$ exist for measures $\mu$ that do not give full mass to pluripolar subsets? Do they coincide with $\max_{x \in K} f(x)$ and $\min_{x \in K} f(x)$ respectively?
	\par 
	Of course, the above question is closely related to the pointwise localization discussed in \cite{NevaiCond}.
	To explain this, for a given $x \in X$, we introduce the following measure
	\begin{equation}
		\mu_k^x := \frac{1}{B_k(x, x)} |B_k(x, y)|^{2}_{(h^L)^k} \cdot d\mu(y).
	\end{equation}
	Immediately from (\ref{eq_bk_id}), we see that $\mu_k^x$ is a probability measure. 
	Due to the same heuristics as described after (\ref{eq_bk_id}), one should expect that the measures $\mu_k^x$ converge weakly towards the Dirac mass at $x$, as $k \to \infty$, when $x \in {\textrm{supp}}(\mu)$.
	This naive expectation turns out to be false; for counterexamples, we refer the reader to Breuer-Last-Simon \cite[\S3-5]{NevaiCond}.
	Following \cite{NevaiCond}, measures that nevertheless satisfy this condition are said to satisfy the \textit{Nevai condition at $x$}.
	The following question generalizes \cite[Conjecture 1.4]{NevaiCond}, formulated for measures on $\mathbb{R} \subset \mathbb{P}^1$.
	\par 
	\textbf{Question}: for an arbitrary measure $\mu$ which does not give full mass to pluripolar subsets, does Nevai condition holds $\mu$-almost everywhere?
	\par 
	From Lemma \ref{lem_spec_radius}, it will then follow that if the answer to the above question is positive, then the answer to the question before it is positive as well.
	In fact, one can even show that an averaged version in the spirit of Theorem \ref{thm_off_diag} of the above question suffices for the needs of the first question.
	However, even the averaged version seems to be open in full generality.
	
	\subsection{Spectral equidistribution criteria for Toeplitz operators}\label{sect_toepl_eq}
	The main goal of this section is to generalize Theorem \ref{thm_distr} by showing that spectral equidistribution of Toeplitz operators is equivalent to the convergence of (diagonal) Bergman measure.
	We state this result in much greater generality when the measure $\mu$ itself is allowed to vary within a bounded family of measures.
	As a byproduct, we also generalize Theorems \ref{thm_off_diag} and \ref{cor_alg} to this setting.
	\par 
	We consider a sequence of Borel probability measures $\mu_k$, $k \in \nat$, on $X$, which is \textit{bounded} in the following sense: there is a continuous metric $h^L_0$ on $L$ so that for any $k \in \nat$,
	\begin{equation}\label{eq_asympt_bm_unif}
		{\textrm{Hilb}}_k(h^L, \mu_k)
		\geq
		{\textrm{Ban}}_k^{\infty}(h^L_0).
	\end{equation}
	By Proposition \ref{lem_asympt_bm}, for (\ref{eq_asympt_bm_unif}) to hold, it suffices that there is $C > 0$, so that $\mu_k \geq \exp(- C k) \cdot \mu$, for any $k \in \nat$, where $\mu$ is a measure which does not give full mass to pluripolar subsets.
	But we stress out that (\ref{eq_asympt_bm_unif}) is much more general, and it holds even for some measures $\mu_k$ with finite support.
	For instance, by \cite[Proposition 2.10]{BerBoucNys}, (\ref{eq_asympt_bm_unif}) holds when $\mu_k$ is a normalized sum of $\dim H^0(X, L^{\otimes k})$ Dirac masses associated with a Fekete configuration of points.
	\par 
	By an abuse of notation, we denote by $B_k(x, y) \in L^k_x \otimes (L^k_y)^*$ the Bergman kernel associated with ${\textrm{Hilb}}_k(h^L, \mu_k)$, defined as in (\ref{eq_bergm_kern}).
	It is well-defined by (\ref{eq_asympt_bm_unif}).
	We define the probability measures $\mu_{k}^{\rm{Berg}, \Delta}$ and $\mu_{k}^{\rm{Berg}}$ on $X$ and $X \times X$ as in (\ref{eq_mu_begmm}) and (\ref{eq_diag_bm}).
	For any $f \in \ccal^0(K, \real)$, we let $T_k(f)$ be the associated Toeplitz operator, defined for the Hermitian product ${\textrm{Hilb}}_k(h^L, \mu_k)$.
	We can now state the main result of this section.
	\begin{thm}\label{thm_equil_equidist}
	\begin{sloppypar}
		For any subsequence $k(i) \in \nat$, $i \in \nat$, of $\nat$, the following are equivalent:
		\begin{enumerate}[a)]
			\item
			The sequence $\mu_{k(i)}^{\rm{Berg}, \Delta}$ converges weakly, as $i \to \infty$, towards a measure $\nu$.
			\item
			The sequence $\mu_{k(i)}^{\rm{Berg}}$ converges weakly, as $i \to \infty$, towards the measure $\Delta_* \nu$, where $\Delta: X \to X \times X$ is the diagonal embedding.
			\item
			For any $f \in \ccal^0(X, \real)$, the spectral measures $\mu_{k(i)}^{Spec}(f) := \frac{1}{n_{k(i)}} \sum_{\lambda \in {\rm{Spec}} (T_{k(i)}(f))} \delta[\lambda]$ of $T_{k(i)}(f)$ converge weakly, as $i \to \infty$, towards the measure $f_* \nu$.
		\end{enumerate}
	\end{sloppypar}
	\end{thm}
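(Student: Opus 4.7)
The plan is to establish the two equivalences $(a) \Leftrightarrow (b)$ and $(a) \Leftrightarrow (c)$ separately, relying respectively on Theorem \ref{thm_off_diag} and Corollary \ref{cor_alg}.

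For $(a) \Leftrightarrow (b)$, the starting point is the projection formula $(\pi_1)_* \mu_k^{\rm{Berg}} = \mu_k^{\rm{Berg}, \Delta}$, where $\pi_1 : X \times X \to X$ is the first factor projection. This follows from the reproducing identity $\int |B_k(x,y)|_{(h^L)^k}^2 d\mu(y) = B_k(x,x)$, an immediate consequence of the orthonormal expansion in (\ref{eq_bergm_kern}). Thus $(b) \Rightarrow (a)$ follows at once by continuity of $(\pi_1)_*$. For the converse $(a) \Rightarrow (b)$, take a test function $g \in \ccal^0(X \times X)$ and decompose
\begin{equation*}
\int g \, d\mu_{k_i}^{\rm{Berg}} = \int g(x,x) \, d\mu_{k_i}^{\rm{Berg}}(x,y) + \int \bigl( g(x,y) - g(x,x) \bigr) \, d\mu_{k_i}^{\rm{Berg}}(x,y).
\end{equation*}
The projection formula reduces the first term to $\int g(x,x) \, d\mu_{k_i}^{\rm{Berg}, \Delta}(x)$, which converges to $\int g \, d(\Delta_* \nu)$ by $(a)$. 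For the second term, uniform continuity of $g$ furnishes, for any $\epsilon > 0$, a neighborhood $U$ of the diagonal on which $|g(x,y) - g(x,x)| \leq \epsilon$, while on the compact complement $K := (X \times X) \setminus U$ the $\mu_{k_i}^{\rm{Berg}}$-mass tends to zero by Theorem \ref{thm_off_diag}.

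For $(a) \Leftrightarrow (c)$, the crucial auxiliary identity is $\frac{1}{n_k} {\rm{Tr}}[T_k(g)] = \int g \, d\mu_k^{\rm{Berg}, \Delta}$, valid for any $g \in \ccal^0(X)$ and obtained by computing the trace in an orthonormal basis of $H^0(X, L^{\otimes k})$. Since $f$ is real, a direct computation shows that $T_k(f)$ is self-adjoint, with operator norm bounded by $\|f\|_\infty$, so all spectral measures $\mu_k^{Spec}(f)$ live in the fixed compact interval $[-\|f\|_\infty, \|f\|_\infty]$, where weak convergence is equivalent to convergence of moments. I would then apply Corollary \ref{cor_alg} with $p = 1$ inductively in $m$ to obtain $\| T_k(f)^m - T_k(f^m) \|_1 \to 0$, which combined with the trace identity yields
\begin{equation*}
\int y^m \, d\mu_{k_i}^{Spec}(f)(y) = \int f^m \, d\mu_{k_i}^{\rm{Berg}, \Delta} + o(1).
\end{equation*}
Under $(a)$ the right-hand side converges to $\int f^m \, d\nu = \int y^m \, d(f_*\nu)(y)$, proving $(c)$. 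The converse $(c) \Rightarrow (a)$ follows by taking $m = 1$ and letting $f$ range over $\ccal^0(X, \real)$.

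I expect the main technical step to be the iterated application of Corollary \ref{cor_alg}. The telescoping expansion
\begin{equation*}
T_k(f)^m - T_k(f^m) = \sum_{j=0}^{m-1} T_k(f)^{m-1-j} \bigl( T_k(f) T_k(f^j) - T_k(f^{j+1}) \bigr)
\end{equation*}
reduces the problem to a finite sum of products $A \cdot B$ where $B \to 0$ in Schatten $1$-norm by Corollary \ref{cor_alg}; controlling each term via the submultiplicativity $\|AB\|_1 \leq \|A\| \cdot \|B\|_1$ together with the uniform bound $\|T_k(f^j)\| \leq \|f\|_\infty^j$ is routine but must be carried out carefully. Everything else is a combination of standard weak-convergence manipulations with the off-diagonal vanishing already supplied by Theorem \ref{thm_off_diag}.
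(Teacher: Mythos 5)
Your proposal is correct and follows essentially the same route as the paper: the easy implications via the trace identity and the projection formula, and the reverse implications $a)\Rightarrow b)$ via Theorem \ref{thm_off_diag} combined with uniform continuity near the diagonal, and $a)\Rightarrow c)$ via the moment method powered by Corollary \ref{cor_alg}. The only difference is that the paper delegates the hard directions to the arguments of \cite{FinBMMeas}, whereas you write them out in full; the ingredients you use are precisely the ones the paper identifies as the relevant inputs.
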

	The proof of the above result will be based on the following generalization of Theorem \ref{thm_off_diag}.
	
	\par 
	\begin{thm}\label{thm_off_diag_gen}
		For any bounded (in the sense of (\ref{eq_asympt_bm_unif})) sequence of Borel probability measures $\mu_k$, the corresponding measures $\mu_k^{\rm{Berg}}$ do not asymptotically place mass away from the diagonal.
		In other words, for any compact subset $K \subset X \times X$, not intersecting the diagonal, we have (\ref{eq_thm_off_diag}).
	\end{thm}
	\begin{proof}
		We first note that the analogue of Theorem \ref{thm_pp_no_mass} continues to hold: for any closed pluripolar subset $E \subset X$ and any $\epsilon > 0$, there exists an open neighborhood $U$ of $E$, so that for any $k \in \nat$, we have (\ref{eq_pp_no_mass}).
		This is immediate as we have assumed that the bound (\ref{eq_asympt_bm_unif}) is verified, and this was the only input we had from the measure $\mu$ in the proof of Theorem \ref{thm_pp_no_mass}.
		Also, the analogue of Proposition \ref{prop_reduc} immediately holds, and the proof remains unchanged.
		Finally, if for two linearly independent sections $s_1, s_2 \in H^0(X, L^{\otimes k_0})$, there is a compact subset $K$ not intersecting the subvariety $Y := {\textrm{div}}(s_2)$, so that the supports of Borel probability measures $\mu_{k, 0}$ all lie within $K$, then the analogue of Theorem \ref{thm_qual_deloc} continues to hold: there is $C > 0$, so that for any $k \in \nat^*$, we have
		\begin{equation}\label{eq_thm_qual_deloc_gen}
			\iint \big|B_{k, 0}(x, y)\big|^2_{(h^L)^k} \cdot \Big| \frac{s_1(x)}{s_2(x)} - \frac{s_1(y)}{s_2(y)} \Big|^2 \cdot d \mu_{k, 0}(x) d \mu_{k, 0} (y)
			\leq 
			C \cdot (n_k - n_{k - k_0}),
		\end{equation}
		where $|B_{k, 0} (x, y)|^2_{(h^L)^k} d \mu_{k, 0}(x) d \mu_{k, 0} (y)$ is the Bergman measure, defined as in the beginning of Section \ref{sect_supp}.
		From this stage, the proof of Theorem \ref{thm_off_diag_gen} repeats the proof of Theorem \ref{thm_off_diag}.
	\end{proof}
	\par 
	We can now prove that Toeplitz operators associated with ${\textrm{Hilb}}_k(h^L, \mu_k)$ respect the asymptotic algebra property, generalizing Theorem \ref{cor_alg}.
	\begin{cor}\label{cor_alg_gen}
		For any bounded (in the sense of (\ref{eq_asympt_bm_unif})) sequence of Borel probability measures $\mu_k$, the space of Toeplitz operators verifies the asymptotic algebra property.
		In other words, for any $f, g \in \ccal^0(K, \real)$, $p \in [1, +\infty[$, we have (\ref{eq_toepl_comp_as}), where the $p$-Schatten norm is calculated with respect to the Hermitian product ${\textrm{Hilb}}_k(h^L, \mu_k)$.
	\end{cor}
	\begin{proof}
		The proof repeats the proof of Theorem \ref{cor_alg} verbatim.
		One only has to replace Theorem \ref{thm_off_diag} by the more general Theorem \ref{thm_off_diag_gen}.
	\end{proof}
	\begin{proof}[Proof of Theorem \ref{thm_equil_equidist}]
		An immediate verification shows that for any $f: X \to \real$, we have
		\begin{equation}
			\int f(x) \cdot d \mu_{k(i)}^{\rm{Berg}, \Delta}(x) 
			=
			\frac{1}{n_{k(i)}}
			\tr{T_{k(i)}(f)}
			=
			\int x \cdot d \mu_{k(i), f}^{Spec}(x).
		\end{equation}
		The implication $c) \Rightarrow a)$ is then immediate.
		The implication $b) \Rightarrow a)$ follows from (\ref{eq_push_forw}).
		The proof of the reverse implications $a) \Rightarrow c)$ and $a) \Rightarrow b)$ follows the same argument as in the proof of Theorem \ref{thm_distr}: with Theorem \ref{thm_off_diag} replaced by Theorem \ref{thm_off_diag_gen}.
	\end{proof}

	\subsection{Bergman measures on singular spaces}\label{sect_sing}
	We fix from now on a compact complex reduced irreducible analytic space $X$, $\dim_{\comp} X = n$, and a big line bundle $L$ over $X$.
	We fix a continuous Hermitian metric $h^L$ on $L$ and a positive \textit{volume form} $\mu := dV$ on $X$ (which means it is a pull-back of a (strictly) positive $(n, n)$-differential form in each chart).
	For the background on the singular complex spaces, we recommend \cite[\S II.9]{DemCompl}.
	We denote by $B_k(x, y)$ the associated Bergman kernel, defined as in (\ref{eq_bergm_kern}), and $\mu_k^{\rm{Berg}}$ as in (\ref{eq_mu_begmm}).
	\par 
	The asymptotics of $\frac{1}{k} \log B_k(x, x)$ has been investigated in a series of recent papers by Coman-Marinescu \cite{ComMar}, and by Coman-Ma-Marinescu \cite{ComanMaMarinGT}, \cite{ComanMaMarMoish}, see also Berman \cite[Theorem 1.5]{BermanEnvProj} and Bayraktar \cite[Proposition 2.9]{BayraktarEquidistr} for related results in this direction.  
	Our main focus in this section is on the non-logarithmic asymptotics, both on and off the diagonal.
	\par 
	In order to state our results, we define first the equilibrium measure associated with $(X, h^L)$.  
	To this end, let $\pi : \hat{X} \to X$ be a resolution of singularities.  
	According to \cite[Proposition 2.2.43]{LazarBookI}, the pullback line bundle $\pi^* L$ is big and
	\begin{equation}\label{eq_vol_resol}
    	\mathrm{vol}(\pi^* L) = \mathrm{vol}(L).
	\end{equation}
	We then define the equilibrium measure $\mu_{\mathrm{eq}}(X, h^L)$ on $X$ as
	\begin{equation}\label{eq_defn_eq_sing}
    	\mu_{\mathrm{eq}}(X, h^L) := \pi_* \mu_{\mathrm{eq}}(\hat{X}, \pi^* h^L).
	\end{equation}
	Note that, since any two resolutions of singularities can be dominated by a third one, the above definition is independent of the choice of resolution by \cite[Proposition 1.9]{BermanBouckBalls}.
	\par 
	For $k \in \nat$, we denote by $B_k(x, y)$ (resp. $\hat{B}_k(x, y)$) the Bergman kernel associated with $X$, $L$, $h^L$ and $\mu$ (resp. $\hat{X}$, $\pi^* L$, $\pi^* h^L$ and $\pi^* \mu$), defined as in (\ref{eq_bergm_kern}), and by $\mu_k^{\rm{Berg}}$ (resp. $\hat{\mu}_k^{\rm{Berg}}$) the measure on $X \times X$ (resp. $\hat{X} \times \hat{X}$), defined as in (\ref{eq_mu_begmm}).
	We now state the main result of this section.
	\par 
	\begin{thm}\label{thm_sing}
	\begin{sloppypar}
		The sequence of measures $\mu_k^{\rm{Berg}}$ on $X$ converges weakly, as $k \to \infty$, to $\Delta_* \mu_{\mathrm{eq}}(X, h^L)$, where $\Delta: X \to X \times X$ is the diagonal embedding.
		Moreover, the analogues of Theorems \ref{cor_alg} and \ref{thm_distr} hold in this setting.
	\end{sloppypar}
	\end{thm}
	Let us begin by establishing some preliminary results.
	For brevity, we denote 
	\begin{equation}
		\mu_k^B := \frac{1}{n_k} B_k(x, x) \cdot d \mu(x), \qquad \hat{\mu}_k^B := \frac{1}{\hat{n}_k} \hat{B}_k(x, x) \cdot d \hat{\mu}(x),
	\end{equation}
	where $\hat{n}_k := \dim H^0(\hat{X}, \pi^* L^{\otimes k})$ and $n_k := \dim H^0(X, L^{\otimes k})$.
	\begin{lem}\label{lem_comp_res_meas}
		As $k \to \infty$, the sequence of measures $\pi_* \hat{\mu}_k^B - \mu_k^B$ (resp. $(\pi, \pi)_* \hat{\mu}_k^{\rm{Berg}} - \mu_k^{\rm{Berg}}$) on $X$ (resp. on $X \times X$) converges weakly to zero.
	\end{lem}
	\begin{proof}
		According to (\ref{eq_vol_resol}), $n_k \sim \hat{n}_k$, as $k \to \infty$, and so it suffices to establish that the measures
		\begin{equation}\label{eq_meas_aux}
		\begin{aligned}
			&
			\frac{1}{n_k} \Big( \hat{B}_k(x, x) - B_k(\pi(x), \pi(x)) \Big) \cdot d \hat{\mu}(x),
			\\
			&
			\frac{1}{n_k} \Big( |\hat{B}_k(x, y)|^2_{(\pi^* h^L)^k} - |B_k(\pi(x), \pi(y))|^2_{(h^L)^k} \Big) \cdot d \hat{\mu}(x) d \hat{\mu}(y),
		\end{aligned}
		\end{equation}
		converge weakly to zero, as $k \to \infty$.
		To begin with, we concentrate on the first convergence.
		From the definitions, we have 
		\begin{equation}
			\int_{\hat{X}}  \Big( \hat{B}_k(x, x) - B_k(\pi(x), \pi(x)) \Big) \cdot \pi^* d \mu(x)
			=
			\dim H^0(\hat{X}, \pi^* L^{\otimes k})
			-
			\dim H^0(X, L^{\otimes k}).
		\end{equation}
		Since by (\ref{eq_vol_resol}), the right-hand side of the above equality is asymptotically negligible compared to $n_k$, we deduce that the mass of the sequence of measures (\ref{eq_meas_aux}) tends to zero as $k \to \infty$.  
		However, the embedding of $H^0(X, L^{\otimes k})$ into $H^0(\hat{X}, \pi^* L^{\otimes k})$ is isometric with respect to the associated $L^2$-products, so the difference $\hat{B}_k(x, x) - B_k(\pi(x), \pi(x))$ is positive as it can be written as $\sum |s_i(x)|^2_{(h^L)^2}$ where $s_i$ form a basis of the orthogonal complement to the image of $H^0(X, L^{\otimes k})$ in $H^0(\hat{X}, \pi^* L^{\otimes k})$. 
		Hence the measures in (\ref{eq_meas_aux}) are positive, and as their mass tends to zero, the measures themselves tend weakly to zero as $k \to \infty$.
		\par 
		The proof of the second convergence from (\ref{eq_meas_aux}) is similar, and we only highlight the main steps. 
		We write 
		\begin{equation}
			\hat{B}_k(x, y) 
			=
			B_k(\pi(x), \pi(y))
			+
			R_k(x, y),
		\end{equation}
		where $R_k$ is the Schwartz kernel of the projection onto the orthogonal complement of $H^0(X, L^{\otimes k})$ in $H^0(\hat{X}, \pi^* L^{\otimes k})$.
		Then by (\ref{eq_vol_resol}), for any $\epsilon > 0$, there is $k_0 \in \nat$ so that for any $k \geq k_0$, we have 
		\begin{equation}
			\int |R_k(x, y)|^2_{(\pi^* h^L)^k} \cdot d \hat{\mu}(x) d \hat{\mu}(y)
			\leq
			\epsilon \cdot n_k.
		\end{equation}
		By this and Minkowski inequality, we deduce 
		\begin{equation}
			\bigg|
			\sqrt{ \int |\hat{B}_k(x, y)|^2_{(\pi^* h^L)^k} \cdot d \hat{\mu}(x) d \hat{\mu}(y)} 
			- 
			\sqrt{ \int |B_k(x, y)|^2_{(h^L)^k} \cdot d \mu(x) d \mu(y)}
			\bigg|
			\leq
			\sqrt{\epsilon n_k}.
		\end{equation}
		From this, it follows immediately that
		\begin{equation}
			\Big|
			\int |\hat{B}_k(x, y)|^2_{(\pi^* h^L)^k} \cdot d \hat{\mu}(x) d \hat{\mu}(y)
			- 
			\int |B_k(x, y)|^2_{(h^L)^k} \cdot d \mu(x) d \mu(y)
			\Big|
			\leq
			\sqrt{2 \epsilon} \hat{n}_k,
		\end{equation}
		implying the second inequality from (\ref{eq_meas_aux}).
	\end{proof}
	\begin{lem}\label{lem_bm}
		The measure $\pi^* \mu$ on $\hat{X}$ is Bernstein-Markov with respect to $\pi^* h^L$.
	\end{lem}
	In order to establish Lemma \ref{lem_bm}, we need to recall some notions from pluripotential theory.
	We thus fix a complex manifold $Z$ with a big line bundle $F$ endowed with a continuous metric $h^F$.
	We fix a Borel measure $\nu$ with support $Z$ (all the definitions and results below work for measures supported on arbitrary compact subsets, but since we shall only work with fully supported measures, we simplify our presentation accordingly).
	Following \cite{SiciakDetermMeas} and \cite{BerBoucNys}, recall that $\nu$ is called \textit{Bernstein-Markov with respect to psh weights on $(Z, h^F)$} if for any $\epsilon > 0$, there is $C > 0$, such that for any $\psi: X \to [- \infty, +\infty[$, so that $h^F \cdot \exp(- \psi)$ has a psh potential, for any $p \geq 1$, we have
	\begin{equation}\label{bm_psh_weights}
		\sup_Z \big( \exp(p \cdot \psi) \big)
		\leq
		C
		\cdot
		\exp(\epsilon p)
		\cdot
		\int_Z \exp(p \cdot \psi) d \nu.
	\end{equation}
	\par 
	It is immediate that $\nu$ is Bernstein-Markov for $(Z, h^F)$ if it is Bernstein-Markov with respect to psh weights.
	Indeed, by substituting $\psi := \frac{1}{k} \log |s|_{(h^F)^k}$ and $p = 2k$ into (\ref{bm_psh_weights}), one obtains the required bound for (\ref{eq_bm_clas}).
	\par 
	Following \cite{BermanBouckBalls}, we say that $\nu$ is \textit{determining} for $(Z, h^F)$ if for each measurable subset $E \subset Z$, $\nu(E) = 0$, we have $h^F_{Z} = h^F_{Z \setminus E}$, where we used the notation (\ref{defn_env}).
	This definition differs slightly from \cite{SiciakDetermMeas}. 
	For a comparison, we refer to \cite[Proposition 3.6]{FinEigToepl}.
	\par 
	Let us recall the following result from \cite[Theorem 1.14]{BerBoucNys} (see also \cite[Theorems 5.1 and 5.2]{SiciakDetermMeas}).
	\begin{thm}\label{thm_bbwn_determining}
		The following statements are equivalent: 
		\begin{enumerate}[a)]
			\item The measure $\nu$ is determining for $(Z, h^F)$.
			\item The measure $\nu$ is Bernstein-Markov with respect to psh weights for $(Z, h^F)$.
		\end{enumerate}
	\end{thm}
	\begin{rem}\label{rem_equiv_class}
		Note that the definition of a determining measure depends only on the absolute continuity class of the measure. By Theorem \ref{thm_bbwn_determining}, the same holds true for the Bernstein-Markov property with respect to psh weights.
	\end{rem}
	\begin{proof}[Proof of Lemma \ref{lem_bm}]
		We will in fact establish a stronger statement by showing that the measure $\pi^* \mu$ on $\hat{X}$ is Bernstein-Markov with respect to psh weights on $(\hat{X}, \pi^* h^L)$.  
		First, observe that by the mean-value inequality, any positive volume form on $\hat{X}$ is Bernstein-Markov with respect to psh weights for any line bundle endowed with a continuous metric.  
		For a proof, the reader may consult \cite[Lemma 2.2]{BermanBouckBalls}, which treats the classical Bernstein-Markov property; the same argument applies verbatim in the plurisubharmonic setting.  
		Note also that the measure $\pi^* \mu$ lies in the same absolute continuity class as any positive volume form on $\hat{X}$.  
		We conclude by Remark \ref{rem_equiv_class}.
	\end{proof}
	\begin{proof}[Proof of Theorem \ref{thm_sing}]
		Observe that Theorem \ref{thm_distr} is a formal consequence of Theorem \ref{cor_alg}, which itself follows from Theorem \ref{thm_off_diag}. 
		Thus, it is enough to prove the analogue of the latter result, on which we concentrate from now on.
		\par 
		By Theorem \ref{thm_off_diag_bm} and Lemma \ref{lem_bm}, as $k \to \infty$, the sequence of measures $\hat{\mu}_k^{\rm{Berg}}$ converges to $\hat{\Delta}_* \mu_{\mathrm{eq}}(\hat{X}, \pi^* h^L)$.
		We conclude by this, Lemma \ref{lem_comp_res_meas}, (\ref{eq_defn_eq_sing}) and the fact that pushforwards under continuous maps preserve weak convergence.
	\end{proof}

\bibliography{bibliography}

@book{EvansGariepy,
	author = {Evans, L. C. and Gariepy, R. F.},
	date-added = {2026-02-23 21:03:18 +0100},
	date-modified = {2026-02-23 21:03:32 +0100},
	doi = {10.1201/9781003583004},
	edition = {2nd edition},
	fseries = {Textbooks in Mathematics},
	isbn = {978-1-032-94644-3; 978-1-003-58300-4},
	keywords = {28-01,28A75,28A78,26B15,26B20,26B25},
	language = {English},
	publisher = {Boca Raton, FL: CRC Press},
	series = {Textb. Math.},
	title = {Measure theory and fine properties of functions},
	year = {2025},
	zbl = {1569.28001},
	zbmath = {8010281},
	bdsk-url-1 = {https://doi.org/10.1201/9781003583004}}

@article{DenyThmGDelta,
	author = {Deny, J.},
	date-added = {2025-10-01 11:36:49 +0200},
	date-modified = {2025-10-01 11:36:59 +0200},
	fjournal = {Comptes Rendus Hebdomadaires des S{\'e}ances de l'Acad{\'e}mie des Sciences, Paris},
	issn = {0001-4036},
	journal = {C. R. Acad. Sci., Paris},
	language = {French},
	pages = {524--525},
	title = {Sur les infinis d'un potentiel},
	volume = {224},
	year = {1947},
	zbl = {0029.04002},
	zbmath = {3045222}}

@incollection{BermanBulkUniv,
	author = {Berman, R. J.},
	booktitle = {Algebraic and analytic microlocal analysis. AAMA, Evanston, Illinois, USA, May 14--26, 2012 and May 20--24, 2013. Contributions of the workshops},
	date-added = {2025-09-29 05:29:44 +0200},
	date-modified = {2025-09-29 05:29:53 +0200},
	doi = {10.1007/978-3-030-01588-6_5},
	isbn = {978-3-030-01586-2; 978-3-030-01588-6},
	keywords = {32L05,32A25},
	language = {English},
	pages = {341--393},
	publisher = {Cham: Springer},
	title = {Determinantal point processes and fermions on polarized complex manifolds: bulk universality},
	year = {2018},
	zbl = {1420.32015},
	zbmath = {7060657},
	bdsk-url-1 = {https://doi.org/10.1007/978-3-030-01588-6_5}}

@article{FinNarSim,
	author = {Finski, S.},
	date-added = {2025-09-01 09:51:25 +0200},
	date-modified = {2025-09-01 09:51:41 +0200},
	doi = {10.1112/plms.70077},
	fjournal = {Proceedings of the London Mathematical Society. Third Series},
	issn = {0024-6115},
	journal = {Proc. Lond. Math. Soc. (3)},
	keywords = {53C55,46B28,32U05,32Q26},
	language = {English},
	note = {Id/No e70077},
	number = {2},
	pages = {47},
	title = {Submultiplicative norms and filtrations on section rings},
	volume = {131},
	year = {2025},
	zbmath = {8086126},
	bdsk-url-1 = {https://doi.org/10.1112/plms.70077}}

@book{BoutGuillSpecToepl,
	author = {Boutet de Monvel, L. and Guillemin, V.},
	date-added = {2025-08-29 15:59:22 +0200},
	date-modified = {2025-08-29 15:59:49 +0200},
	doi = {10.1515/9781400881444},
	fseries = {Annals of Mathematics Studies},
	keywords = {47B35,35S99,32A35,47A10,42B10,47Gxx},
	language = {English},
	publisher = {Princeton University Press, Princeton, NJ},
	series = {Ann. Math. Stud.},
	title = {The spectral theory of {Toeplitz} operators},
	volume = {99},
	year = {1981},
	zbl = {0469.47021},
	zbmath = {3736434},
	bdsk-url-1 = {https://doi.org/10.1515/9781400881444}}

@article{BloomLevRandPoly,
	author = {Bloom, T. and Levenberg, N.},
	date-added = {2025-08-22 12:51:48 +0200},
	date-modified = {2025-08-22 12:52:03 +0200},
	doi = {10.1007/s11118-014-9435-4},
	fjournal = {Potential Analysis},
	issn = {0926-2601},
	journal = {Potential Anal.},
	keywords = {32U35,60D05},
	language = {English},
	number = {2},
	pages = {311--334},
	title = {Random polynomials and pluripotential-theoretic extremal functions},
	volume = {42},
	year = {2015},
	zbl = {1308.32039},
	zbmath = {6406284},
	bdsk-url-1 = {https://doi.org/10.1007/s11118-014-9435-4}}

@article{SimonWeakConv,
	author = {Simon, B.},
	date-added = {2025-08-21 16:03:48 +0200},
	date-modified = {2025-08-21 16:03:55 +0200},
	doi = {10.1215/00127094-2008-067},
	fjournal = {Duke Mathematical Journal},
	issn = {0012-7094},
	journal = {Duke Math. J.},
	keywords = {33C45,60B10},
	language = {English},
	number = {2},
	pages = {305--330},
	title = {Weak convergence of {CD} kernels and applications},
	url = {authors.library.caltech.edu/13478/},
	volume = {146},
	year = {2009},
	zbl = {1158.33003},
	zbmath = {5501867},
	bdsk-url-1 = {authors.library.caltech.edu/13478/},
	bdsk-url-2 = {https://doi.org/10.1215/00127094-2008-067}}

@article{Bergman,
	author = {Bergman, St.},
	date-added = {2025-08-05 14:18:35 +0200},
	date-modified = {2025-08-05 14:19:50 +0200},
	doi = {10.1007/BF01457987},
	fjournal = {Mathematische Annalen},
	issn = {0025-5831},
	jfm = {48.1236.02},
	journal = {Math. Ann.},
	language = {German},
	pages = {238--271},
	title = {{\"U}ber die {Entwicklung} der harmonischen {Funktionen} der {Ebene} und des {Raumes} nach {Orthogonalfunktionen}.},
	url = {https://eudml.org/doc/158947},
	volume = {86},
	year = {1922},
	zbmath = {2604602},
	bdsk-url-1 = {https://eudml.org/doc/158947},
	bdsk-url-2 = {https://doi.org/10.1007/BF01457987}}

@article{Darboux,
	author = {Darboux, G.},
	date-added = {2025-08-05 14:17:34 +0200},
	date-modified = {2025-08-05 14:22:32 +0200},
	fjournal = {Journal de Math{\'e}matiques Pures et Appliqu{\'e}es. 3. S{\'e}rie},
	jfm = {10.0279.01},
	journal = {Journal de Math{\'e}matiques Pures et Appliqu{\'e}es. 3. S{\'e}rie},
	keywords = {42A16,33C45},
	language = {French},
	pages = {5--56},
	title = {M{\'e}moire sur l'approximation des fonctions de tr{\`e}s-grands nombres, et sur une classe {\'e}tendue de d{\'e}veloppements en s{\'e}rie},
	url = {https://eudml.org/doc/235405},
	volume = {4},
	year = {1878},
	zbmath = {2711124},
	bdsk-url-1 = {https://eudml.org/doc/235405}}

@article{Christoffel,
	author = {Christoffel, E. B.},
	date-added = {2025-08-05 14:16:59 +0200},
	date-modified = {2025-08-05 14:17:06 +0200},
	doi = {10.1515/crll.1858.55.61},
	eram = {055.1450cj},
	fjournal = {Journal f{\"u}r die Reine und Angewandte Mathematik},
	issn = {0075-4102},
	journal = {J. Reine Angew. Math.},
	language = {German},
	pages = {61--82},
	title = {{\"U}ber die {Gau{{\ss}}ische} {Quadratur} und eine {Verallgemeinerung} derselben.},
	url = {https://eudml.org/doc/147722},
	volume = {55},
	year = {1858},
	zbmath = {2750337},
	bdsk-url-1 = {https://eudml.org/doc/147722},
	bdsk-url-2 = {https://doi.org/10.1515/crll.1858.55.61}}

@book{SzegoBookOrt,
	author = {Szeg{\"o}, G.},
	date-added = {2025-07-30 08:36:35 +0200},
	date-modified = {2025-07-30 08:36:53 +0200},
	fseries = {Colloquium Publications. American Mathematical Society},
	issn = {0065-9258},
	keywords = {42C05,33C45},
	language = {English},
	publisher = {Providence, RI: American Mathematical Society (AMS)},
	series = {Colloq. Publ., Am. Math. Soc.},
	title = {Orthogonal polynomials.},
	volume = {23},
	year = {1975},
	zbl = {0305.42011},
	zbmath = {3477793}}

@article{LubinskyUnivers,
	author = {Lubinsky, D. S.},
	date-added = {2025-07-25 14:50:17 +0200},
	date-modified = {2025-07-25 14:50:27 +0200},
	doi = {10.4007/annals.2009.170.915},
	fjournal = {Annals of Mathematics. Second Series},
	issn = {0003-486X},
	journal = {Ann. Math. (2)},
	keywords = {42C05,40A30,41A60},
	language = {English},
	number = {2},
	pages = {915--939},
	title = {A new approach to universality limits involving orthogonal polynomials},
	url = {annals.princeton.edu/annals/2009/170-2/p14.xhtml},
	volume = {170},
	year = {2009},
	zbl = {1176.42022},
	zbmath = {5610435},
	bdsk-url-1 = {annals.princeton.edu/annals/2009/170-2/p14.xhtml},
	bdsk-url-2 = {https://doi.org/10.4007/annals.2009.170.915}}

@article{NevaiCond,
	author = {Breuer, J. and Last, Y. and Simon, B.},
	date-added = {2025-06-30 18:13:19 +0200},
	date-modified = {2025-06-30 18:13:43 +0200},
	doi = {10.1007/s00365-009-9055-1},
	fjournal = {Constructive Approximation},
	issn = {0176-4276},
	journal = {Constr. Approx.},
	keywords = {42C05,39A10,30C10},
	language = {English},
	number = {2},
	pages = {221--254},
	title = {The {Nevai} condition},
	url = {resolver.caltech.edu/CaltechAUTHORS:20100830-143634133},
	volume = {32},
	year = {2010},
	zbl = {1198.42021},
	zbmath = {5789870},
	bdsk-url-1 = {resolver.caltech.edu/CaltechAUTHORS:20100830-143634133},
	bdsk-url-2 = {https://doi.org/10.1007/s00365-009-9055-1}}

@article{BermanOrtega,
	author = {Berman, R. and Ortega-Cerd{\`a}, J.},
	date-added = {2025-06-06 13:13:46 +0200},
	date-modified = {2025-06-06 13:13:56 +0200},
	doi = {10.1353/ajm.2018.0019},
	fjournal = {American Journal of Mathematics},
	issn = {0002-9327},
	journal = {Am. J. Math.},
	keywords = {32U35,32U15,32W20,46E22,14Q15,94A20,32A25},
	language = {English},
	number = {3},
	pages = {789--820},
	title = {Sampling of real multivariate polynomials and pluripotential theory},
	url = {hdl.handle.net/2445/127172},
	volume = {140},
	year = {2018},
	zbl = {1396.32014},
	zbmath = {6924837},
	bdsk-url-1 = {hdl.handle.net/2445/127172},
	bdsk-url-2 = {https://doi.org/10.1353/ajm.2018.0019}}

@article{BayraktarEquidistr,
	author = {Bayraktar, T.},
	date-added = {2025-06-02 08:47:11 +0200},
	date-modified = {2025-06-02 08:47:21 +0200},
	doi = {10.1512/iumj.2016.65.5910},
	fjournal = {Indiana University Mathematics Journal},
	issn = {0022-2518},
	journal = {Indiana Univ. Math. J.},
	keywords = {32A60,32L10,60D05},
	language = {English},
	number = {5},
	pages = {1759--1793},
	title = {Equidistribution of zeros of random holomorphic sections},
	url = {www.iumj.indiana.edu/IUMJ/ABS/2016/5910},
	volume = {65},
	year = {2016},
	zbl = {1357.32007},
	zbmath = {6669080},
	bdsk-url-1 = {www.iumj.indiana.edu/IUMJ/ABS/2016/5910},
	bdsk-url-2 = {https://doi.org/10.1512/iumj.2016.65.5910}}

@article{ZelditchErgodSurvey,
	author = {Zelditch, S.},
	date-added = {2025-06-02 08:40:23 +0200},
	date-modified = {2025-06-02 08:40:33 +0200},
	doi = {10.1007/s00365-017-9397-z},
	fjournal = {Constructive Approximation},
	issn = {0176-4276},
	journal = {Constr. Approx.},
	keywords = {58J65,32A25,60G55},
	language = {English},
	number = {1},
	pages = {89--118},
	title = {Quantum ergodic sequences and equilibrium measures},
	volume = {47},
	year = {2018},
	zbl = {1384.58026},
	zbmath = {6844589},
	bdsk-url-1 = {https://doi.org/10.1007/s00365-017-9397-z}}

@article{BermanSuperToepl,
	author = {Berman, R.},
	date-added = {2025-06-01 11:38:50 +0200},
	date-modified = {2025-06-01 11:38:59 +0200},
	doi = {10.1007/BF02930984},
	fjournal = {The Journal of Geometric Analysis},
	issn = {1050-6926},
	journal = {J. Geom. Anal.},
	keywords = {32A25,32L10,47B35},
	language = {English},
	number = {1},
	pages = {1--22},
	title = {Super {Toeplitz} operators on line bundles},
	volume = {16},
	year = {2006},
	zbl = {1104.32001},
	zbmath = {5068818},
	bdsk-url-1 = {https://doi.org/10.1007/BF02930984}}

@article{ComanMaMarMoish,
	author = {Coman, D. and Ma, X. and Marinescu, G.},
	date-added = {2025-05-31 16:07:03 +0200},
	date-modified = {2025-05-31 16:07:20 +0200},
	doi = {10.59277/RRMPA.2024.433.444},
	fjournal = {Revue Roumaine de Math{\'e}matiques Pures et Appliqu{\'e}es},
	issn = {0035-3965},
	journal = {Rev. Roum. Math. Pures Appl.},
	keywords = {32L10,32A60,32C20,32U05,32U40},
	language = {English},
	number = {3-4},
	pages = {433--444},
	title = {Tian's theorem for {Moishezon} spaces},
	volume = {69},
	year = {2024},
	zbmath = {8042617},
	bdsk-url-1 = {https://doi.org/10.59277/RRMPA.2024.433.444}}

@article{ComanMaMarinGT,
	author = {Coman, D. and Ma, X. and Marinescu, G.},
	date-added = {2025-05-31 16:06:19 +0200},
	date-modified = {2025-05-31 16:06:33 +0200},
	doi = {10.2140/gt.2017.21.923},
	fjournal = {Geometry \& Topology},
	issn = {1465-3060},
	journal = {Geom. Topol.},
	keywords = {32L10,32A60,32C20,32U40,81Q50},
	language = {English},
	number = {2},
	pages = {923--962},
	title = {Equidistribution for sequences of line bundles on normal {K{\"a}hler} spaces},
	volume = {21},
	year = {2017},
	zbl = {1457.32051},
	zbmath = {6701799},
	bdsk-url-1 = {https://doi.org/10.2140/gt.2017.21.923}}

@book{Nevai,
	author = {Nevai, P. G.},
	date-added = {2025-05-31 15:48:27 +0200},
	date-modified = {2025-05-31 15:48:32 +0200},
	doi = {10.1090/memo/0213},
	fseries = {Memoirs of the American Mathematical Society},
	isbn = {978-0-8218-2213-5; 978-1-4704-0448-2},
	issn = {0065-9266},
	keywords = {33C45,42C15,41A10},
	language = {English},
	publisher = {Providence, RI: American Mathematical Society (AMS)},
	series = {Mem. Am. Math. Soc.},
	title = {Orthogonal polynomials},
	volume = {213},
	year = {1979},
	zbl = {0405.33009},
	zbmath = {3629367},
	bdsk-url-1 = {https://doi.org/10.1090/memo/0213}}

@article{Lundin,
	author = {Lundin, M.},
	date-added = {2025-05-30 11:40:45 +0200},
	date-modified = {2025-05-30 11:40:51 +0200},
	doi = {10.1307/mmj/1029003186},
	fjournal = {Michigan Mathematical Journal},
	issn = {0026-2285},
	journal = {Mich. Math. J.},
	keywords = {32U05,32A05,32A30,30D50},
	language = {English},
	pages = {197--201},
	title = {The extremal psh for the complement of convex, symmetric subsets of {{\({\mathbb{R}}^ N\)}}},
	volume = {32},
	year = {1985},
	zbl = {0576.32032},
	zbmath = {3920951},
	bdsk-url-1 = {https://doi.org/10.1307/mmj/1029003186}}

@article{BedfordTaylorEquil,
	author = {Bedford, E. and Taylor, B. A.},
	date-added = {2025-05-30 11:38:53 +0200},
	date-modified = {2025-05-30 11:39:06 +0200},
	doi = {10.2307/2000210},
	fjournal = {Transactions of the American Mathematical Society},
	issn = {0002-9947},
	journal = {Trans. Am. Math. Soc.},
	keywords = {32U05,31C10,32E20,52A20},
	language = {English},
	pages = {705--717},
	title = {The complex equilibrium measure of a symmetric convex set in {{\({\mathbb{R}}^ n\)}}},
	volume = {294},
	year = {1986},
	zbl = {0595.32022},
	zbmath = {3957611},
	bdsk-url-1 = {https://doi.org/10.2307/2000210}}

@book{GrenanSzego,
	author = {Grenander, U. and Szeg{\"o}, G.},
	date-added = {2025-05-29 14:50:47 +0200},
	date-modified = {2025-05-29 14:51:19 +0200},
	howpublished = {New {York}: {Chelsea} {Publishing} {Company}. {IX}, 245 p. {\$} 16.95 (1984).},
	keywords = {47B35,47-02,47B15},
	language = {English},
	publisher = {New York: Chelsea Publishing Company.},
	title = {Toeplitz forms and their applications. 2nd ed},
	year = {1984},
	zbl = {0611.47018},
	zbmath = {3988038}}

@article{BloomLeven2,
	author = {Bloom, T. and Levenberg, N.},
	date-added = {2025-05-29 10:25:21 +0200},
	date-modified = {2025-05-29 10:25:52 +0200},
	doi = {10.1007/s11512-009-0101-9},
	fjournal = {Arkiv f{\"o}r Matematik},
	issn = {0004-2080},
	journal = {Ark. Mat.},
	keywords = {31C10,32U15},
	language = {English},
	number = {1},
	pages = {17--40},
	title = {Transfinite diameter notions in {{\(\mathbb C^N\)}} and integrals of {Vandermonde} determinants},
	volume = {48},
	year = {2010},
	zbl = {1196.31003},
	zbmath = {5691990},
	bdsk-url-1 = {https://doi.org/10.1007/s11512-009-0101-9}}

@article{BloomLeven1,
	author = {Bloom, T. and Levenberg, N.},
	date-added = {2025-05-29 10:24:56 +0200},
	date-modified = {2025-05-29 10:25:02 +0200},
	doi = {10.1007/s11854-008-0052-2},
	fjournal = {Journal d'Analyse Math{\'e}matique},
	issn = {0021-7670},
	journal = {J. Anal. Math.},
	keywords = {28A33,28A25,28A75},
	language = {English},
	pages = {353--371},
	title = {Asymptotics for {Christoffel} functions of planar measures},
	volume = {106},
	year = {2008},
	zbl = {1158.28001},
	zbmath = {5508695},
	bdsk-url-1 = {https://doi.org/10.1007/s11854-008-0052-2}}

@article{BEGZ,
	author = {Boucksom, S. and Eyssidieux, P. and Guedj, V. and Zeriahi, A.},
	date-added = {2025-05-29 10:08:15 +0200},
	date-modified = {2025-05-29 10:08:28 +0200},
	doi = {10.1007/s11511-010-0054-7},
	fjournal = {Acta Mathematica},
	issn = {0001-5962},
	journal = {Acta Math.},
	keywords = {32W20,32Q20,53C55},
	language = {English},
	number = {2},
	pages = {199--262},
	title = {Monge-{Amp{\`e}re} equations in big cohomology classes},
	volume = {205},
	year = {2010},
	zbl = {1213.32025},
	zbmath = {5836560},
	bdsk-url-1 = {https://doi.org/10.1007/s11511-010-0054-7}}

@incollection{ChristBergmanOff,
	author = {Christ, M.},
	booktitle = {Algebraic and analytic microlocal analysis. AAMA, Evanston, Illinois, USA, May 14--26, 2012 and May 20--24, 2013. Contributions of the workshops},
	date-added = {2025-05-28 10:21:31 +0200},
	date-modified = {2025-05-28 10:21:41 +0200},
	doi = {10.1007/978-3-030-01588-6_8},
	isbn = {978-3-030-01586-2; 978-3-030-01588-6},
	keywords = {32L05,32W05,32A25},
	language = {English},
	pages = {437--457},
	publisher = {Cham: Springer},
	title = {Upper bounds for {Bergman} kernels associated to positive line bundles with smooth {Hermitian} metrics},
	year = {2018},
	zbl = {1429.32030},
	zbmath = {7060660},
	bdsk-url-1 = {https://doi.org/10.1007/978-3-030-01588-6_8}}

@article{MarinescuVuSurvey,
	arxiv = {arXiv:2504.15083},
	author = {Marinescu, G. and Vu, D.-V.},
	date-added = {2025-05-28 09:44:52 +0200},
	date-modified = {2025-05-28 09:45:13 +0200},
	howpublished = {Preprint, {arXiv}:2504.15083 [math.{CV}] (2025)},
	title = {{A survey on asymptotic equilibrium distribution of zeros of random holomorphic sections, arXiv:2504.15083}},
	url = {https://arxiv.org/abs/2504.15083},
	year = {2025},
	bdsk-url-1 = {https://arxiv.org/abs/2504.15083}}

@article{MarinescuVu,
	author = {Marinescu, G. and Vu, D.-V.},
	date-added = {2025-05-28 09:43:54 +0200},
	date-modified = {2025-05-28 09:44:05 +0200},
	doi = {10.1515/crelle-2024-0017},
	fjournal = {Journal f{\"u}r die Reine und Angewandte Mathematik},
	issn = {0075-4102},
	journal = {J. Reine Angew. Math.},
	keywords = {32A25,32U35,32V40,60F10},
	language = {English},
	pages = {217--251},
	title = {Bergman kernel functions associated to measures supported on totally real submanifolds},
	volume = {810},
	year = {2024},
	zbmath = {7840488},
	bdsk-url-1 = {https://doi.org/10.1515/crelle-2024-0017}}

@misc{FinEigToepl,
	arxiv = {arXiv:2502.01554},
	author = {Finski, S.},
	date-added = {2025-02-14 14:43:31 +0100},
	date-modified = {2025-02-14 14:46:52 +0100},
	keywords = {32U15,32A25,15B05,53D50,58J50},
	title = {{Small eigenvalues of {Toeplitz} operators, {Lebesgue} envelopes and {Mabuchi} geometry, {arXiv}:2502.01554}},
	url = {https://arxiv.org/abs/2502.01554},
	year = {2025},
	bdsk-url-1 = {https://arxiv.org/abs/2502.01554}}

@article{ShiffZeldRandom,
	author = {Shiffman, B. and Zelditch, S.},
	date-added = {2025-02-14 14:39:16 +0100},
	date-modified = {2025-02-14 14:39:32 +0100},
	doi = {10.1007/s002200050544},
	fjournal = {Communications in Mathematical Physics},
	issn = {0010-3616},
	journal = {Commun. Math. Phys.},
	keywords = {32L81,32L05,81Q50},
	language = {English},
	number = {3},
	pages = {661--683},
	title = {Distribution of zeros of random and quantum chaotic sections of positive line bundles},
	volume = {200},
	year = {1999},
	zbl = {0919.32020},
	zbmath = {1271021},
	bdsk-url-1 = {https://doi.org/10.1007/s002200050544}}

@article{SadullaevReal,
	author = {Sadullaev, A.},
	date-added = {2025-01-31 15:41:38 +0100},
	date-modified = {2025-01-31 15:41:49 +0100},
	fjournal = {Matematicheski{\u{\i}} Sbornik. Novaya Seriya},
	journal = {Mat. Sb., Nov. Ser.},
	keywords = {32U05,32Q99},
	language = {Russian},
	pages = {568--583},
	title = {A boundary uniqueness theorem in {{\(\mathbb{C}^n\)}}},
	url = {https://eudml.org/doc/70889},
	volume = {101},
	year = {1976},
	zbl = {0346.32024},
	zbmath = {3539574},
	bdsk-url-1 = {https://eudml.org/doc/70889}}

@article{BernsteinMarkovSurvey,
	author = {Bloom, T. and Levenberg, N. and Piazzon, F. and Wielonsky, F.},
	date-added = {2025-01-24 18:09:41 +0100},
	date-modified = {2025-01-24 18:10:13 +0100},
	doi = {10.14658/pupj-drna-2015-Special_Issue-8},
	fjournal = {Dolomites Research Notes on Approximation},
	issn = {2035-6803},
	journal = {Dolomites Res. Notes Approx.},
	keywords = {41-02,41A17},
	language = {English},
	number = {2},
	pages = {75--91},
	title = {Bernstein-{Markov}: a survey},
	volume = {8},
	year = {2015},
	zbl = {1370.41001},
	zbmath = {6770689},
	bdsk-url-1 = {https://doi.org/10.14658/pupj-drna-2015-Special_Issue-8}}

@article{GuedjLuZeriahEnv,
	author = {Guedj, V. and Lu, C. H. and Zeriahi, A.},
	date-added = {2025-01-24 17:56:04 +0100},
	date-modified = {2025-01-24 17:56:20 +0100},
	doi = {10.4310/jdg/1571882428},
	fjournal = {Journal of Differential Geometry},
	issn = {0022-040X},
	journal = {J. Differ. Geom.},
	keywords = {32U05,32J27},
	language = {English},
	number = {2},
	pages = {273--313},
	title = {Plurisubharmonic envelopes and supersolutions},
	volume = {113},
	year = {2019},
	zbl = {1435.32041},
	zbmath = {7122209},
	bdsk-url-1 = {https://doi.org/10.4310/jdg/1571882428}}

@article{SiciakDetermMeas,
	author = {Siciak, J.},
	date-added = {2025-01-10 16:17:04 +0100},
	date-modified = {2025-01-10 16:17:15 +0100},
	doi = {10.5802/afst.657},
	fjournal = {Annales de la Facult{\'e} des Sciences de Toulouse. S{\'e}rie V. Math{\'e}matiques},
	issn = {0240-2955},
	journal = {Ann. Fac. Sci. Toulouse, Math. (5)},
	keywords = {31C10},
	language = {English},
	number = {2},
	pages = {193--211},
	title = {Families of polynomials and determining measures},
	url = {https://eudml.org/doc/73204},
	volume = {9},
	year = {1988},
	zbl = {0634.31005},
	zbmath = {4032063},
	bdsk-url-1 = {https://eudml.org/doc/73204},
	bdsk-url-2 = {https://doi.org/10.5802/afst.657}}

@article{BermanBouckBalls,
	author = {Berman, R. and Boucksom, S.},
	date-added = {2025-01-06 12:46:11 +0100},
	date-modified = {2025-01-06 12:46:27 +0100},
	doi = {10.1007/s00222-010-0248-9},
	fjournal = {Inventiones Mathematicae},
	issn = {0020-9910},
	journal = {Invent. Math.},
	keywords = {32J25,32A36,32C05,32J27,32L10,32U15,32W20},
	language = {English},
	number = {2},
	pages = {337--394},
	title = {Growth of balls of holomorphic sections and energy at equilibrium},
	volume = {181},
	year = {2010},
	zbl = {1208.32020},
	zbmath = {5730946},
	bdsk-url-1 = {https://doi.org/10.1007/s00222-010-0248-9}}

@article{SiciakExtremal,
	author = {Siciak, J.},
	date-added = {2025-01-06 12:44:08 +0100},
	date-modified = {2025-01-06 12:44:33 +0100},
	doi = {10.4064/ap-39-1-175-211},
	fjournal = {Annales Polonici Mathematici},
	issn = {0066-2216},
	journal = {Ann. Pol. Math.},
	keywords = {32U05,31C10,30C85,32A30,31B15},
	language = {English},
	pages = {175--211},
	title = {Extremal plurisubharmonic functions in {{\(\mathbb C^n\)}}},
	volume = {39},
	year = {1981},
	zbl = {0477.32018},
	zbmath = {3749381},
	bdsk-url-1 = {https://doi.org/10.4064/ap-39-1-175-211}}

@article{Fujita,
	author = {Fujita, T.},
	date-added = {2024-11-08 14:27:08 +0100},
	date-modified = {2024-11-08 14:27:21 +0100},
	doi = {10.2996/kmj/1138039894},
	fjournal = {Kodai Mathematical Journal},
	issn = {0386-5991},
	journal = {Kodai Math. J.},
	keywords = {14C20,14F05},
	language = {English},
	number = {1},
	pages = {1--3},
	title = {Approximating {Zariski} decomposition of big line bundles},
	volume = {17},
	year = {1994},
	zbl = {0814.14006},
	zbmath = {605442},
	bdsk-url-1 = {https://doi.org/10.2996/kmj/1138039894}}

@article{FinSubmToepl,
	author = {Finski, S.},
	date-added = {2024-10-30 15:23:20 +0100},
	date-modified = {2024-11-12 10:19:04 +0100},
	title = {{Toeplitz operators, submultiplicative filtrations and weighted Bergman kernels, arXiv: 2411.05566}},
	year = {2024}}

@article{DonnellyGap,
	author = {Donnelly, H.},
	date-added = {2024-09-24 15:39:43 +0200},
	date-modified = {2024-09-24 15:43:31 +0200},
	journal = {Mathematische Zeitschrift},
	pages = {31--35},
	publisher = {Springer},
	title = {Spectral theory for tensor products of Hermitian holomorphic line bundles},
	volume = {245},
	year = {2003}}

@book{LazarBookI,
	author = {Lazarsfeld, R.},
	date-added = {2023-12-06 08:03:56 +0100},
	date-modified = {2023-12-06 08:04:04 +0100},
	fseries = {Ergebnisse der Mathematik und ihrer Grenzgebiete. 3. Folge},
	isbn = {3-540-22533-1},
	issn = {0071-1136},
	keywords = {14-02,14C20},
	language = {English},
	publisher = {Berlin: Springer},
	series = {Ergeb. Math. Grenzgeb., 3. Folge},
	title = {Positivity in algebraic geometry. {I}. {Classical} setting: line bundles and linear series},
	volume = {48},
	year = {2004},
	zbl = {1093.14501},
	zbmath = {2134816}}

@book{GuedjZeriahBook,
	author = {Guedj, V. and Zeriahi, A.},
	date-added = {2022-12-12 08:34:37 -0500},
	date-modified = {2022-12-12 08:34:52 -0500},
	doi = {10.4171/167},
	fseries = {EMS Tracts in Mathematics},
	isbn = {978-3-03719-167-5; 978-3-03719-667-0},
	keywords = {32-02,32W20,32Q20,32U15,32U20,32U40,35D30},
	language = {English},
	publisher = {Z{\"u}rich: European Mathematical Society (EMS)},
	series = {EMS Tracts Math.},
	title = {Degenerate complex {Monge}-{Amp{\`e}re} equations},
	volume = {26},
	year = {2017},
	zbl = {1373.32001},
	zbmath = {6675408},
	bdsk-url-1 = {https://doi.org/10.4171/167}}

@article{GuedZeriGeomAnal,
	author = {Guedj, V. and Zeriahi, A.},
	date-added = {2022-11-30 10:31:44 +0100},
	date-modified = {2022-11-30 10:31:59 +0100},
	doi = {10.1007/BF02922247},
	fjournal = {The Journal of Geometric Analysis},
	issn = {1050-6926},
	journal = {J. Geom. Anal.},
	keywords = {32U15,32U05,32Q15,31C10,37F10},
	language = {English},
	number = {4},
	pages = {607--639},
	title = {Intrinsic capacities on compact {K{\"a}hler} manifolds},
	url = {www.mathjournals.org/jgan/2005-015-004/2005-015-004-004.html},
	volume = {15},
	year = {2005},
	zbl = {1087.32020},
	zbmath = {5007868},
	bdsk-url-1 = {www.mathjournals.org/jgan/2005-015-004/2005-015-004-004.html},
	bdsk-url-2 = {https://doi.org/10.1007/BF02922247}}

@article{BermanEnvProj,
	author = {Berman, R.},
	date-added = {2022-10-02 18:16:41 +0200},
	date-modified = {2022-10-02 18:16:56 +0200},
	doi = {10.1353/ajm.0.0077},
	fjournal = {American Journal of Mathematics},
	issn = {0002-9327},
	journal = {Am. J. Math.},
	keywords = {32L10,32A36,32U15,58J37},
	language = {English},
	number = {5},
	pages = {1485--1524},
	title = {Bergman kernels and equilibrium measures for line bundles over projective manifolds},
	url = {muse.jhu.edu/journals/american_journal_of_mathematics/toc/ajm.131.5.html},
	volume = {131},
	year = {2009},
	zbl = {1191.32008},
	zbmath = {5624839},
	bdsk-url-1 = {muse.jhu.edu/journals/american_journal_of_mathematics/toc/ajm.131.5.html},
	bdsk-url-2 = {https://doi.org/10.1353/ajm.0.0077}}

@book{PisierBook,
	author = {Pisier, G.},
	date-added = {2022-08-29 09:47:24 +0200},
	date-modified = {2022-08-29 09:47:36 +0200},
	fseries = {Cambridge Tracts in Mathematics},
	isbn = {0-521-66635-X},
	issn = {0950-6284},
	keywords = {46B20,46-02,52A20,52-02,52C07},
	language = {English},
	publisher = {Cambridge: Cambridge University Press},
	series = {Camb. Tracts Math.},
	title = {The volume of convex bodies and {Banach} space geometry},
	volume = {94},
	year = {1999},
	zbl = {0933.46013},
	zbmath = {1313392}}

@article{BedfordTaylor,
	author = {Bedford, E. and Taylor, B. A.},
	date-added = {2022-06-15 16:44:28 +0200},
	date-modified = {2022-06-15 16:44:40 +0200},
	doi = {10.1007/BF02392348},
	fjournal = {Acta Mathematica},
	issn = {0001-5962},
	journal = {Acta Math.},
	keywords = {32U05,31C15,31C10},
	language = {English},
	pages = {1--40},
	title = {A new capacity for plurisubharmonic functions},
	volume = {149},
	year = {1982},
	zbl = {0547.32012},
	zbmath = {3871767},
	bdsk-url-1 = {https://doi.org/10.1007/BF02392348}}

@article{BerBoucNys,
	author = {Berman, R. and Boucksom, S. and Witt Nystr{\"o}m, D.},
	date-added = {2022-06-10 10:33:44 +0200},
	date-modified = {2022-06-10 10:34:07 +0200},
	doi = {10.1007/s11511-011-0067-x},
	fjournal = {Acta Mathematica},
	issn = {0001-5962},
	journal = {Acta Math.},
	keywords = {32U35,32L05,32U15,32L10,31C15},
	language = {English},
	number = {1},
	pages = {1--27},
	title = {Fekete points and convergence towards equilibrium measures on complex manifolds},
	volume = {207},
	year = {2011},
	zbl = {1241.32030},
	zbmath = {6012918},
	bdsk-url-1 = {https://doi.org/10.1007/s11511-011-0067-x}}

@article{MaMarOffDiag,
	author = {X. {Ma} and G. {Marinescu}},
	date-added = {2022-02-02 17:28:53 +0100},
	date-modified = {2022-02-02 17:28:53 +0100},
	fjournal = {{Mathematische Annalen}},
	issn = {0025-5831; 1432-1807/e},
	journal = {{Math. Ann.}},
	language = {English},
	msc2010 = {32A25 32L05 53C27 53D05 58J37},
	number = {3-4},
	pages = {1327--1347},
	publisher = {Springer, Berlin/Heidelberg},
	title = {{Exponential estimate for the asymptotics of Bergman kernels}},
	volume = {362},
	year = {2015},
	zbl = {1337.32011}}

@book{NikolskiBook,
	author = {Nikola\"{\i} {Nikolski}},
	date-added = {2022-02-02 17:28:53 +0100},
	date-modified = {2022-02-02 17:28:53 +0100},
	fjournal = {{Math\'ematiques en Devenir}},
	isbn = {978-2-916352-47-3/pbk},
	journal = {{Math. Devenir}},
	language = {French},
	msc2010 = {15-01 47-01 15B05 47Axx},
	pages = {xxiii + 417},
	publisher = {{Math\'ematiques en Devenir, Paris: Calvage et Mounet}},
	title = {{Matrices et op\'erateurs de Toeplitz}},
	volume = {116},
	year = {2017},
	zbl = {1436.15001}}

@article{SzegoFST,
	author = {G. {Szeg\"o}},
	date-added = {2022-02-02 17:28:53 +0100},
	date-modified = {2022-02-02 17:28:53 +0100},
	fjournal = {{Mathematische Annalen}},
	issn = {0025-5831; 1432-1807/e},
	journal = {{Math. Ann.}},
	language = {German},
	msc2010 = {42-XX 15B05 15A15},
	pages = {490--503},
	publisher = {Springer, Berlin/Heidelberg},
	title = {{Ein Grenzwertsatz \"uber die Toeplitzschen Determinanten einer reellen positiven Funktion}},
	volume = {76},
	year = {1915},
	zbl = {45.0518.02}}

@article{YauThm,
	author = {S.-T. {Yau}},
	date-added = {2022-02-02 17:28:53 +0100},
	date-modified = {2022-02-02 17:28:53 +0100},
	fjournal = {{Communications on Pure and Applied Mathematics}},
	issn = {0010-3640; 1097-0312/e},
	journal = {{Commun. Pure Appl. Math.}},
	language = {English},
	msc2010 = {53C55 35Q99},
	pages = {339--411},
	publisher = {Wiley, Hoboken, NJ},
	title = {{On the Ricci curvature of a compact K\"ahler manifold and the complex Monge-Amp\`ere equation. I}},
	volume = {31},
	year = {1978},
	zbl = {0369.53059}}

@incollection{Caltin,
	author = {D. {Catlin}},
	booktitle = {{Proceedings of the 40th Taniguchi symposium, Katata, Japan, June 23--28, 1997}},
	date-added = {2022-02-02 17:28:53 +0100},
	date-modified = {2022-02-02 17:28:53 +0100},
	isbn = {0-8176-4067-3/hbk},
	language = {English},
	msc2010 = {32A25 47G10 42A38},
	pages = {1--23},
	publisher = {Boston, MA: Birkh\"auser},
	title = {{The Bergman kernel and a theorem of Tian}},
	year = {1999},
	zbl = {0941.32002}}

@article{TianBerg,
	author = {G. Tian},
	date-added = {2022-02-02 17:28:53 +0100},
	date-modified = {2022-02-02 17:28:53 +0100},
	fjournal = {{Journal of Differential Geometry}},
	issn = {0022-040X; 1945-743X/e},
	journal = {J. Diff. Geom.},
	language = {English},
	msc2010 = {53C55 14C20},
	number = {1},
	pages = {99--130},
	publisher = {International Press of Boston, Somerville, MA},
	title = {{On a set of polarized K\"ahler metrics on algebraic manifolds}},
	volume = {32},
	year = {1990},
	zbl = {0706.53036}}

@article{MaMarBTKah,
	author = {{Ma}, X. and {Marinescu}, G.},
	date-added = {2022-02-02 17:28:53 +0100},
	date-modified = {2022-02-02 17:28:53 +0100},
	fjournal = {{Journal f\"ur die Reine und Angewandte Mathematik}},
	issn = {0075-4102; 1435-5345/e},
	journal = {{J. Reine Angew. Math.}},
	language = {English},
	msc2010 = {47B35 53C55 53D55 32Q15},
	pages = {1--56},
	publisher = {De Gruyter, Berlin},
	title = {{Berezin-Toeplitz quantization on K\"ahler manifolds.}},
	volume = {662},
	year = {2012},
	zbl = {1251.47030}}

@article{MaMarToepl,
	author = {{Ma}, X. and {Marinescu}, G.},
	date-added = {2022-02-02 17:28:53 +0100},
	date-modified = {2022-02-02 17:28:53 +0100},
	fjournal = {{The Journal of Geometric Analysis}},
	issn = {1050-6926; 1559-002X/e},
	journal = {{J. Geom. Anal.}},
	language = {English},
	msc2010 = {81S10 32A25 53D50 47B35 53C15 32Q60 53D05},
	number = {2},
	pages = {565--611},
	publisher = {Springer US, New York, NY; Mathematica Josephina, St. Louis, MO},
	title = {{Toeplitz operators on symplectic manifolds.}},
	volume = {18},
	year = {2008},
	zbl = {1152.81030}}

@article{DaiLiuMa,
	author = {Dai, X. and Liu, K. and Ma, X.},
	date-added = {2022-02-02 17:28:53 +0100},
	date-modified = {2025-10-24 14:04:55 +0800},
	doi = {10.4310/jdg/1143593124},
	fjournal = {Journal of Differential Geometry},
	journal = {J. Diff. Geom.},
	number = {1},
	pages = {1--41},
	publisher = {Lehigh University},
	title = {{On the asymptotic expansion of Bergman kernel}},
	url = {https://doi.org/10.4310/jdg/1143593124},
	volume = {72},
	year = {2006},
	bdsk-url-1 = {https://doi.org/10.4310/jdg/1143593124}}

@article{ZeldBerg,
	author = {Zelditch, S.},
	date-added = {2022-02-02 17:28:53 +0100},
	date-modified = {2023-06-29 17:01:27 +0200},
	doi = {10.1155/S107379289800021X},
	eprint = {https://academic.oup.com/imrn/article-pdf/1998/6/317/2123138/1998-6-317.pdf},
	issn = {1073-7928},
	journal = {Internat. Math. Res. Notices},
	pages = {317-331},
	title = {{Szeg{\"o} kernels and a theorem of Tian}},
	url = {https://doi.org/10.1155/S107379289800021X},
	volume = {6},
	year = {1998},
	bdsk-url-1 = {https://doi.org/10.1155/S107379289800021X}}

@article{BordMeinSchli,
	archiveprefix = {arXiv},
	author = {Bordemann, M. and Meinrenken, E. and Schlichenmaier, M.},
	date-added = {2022-02-02 17:28:53 +0100},
	date-modified = {2022-02-02 17:28:53 +0100},
	doi = {10.1007/BF02099772},
	eprint = {hep-th/9309134},
	journal = {Comm. Math. Phys.},
	pages = {281--296},
	reportnumber = {MANNHEIMER-147, FREIBURG-THEP-93-22},
	title = {{Toeplitz quantization of K{\"a}hler manifolds and $gl(N), N \to \infty$ limits}},
	volume = {165},
	year = {1994},
	bdsk-url-1 = {https://doi.org/10.1007/BF02099772}}

@book{Ransford,
	author = {Ransford, T.},
	collection = {London Mathematical Society Student Texts},
	date-added = {2022-02-02 17:28:53 +0100},
	date-modified = {2022-02-02 17:28:53 +0100},
	doi = {10.1017/CBO9780511623776},
	place = {Cambridge},
	publisher = {Cambridge University Press},
	series = {London Mathematical Society Student Texts},
	title = {Potential Theory in the Complex Plane},
	year = {1995},
	bdsk-url-1 = {https://doi.org/10.1017/CBO9780511623776}}

@article{Bouche,
	author = {Bouche, T.},
	date-added = {2022-02-02 17:28:53 +0100},
	date-modified = {2022-02-02 17:28:53 +0100},
	journal = {{Ann. Sc. Norm. Super. Pisa}},
	language = {fre},
	number = {4},
	pages = {501-523},
	publisher = {Scuola normale superiore},
	title = {{Sur les in{\'e}galit{\'e}s de Morse holomorphes lorsque la courbure du fibr{\'e} en droites est d{\'e}g{\'e}n{\'e}r{\'e}e}},
	url = {http://eudml.org/doc/84111},
	volume = {18},
	year = {1991},
	bdsk-url-1 = {http://eudml.org/doc/84111}}

@article{MarinSaval,
	author = {Marinescu, G. and Savale, N.},
	date-added = {2022-02-02 17:28:53 +0100},
	date-modified = {2024-10-08 09:23:43 +0200},
	journal = {Math. Ann.},
	number = {4},
	pages = {4083--4124},
	title = {{Bochner Laplacian and Bergman kernel expansion of semi-positive line bundles on a Riemann surface}},
	volume = {389},
	year = {2024}}

@book{DemCompl,
	author = {Demailly, J.-P.},
	date-added = {2022-02-02 17:28:53 +0100},
	date-modified = {2022-02-02 17:28:53 +0100},
	file = {:Users/siarheifinski/Books/Library/Demailly{\_}2012{\_}Complex Analytic and Differential Geometry.pdf:pdf},
	pages = {455},
	title = {{Complex Analytic and Differential Geometry}},
	year = {2012}}

@article{ComMar,
	author = {Coman, D. and Marinescu, G.},
	date-added = {2022-02-02 17:28:53 +0100},
	date-modified = {2022-02-02 17:28:53 +0100},
	doi = {10.24033/asens.2250},
	fjournal = {Annales Scientifiques de l'\'Ecole Normale Sup\'erieure. Quatri\`eme S\'erie},
	issn = {0012-9593},
	journal = {Ann. Sci. \'Ecole Norm. Sup. (4)},
	mrclass = {32L05 (32A25 32U40)},
	mrnumber = {3377051},
	mrreviewer = {Adam Gregory Harris},
	number = {3},
	pages = {497--536},
	title = {Equidistribution results for singular metrics on line bundles},
	url = {https://doi.org/10.24033/asens.2250},
	volume = {48},
	year = {2015},
	bdsk-url-1 = {https://doi.org/10.24033/asens.2250}}

@book{MaHol,
	author = {Ma, X. and Marinescu, G.},
	date-added = {2022-02-02 17:28:53 +0100},
	date-modified = {2022-02-02 17:28:53 +0100},
	publisher = {Birkh{\"a}user Verlag Basel},
	series = {Progr. Math.},
	title = {Holomorphic Morse inequalities and Bergman kernels},
	volume = {254},
	year = {2007}}

		\bibliographystyle{abbrv}

\Addresses

\end{document}